\numberwithin{equation}{section}
\newtheorem{theorem}{Theorem}[section]
\newtheorem{corollary}[theorem]{Corollary}
\newtheorem{lemma}[theorem]{Lemma}
\theoremstyle{remark}
\newtheorem{remark}[theorem]{Remark}
\newtheorem{assumption}{Assumption}[section]
\newcommand{\jump}{\mathcal{P}}
\newcommand{\life}{t_+}
\newcommand{\nlife}{t_{-}}
\newcommand{\lifepm}{t_{\pm}}
\newcommand{\lifemp}{t_{\mp}}
\newcommand{\exstate}{E_\Delta}
\newcommand{\gstate}{\widetilde{E}}
\newcommand{\Leb}{\mathrm{Leb}}
\newcommand{\Ker}{\mathrm{Ker}}
\newcommand{\sQ}{\mathfrak{Q}}
\newcommand{\dive}{\operatorname{div}}
\newcommand{\Tm}{\mathrm{T}_{\max}}
\newcommand{\F}{B}
\newcommand{\Lip}{\operatorname{Lip}}
\newcommand{\bound}{\Gamma^{\pm}}
\newcommand{\boundn}{\Gamma^{\mp}}
\newcommand{\pbound}{\Gamma^{+}}
\newcommand{\nbound}{\Gamma^{-}}
\renewcommand{\vartheta}{q}
\renewcommand{\gamma}{\mathrm{Tr}}
\renewcommand{\Phi}{F}
\renewcommand{\beta}{\eta}
\begin{document}

\begin{frontmatter}

\title{Densities for piecewise deterministic Markov processes with boundary\tnoteref{label1}}
\author[a]{Piotr Gwi\.zd\.z}
\ead{piotr.gwizdz@gmail.com}

\author[a,b]{Marta Tyran-Kami\'nska\corref{cor1}}
\cortext[cor1]{Corresponding author}
\ead{mtyran@us.edu.pl}
\address[a]{Institute of Mathematics, University of Silesia, Bankowa 14, 40-007
Katowice, Poland}
\address[b]{Institute of Mathematics, Polish Academy of Sciences, Bankowa 14, 40-007
Katowice, Poland}

\date{\today}
\tnotetext[label1]{This work was partially supported  by the Polish NCN grant  2017/27/B/ST1/00100 and by the grant 346300 for IMPAN from the Simons Foundation and the matching 2015-2019 Polish MNiSW fund.}

\begin{keyword}
substochastic semigroup\sep invariant density\sep perturbation of boundary conditions\sep initial-boundary value problem\sep transport equation\sep  cell cycle model
\MSC[2010] 47D06\sep 60J25\sep 60J35
\end{keyword}

\begin{abstract}
We study the  existence of densities for distributions of piecewise deterministic Markov processes. We also obtain relationships between invariant densities of the continuous time process and that of the process observed at jump times. In our approach we use functional-analytic methods and the theory of linear operator semigroups.
By  imposing general conditions on the characteristics of a given Markov process, we show the existence of a substochastic semigroup describing the evolution of densities for the process and we identify its generator. Our main tool is a new perturbation theorem for substochastic semigroups, where we perturb  both the action of the generator and of its domain, allowing
to treat general transport-type equations with non-local boundary conditions.
A couple of particular examples illustrate our general results.
\end{abstract}

\end{frontmatter}
\section{Introduction}\label{s:intro}

Piecewise deterministic Markov processes (PDMPs) were introduced by Davis \cite{davis84} as stochastic models involving deterministic motions and random jumps.  The sample paths of a PDMP $\{X(t)\}_{t\ge 0}$ depend on three local characteristics, which are a flow $\phi=\{\phi_t\}_{t\in \mathbb{R}}$, a nonnegative jump rate function $\vartheta$, and a stochastic transition kernel $\jump$, specifying the post-jump distribution. Starting from $x$ the process $X(t)$ follows the trajectory $\phi_t (x)$ until the first jump time $\tau_1$.
Two types of jumps are possible. Either the flow $\phi_t(x)$ hits the (active) boundary of the state space $E$ in which case there is a forced jump from the boundary back to the set $E$ or a jump to a point in $E$ occurs at a rate $\vartheta$  depending on the current position of the process.  The value $X(\tau_1)$ of the process at the jump time $\tau_1$ is selected according to the distribution $\jump (\phi_{\tau_1}(x), \cdot)$ and the process restarts afresh.
For general background on PDMPs we refer the reader to \cite{davis93}.
A variety of applications has generated a renewed interest in PDMPs, see
\cite{benaim12,hairer15,liliu17,cloez18,palmowski13,rudnickityran17}
and the references therein.

Let the state space be a $\sigma$-finite measure space $(E,\mathcal{E},m)$.
Suppose that the distribution of $X(0)$ is absolutely continuous with respect to the measure $m$ with density $f$. Our main objectives are to find conditions that ensure that
the distribution of $X(t)$ is absolutely continuous with respect to $m$ for all $t>0$, and characterize an evolution equation for its density.
We use the theory of substochastic semigroups on $L^1$ spaces, as in the case of PDMPs with empty active boundary in \cite{tyran09,rudnickityran17}.  Recall that a family of linear operators  on $L^1=L^1(E,m)$ is called  a \emph{substochastic semigroup} if it is a $C_0$-semigroup of positive contractions on $L^1$, see \cite{banasiakarlotti06,rudnickityran17}.

The aim of the present paper is to build a general theory of substochastic semigroups describing the evolution of densities for piecewise deterministic Markov processes. Our approach treats in a unified way a wide class of PDMPs as described in Sections~\ref{s:PDMP} and~ \ref{s:exi}.
We introduce  assumptions  on the flow $\phi$, the jump rate function
$\vartheta$ and the jump distribution $\mathcal{P}$ (Assumptions~\ref{a:nons}--\ref{a:jump}) that allow us to show that a given process
with such characteristics induces a substochastic semigroup on the space~$L^1$ (see equation \eqref{d:indss} and Theorem~\ref{t:mPg}). To identify the generator  of this semigroup  we need to rewrite the action of the process in the space $L^1$ (see Section \ref{s:gen}). We do not assume in advance that the process is nonexplosive, but if that is the case then automatically the semigroup will be \emph{stochastic} (\cite{almcmbk94}), i.e. preserving the norm of nonnegative elements from $L^1$. Although stability and ergodicity of PDMPs are developed in great generality in \cite{costadufour08},
the general  problem of existence of absolutely continuous invariant measures has not been treated at all except for specific examples, see \cite{palmowski13} for a recent account of different models where the existence is known. If we know already that the process induces a substochastic semigroup then we can use the methods presented in \cite{pichorrudnicki16,rudnickityran17} to get existence  of invariant densities.
To complete our general approach we also study in Section~\ref{s:exiinv} relationships between invariant densities of the continuous time process and of the process observed at jump times; our results correspond to the results from \cite{costa90, davis93},  but we do not  assume that the process is non-explosive and we look for absolutely continuous invariant measures.

Section~\ref{s:pert} contains our new abstract results about substochastic semigroups. Our main tool  is a new perturbation result for substochastic semigroups   presented in Section~\ref{s:ibp}. We show in Theorem~\ref{t:gmain} that given the generator of a substochastic semigroup defined on a domain containing a zero-boundary condition we can perturb both the action of the generator and its domain to obtain a substochastic semigroup generated by an extension of the perturbed operator. Our generation result is of Kato-type \cite{kato54,voigt87} allowing also perturbation of boundary conditions as in Greiner~\cite{greiner}, but with unbounded positive operators.   In Section~\ref{s:charg} we also provide sufficient conditions
for the perturbed operator to be the generator, as well as for the perturbed semigroup to be stochastic. In Section~\ref{s:dens} we study
relationships between invariant densities of the perturbed semigroup and invariant densities of a positive contraction operator that will correspond  to the process observed at jump times. 

The proofs of our results from Section~\ref{s:main} are given in Section~\ref{s:proofs}.
First, we show in Section~\ref{s:deter} that Theorem~\ref{t:gmain} can be applied in the functional setting described in Section \ref{s:gen}. Next, in Section~\ref{s:proofm1}, we prove that the constructed substochastic semigroup actually corresponds to the given Markov process.   Section~\ref{s:inv} contains proofs of results from Section~\ref{s:exiinv}.
In Section \ref{s:examples} applications of our results are presented.  The general setting of Davis \cite{davis84,davis93} is treated in Section~\ref{s:ss}.   As a class of particular examples we treat  kinetic equations with conservative boundary conditions in Section \ref{s:kinetic}  providing probabilistic interpretation of these equations. Finally, Section \ref{s:cell} contains an application to a two-phase cell cycle model \cite{rudnickityran15}. Some auxiliary results concerning substochastic semigroups induced by flows are given in \ref{s:appen}.

\section{Main results}\label{s:main}
Let us now specify our general setting and state our main results.

\subsection{Preliminaries}\label{s:PDMP}

We consider a separable metric space $\gstate$ and a \emph{flow} $\phi=\{\phi_t\}_{t\ge 0}$ on $\gstate$,  i.e. a continuous  mapping $\phi\colon \mathbb{R}\times \gstate \to  \gstate$, $(t,x)\mapsto \phi_t(x)$, such that
\begin{equation}\label{e:ds}
\phi_0(x)=x,\quad \phi_s(\phi_t(x))=\phi_{t+s}(x)
\end{equation}
for all $t,s\in \mathbb{R}$ and all $x\in \gstate$.
Let $E^0\subset \gstate$ be a Borel set. We introduce the \emph{outgoing} boundary $\pbound$  and the \emph{incoming} boundary $\nbound$ which are points through which the flow can leave the set $E^0$  and enter the set $E^0$, respectively, given by
\begin{equation}\label{d:activep}
\pbound=\{z\in \overline{E}^0\setminus E^0: z=\phi_{ t}( x)  \text{ for some }x\in E^0, t>0, \text{ and } \phi_{ s}(x)\in E^0, s\in [0,t)\}
\end{equation}
and
\begin{equation}\label{d:activen}
\nbound=\{z\in \overline{E}^0\setminus E^0: z=\phi_{ -t}( x)  \text{ for some }x\in E^0, t>0, \text{ and } \phi_{- s}(x)\in E^0, s\in [0,t)\}.
\end{equation}
We define the \emph{hitting time} of the boundaries $\bound$ by
\begin{equation}\label{d:exit}
\life(x)=\inf\{t>0:\phi_{ t}(x)\in \pbound \}\quad \text{and}\quad \nlife(x)=\inf\{t>0:\phi_{-t}(x)\in \nbound \},\quad x\in E^0,
\end{equation}
with the convention that $\inf\emptyset=\infty$. We set $\lifepm(x)=0$ for $x\in \bound $ and we extend formula \eqref{d:exit} to  points from the boundaries $\boundn$.

The state space of a PDMP $X=\{X(t)\}_{t\ge 0}$ is taken to be  the set $E=E^0\cup \nbound\setminus (\nbound\cap \pbound)$. We consider $E$ with its Borel $\sigma$-algebra $\mathcal{E}=\mathcal{B}(E)$.
We assume that there is a \emph{jump rate function} $\vartheta\colon E\to \mathbb{R}_+$ which is a measurable function such that for each  $x\in E$
the function $r\mapsto \vartheta(\phi_r(x))$ is integrable on $[0,\varepsilon(x))$ for some $\varepsilon(x)>0$.  We consider also a \emph{jump distribution} $\jump\colon (E\cup \pbound )\times \mathcal{B}(E)\to[0,1]$ which is a transition probability, i.e.
    for each set $\F\in \mathcal{B}(E)$ the function $x\mapsto \jump(x,\F)$ is measurable and for each $x\in E\cup \pbound $ the function $\F\mapsto \jump(x,\F)$ is a probability measure.
We call  the triplet $(\phi,\vartheta,\jump )$ the \emph{characteristics of the process}.

We briefly recall from \cite{davis84,rudnickityran17} the construction of the PDMP with characteristics $(\phi,\vartheta,\jump )$.
For each $x\in E$ we define
\begin{equation}\label{d:Phi}
\Phi_x(t)=\mathbf{1}_{[0,\life(x))}(t)\exp\left\{-\int_0^t \vartheta(\phi_r(x))dr\right\},  \quad t\ge 0.
\end{equation}
Note that the function  $t\mapsto 1-\Phi_x(t)$ is the distribution function of a non-negative finite random variable, provided that $\Phi_x(\infty^{-}):=\lim_{t\to \infty}\Phi_x(t)= 0$. Otherwise, we extend $\Phi_x$ to $[0,\infty]$ by setting $\Phi_x(\infty)=1-\Phi_x(\infty^{-})$.
We also extend the state space $E$ to $\exstate=E\cup \{\Delta\}$ where $\Delta$ is a fixed state  outside $E$ representing a 'dead' state for the process and being an isolated point.
For each $x\in E$,
let $\jump(x,\{\Delta\})=0$ and $\phi_t(x)=\Delta$ if $t=\infty$. We also set $\phi_t(\Delta)=\Delta$ for all $t\ge 0$, $\jump(\Delta,\{\Delta\})=1$, and $\Phi_\Delta(t)=1$ for all $t\ge 0$.

Let $\tau_0=\sigma_0=0$ and let $X(0)=X_0$ be an $\exstate$-valued random variable on a probability space $(\Omega,\mathcal{F},\mathbb{P})$.
For each $n\ge 1$ we can choose a $[0,\infty]$-valued random variable $\sigma_n$
satisfying
\[
\mathbb{P}(\sigma_n> t|X_{n-1}=x)=\Phi_{x}(t), \quad t\ge 0.
\]
We define the $n$th \emph{jump time} by
\[
\tau_n=\tau_{n-1}+\sigma_n
\]
and we set
\begin{equation*}
X(t)=\left\{
  \begin{array}{ll}
  \phi_{t-\tau_{n-1}}(X_{n-1}) & \text{ for } \tau_{n-1}\le t<\tau_{n},\\
X_n &\text{ for } t=\tau_n,
\end{array}\right.
\end{equation*}
where the $n$th \emph{post-jump location} $X_n$ is a  random variable such that
\[
\mathbb{P}(X_n\in \F|X(\tau_{n}^{-})=x)=\jump (x,\F),\quad x\in \exstate \cup \pbound ,
\]
and $X(\tau_{n}^{-})=\lim_{t\uparrow \tau_n}X(t)$. Thus, the trajectory of the process
is defined for all $t<\tau_\infty:=\lim_{n\to\infty}\tau_n$ and $\tau_\infty$ is called the \emph{explosion time}. To define the
process for all times, we set $ X(t) =\Delta$ for $t\ge \tau_\infty$.   The process $X=\{X(t)\}_{t\ge 0}$ is  called the \emph{minimal} PDMP corresponding to $(\phi,\vartheta,\jump)$. It has right continuous sample paths, by
construction, and it is a strong Markov process. The process $X$ is said to be  \emph{non-explosive} if $\mathbb{P}_x(\tau_\infty=\infty)=1$ for  every $x\in E$.

We denote by $\mathbb{P}_x$  the distribution of the process $\{X(t)\}_{t\ge 0}$ starting at $X(0)=x$ and by $\mathbb{E}_x$ the expectation operator with respect to~$\mathbb{P}_x$. The probability transition function of the process $X$ is given by
\[
P(t,x,\F)=\mathbb{P}_x(X(t)\in \F)=\mathbb{P}_x(X(t)\in \F,t<\tau_\infty)+\mathbb{P}_x(X(t)\in \F,t\ge \tau_\infty),
\]
where $\tau_\infty$ is the explosion time.
Thus, we have $P(t,x,\F )=\mathbb{P}_x(X(t)\in \F ,t<\tau_\infty)$ for all $x\in E$ and $\F \in \mathcal{B}(E)$.
Given a $\sigma$-finite measure $m$ on the measurable space $(E,\mathcal{B}(E))$ we denote by $L^1(E,m)$ the space of integrable functions on $(E,\mathcal{B}(E),m)$.
We say that the minimal process $X=\{X(t)\}_{t\ge 0}$ \emph{induces a substochastic semigroup} $\{P(t)\}_{t\ge 0}$ on $L^1(E,m)$ if
\begin{equation}\label{d:indss}
\int_{\F }P(t)f(x)m(dx)=\int_{E}\mathbb{P}_x(X(t)\in \F ,t<\tau_{\infty})f(x)m(dx)
\end{equation}
for all $f\in L^1(E,m)$, $\F \in \mathcal{B}(E)$, $t>0$.
Suppose that the process induces a substochastic semigroup.
Then if the distribution of $X(0)$ is absolutely continuous with respect to $m$ with a Radon-Nikodym derivative $f$, called the density of  $X(0)$,  then the distribution of $X(t)$ in $E$ is absolutely continuous with respect to $m$ and its Radon-Nikodym derivative is $P(t)f$.
Since $X(t)\in E$ for $t<\tau_{\infty}$,  it follows from \eqref{d:indss} that
\[
\int_{E }P(t)f(x)m(dx)=\int_{E}\mathbb{P}_x(t<\tau_{\infty})f(x)m(dx)
\]
for all  $f\in L^1(E,m)$, $t>0$. Thus we see that $\|P(t)f\|=\|f\|$ for $f\ge 0$, $t>0$, if and only if
\[
\int_{E}(1-\mathbb{P}_x(t<\tau_{\infty}))f(x)m(dx)=0.
\]
This implies that the induced semigroup is stochastic if and only if the minimal process is $m$-a.e. non-explosive, i.e. $\mathbb{P}_x(\tau_\infty=\infty)=1$ for $m$ almost every $x\in E$.  Hence, if the process induces a stochastic semigroup and if $f$ is the density  of $X(0)$, then $P(t)f$ is the density of $X(t)$, by \eqref{d:indss}.

We conclude this section by recalling some notions from the theory of operators and semigroups on $L^1$ spaces for readers convenience. Let $(E,\mathcal{E},m)$ be a $\sigma$-finite
measure space and $L^1=L^1(E,m)$. A linear operator $P\colon L^1\to L^1$ is called \emph{substochastic} (\emph{stochastic}) if $P$ is a
positive contraction, i.e., $Pf\ge 0$  and $\|Pf\|\le \|f\|$ ($\|Pf\|= \|f\|$) for all nonnegative $f\in L^1$.
A family of substochastic (stochastic) operators $\{P(t)\}_{t\ge 0}$ on $L^1$  which is a
$C_0$-\emph{semigroup}, i.e.,
\begin{enumerate}[\upshape (1)]
\item $P(0)=I$ (the identity operator) and $P(t+s)=P(t)P(s)$ for every $s,t\ge 0$,
\item for each $f\in L^1$ the mapping $t\mapsto P(t)f$ is continuous,
\end{enumerate}
is called a \emph{substochastic (stochastic) semigroup}. The infinitesimal \emph{generator} of
a substochastic semigroup $\{P(t)\}_{t\ge0}$ is by definition the operator $G$ with domain $\mathcal{D}(G)\subset L^1$ defined as
\[
\begin{split}
\mathcal{D}(G)&=\{f\in L^1: \lim_{t\downarrow 0}\frac{1}{t}(P(t)f-f) \text{ exists} \},\\
Gf&=\lim_{t\downarrow 0}\frac{1}{t}(S(t)f-f),\quad f\in \mathcal{D}(G).
\end{split}
\]
A nonnegative $f_*$ with norm $1$ is said to be
an \emph{invariant density} for the semigroup $\{P(t)\}_{t\ge 0}$ if for each $t>0$ it is invariant for the operator $P(t)$, i.e. $P(t)f_*=f_*$.

Given a linear operator $(G,\mathcal{D}(G))$ on $L^1$
we recall that if for some real $\lambda$ the operator
$\lambda-G:=\lambda I-G$ is one-to-one, onto, and $(\lambda -G)^{-1}$ is a bounded linear operator, then $\lambda$ is said to belong to the \emph{resolvent set} $\rho(G)$ and $R(\lambda,G):=(\lambda  -G)^{-1}$ is called the \emph{resolvent
of $G$ at $\lambda$}. Following~\cite{arendt87} the operator $G$ is called \emph{resolvent positive} if there exists $\omega\in \mathbb{R}$ such that $(\omega,\infty)\subseteq \rho(G)$ and the operator $R(\lambda,G)$ is  positive  for all $\lambda>\omega$.
In particular, generators of substochastic semigroups are resolvent positive and the Hille-Yosida theorem implies the following result (see e.g. \cite[Theorem 4.4]{rudnickityran17}): A linear operator  $(G,\mathcal{D}(G))$ is the generator of a substochastic semigroup on $L^1 $ if and only if
$\mathcal{D}(G)$ is dense in $L^1$, the operator $G$ is resolvent positive, and
\begin{equation}\label{e:subsem}
\int_E Gf \,dm \le 0\quad \text{for all }f\in \mathcal{D}(G), f\ge 0.
\end{equation}
Moreover, equality holds in \eqref{e:subsem} if and only if  $(G,\mathcal{D}(G))$  generates a stochastic semigroup.

We provide sufficient conditions for the existence of a substochastic semigroup on $L^1(E,m)$ induced by the given PDMP in Section~\ref{s:exi} and we identify its infinitesimal generator in Section~\ref{s:gen}, where we are also interested in whether the induced semigroup is stochastic.

\subsection{Existence of induced substochastic semigroups}\label{s:exi}

In this section we impose general assumptions on the characteristics $(\phi,\vartheta,\jump )$ of the minimal process $X=\{X(t)\}_{t\ge 0}$ with values in $E$ as described in Section~\ref{s:PDMP} so that $X$ induces a substochastic semigroup.

We start with the properties of the flow $\phi=\{\phi_t\}_{t\in \mathbb{R}}$. We will require that the flow itself induces a stochastic semigroup by assuming that we can choose a measure $m$ on $(\gstate,\mathcal{B}(\gstate))$ in such a way that if the distribution of $X_0$ is absolutely continuous with respect to $m$, then the distribution of $\phi_t(X_0)$ is absolutely continuous with respect to $m$ for all $t$. Thus, we impose the following general assumption on the flow.

 \begin{assumption}\label{a:nons}
There exists a \emph{measurable cocycle} $\{J_t\}_{t\in \mathbb{R}}$ of $\phi$ on $\gstate$, i.e. a family of Borel measurable nonnegative functions  satisfying the following conditions
\begin{equation}\label{d:jacob}
J_0(x)=1,\quad J_{t+s}(x)=J_{t}(\phi_{s}(x))J_{s}(x),\quad s,t\in \mathbb{R}, x\in \gstate,
\end{equation}
and there exists a $\sigma$-finite  Radon measure $m$ on the Borel $\sigma$-algebra $\mathcal{B}(\gstate)$ with $m(\partial E)=0$ such that
\begin{equation}\label{d:RND}
(m\circ \phi_t^{-1})(\F )=m(\phi_t^{-1}(\F ))=\int_{\F}  J_{-t}(x)m(dx),\quad t\in \mathbb{R}, \F \in \mathcal{B}(\gstate).
\end{equation}
\end{assumption}

\begin{remark}
Condition \eqref{d:RND} implies that for each $t$ the transformation $\phi_t\colon \gstate\to \gstate$ is \emph{non-singular with respect to the measure $m$} (\cite{almcmbk94}), i.e. $m\circ \phi_t^{-1}$ is absolutely continuous with respect to $m$. Then $J_{-t}$ is the Radon-Nikodym derivative $\frac{d m\circ \phi_t^{-1}}{dm}$. Note also that condition \eqref{d:RND} together with \eqref{e:ds} implies that \eqref{d:jacob} holds for $m$-a.e. $x$. We assume in  \eqref{d:jacob} that it actually holds for all $x$.
\end{remark}

\begin{remark}\label{r:es}
Consider $\gstate=\mathbb{R}^d$ and a mapping $b\colon \mathbb{R}^d\to\mathbb{R}^d$  that is continuously differentiable with a bounded derivative. Then the ordinary differential equation $x'(t)=b(x(t))$ generates a flow $\phi\colon \mathbb{R}\times \mathbb{R}^d\to \mathbb{R}^d$ satisfying
\[
\frac{d}{dt}\phi_t(x)=b(\phi_t(x)), \quad  x\in \mathbb{R}^d, t\in \mathbb{R}.
\]
If we take as $m$ the Lebesgue measure on $\mathbb{R}^d$ then  $J_t(x)$  is the absolute value of the determinant of the derivative of the mapping $x\mapsto \phi_t(x)$, by the change of variables formula.
By Liouville's theorem, it is also given by
\begin{equation}\label{d:Liou}
J_{t}(x)
=\exp\left\{\int_0^{t} a(\phi_{r}(x))dr\right\},\quad  t\in \mathbb{R},
\end{equation}
where $a$ is the divergence of  $b$.
In a general situation, the measure $m$ might  be a product of a Lebesgue measure and a counting measure and it is hard to formulate general condition under which  Assumption \ref{a:nons} holds.
\end{remark}

As concern the jump rate function $\vartheta$ we require that the first jump time $\tau_1$ has a distribution that is absolutely continuous with respect to the Lebesgue measure on $\mathbb{R}_+$. Thus,   we assume the following condition.
\begin{assumption}\label{a:varp}
For each $x$  the function $\mathbb{R}\ni t\mapsto \int_0^t\vartheta(\phi_{r}(x))dr$ is absolutely continuous, where we
extend $\vartheta$ form $E$ to $\gstate$ by setting $\vartheta(x)=0$ for $x\not\in E$.
\end{assumption}

Next, we describe integration along the flow $\{\phi_t\}_{t\in \mathbb{R}}$.
We need to consider ``natural'' measures on the incoming $\nbound $ and the outgoing $\pbound $ parts of the boundary of $E$ that will allow us to transfer integrals over $E$ into integrals over the boundaries $\bound $.
Suppose that Assumption~\ref{a:nons} holds.
Following \cite{arlottibanasiaklods07} let
\begin{equation}\label{d:spaceEpm}
E_{\pm}=\{x\in E: 0<\lifepm(x)<\infty\},
\end{equation}
where $\life(x)$ and $\nlife(x)$ are as in \eqref{d:exit}. The properties of the flow imply that the functions $\life$ and $\nlife$ are Borel measurable and the sets
\begin{equation*}
W_{\pm}=\{(s,z): 0<s<\lifemp(z),\; z\in \bound \}
\end{equation*}
are Borel subsets of $\mathbb{R}\times \gstate$. It is easily seen that the functions $w_{\pm}\colon E_{\pm}\to W_{\pm}$ defined by
\begin{equation}\label{d:Wpm}
w_{+}(x)=(\life(x),\phi_{\life(x)}(x))\quad \text{and}\quad  w_{-}(x)=(\nlife(x),\phi_{-\nlife(x)}(x))
\end{equation}
are Borel measurable and invertible. Now, if $f$ is nonnegative and Borel measurable, then making the change of variables leads to
\begin{equation}\label{e:eplm}
\int_{E_{\pm}}f(x)m(dx)=\int_{W_{\pm}}f(\phi_{\mp s}(z))m\circ w_{\pm}^{-1}(ds,dz),
\end{equation}
where $m\circ w_{\pm}^{-1}(\F )=m(w_{\pm}^{-1}(\F ))$ for all Borel subsets $\F $ of $W_{\pm}$. We impose the following.

\begin{assumption}\label{a:dive}
There exist  finite Borel measures $m^{\pm}$ on $\bound $ such that the measures $m\circ w_{\pm}^{-1}$ can be represented by
\begin{equation}\label{e:mpm}
m\circ w_{\pm}^{-1}(\F )=\int_{\F } J_{\mp s}(z)\,ds\, m^{\pm}(dz),\quad \F \in \mathcal{B}(W_{\pm}),
\end{equation}
where $w_{\pm}$ are as in \eqref{d:Wpm} and $J_{\mp s}$ satisfy \eqref{d:jacob}.
\end{assumption}
\begin{remark}
Note that if Assumption~\ref{a:dive} holds true then it follows from \eqref{e:eplm} and \eqref{e:mpm} that, for any nonnegative and Borel measurable $f$, we have
\begin{equation}\label{e:eplus}
 \int_{E_+}f(x)\,m(dx)=\int_{\pbound }\int_{0}^{\nlife(z)} f(\phi_{-s}(z))J_{-s}(z)\,ds\, m^{+}(dz)
\end{equation}
and
\begin{equation}\label{e:eminus}
 \int_{E_{-}}f(x)\,m(dx)=\int_{\nbound }\int_{0}^{\life(z)} f(\phi_s(z))J_s(z)\,ds\, m^{-}(dz),
\end{equation}
where $E_{\pm}$ are as in \eqref{d:spaceEpm}.
Thus Assumption~\ref{a:dive} allows us to compute integrals over $E$ via integration along the flow coming from the boundary. Formula \eqref{e:eplus}  serves here as the change of variables formula in which each point $x\in E$ with $\life(x)<\infty$ can be represented  by  $x=\phi_{-s}(z)$ for some $z\in \pbound$ and $s<\nlife(z)$. Similarly in \eqref{e:eminus}, each point $x\in E$ with $\nlife(x)<\infty$  is given by  $x=\phi_{s}(z)$ for some $z\in \nbound$ and $s<\life(z)$.
\end{remark}

\begin{remark}
Note that if there exists a bounded Borel measurable function $a$ such that $J_t$ is given by \eqref{d:Liou},
then Assumption \ref{a:dive} holds true, see e.g. \cite[Proposition 3.11]{arlottibanasiaklods07}.
In particular, if $E^0$ is an open subset of $\mathbb{R}^d$ with a sufficiently regular boundary, $m$ is the Lebesgue measure and the flow is generated by $x'(t)=b(x(t))$ as in Remark \ref{r:es},  then $\bound=\{z\in \partial E^0: \pm \langle b(z),n(z)\rangle >0\}$ and
the measures $m^{\pm}$   are given by
\[
dm^{\pm}(z)=\pm\langle b(z),n(z)\rangle d\sigma(z),
\]
where $n(z)$ is the   outward normal at $z\in \partial E^0$ and $\sigma$ is the surface Lebesgue measure on $\partial E^0$.
\end{remark}

Finally, given the measures $m^{\pm}$  on $\mathcal{B}(\bound )$ as in Assumption~\ref{a:dive} the jump distribution $\mathcal{P}$  is assumed to be non-singular in the following sense. .

\begin{assumption} \label{a:jump}
There exist two positive linear operators $P_{0}\colon L^1(E,m)\times L^1(\pbound ,m^{+})\to  L^1(E,m)$ and   $P_{\partial}\colon L^1(E,m)\times L^1(\pbound ,m^{+})\to L^1(\nbound ,m^{-})$ such that, for every $\F \in \mathcal{B}(E)$,
\begin{multline}\label{e:jump}
\int_{E} \jump (x,\F )f(x)m(dx)+\int_{\pbound } \jump (x,\F )f_{\partial^+}(x)m^{+}(dx)\\ =
\int_{\F \cap E^0} P_{0}(f,f_{\partial^+})(x)m(dx)+ \int_{\F \cap \nbound } P_{\partial}(f,f_{\partial^+})(x)m^{-}(dx),
\end{multline}
for all nonnegative $f\in L^1(E,m)$ and  $f_{\partial^+}\in L^1(\pbound ,m^{+})$.
\end{assumption}

Note that in equation \eqref{e:jump}  the action of the transition kernel $\jump$ is divided into separate parts:  random jumps from $E\subseteq E^0\cup \nbound$ and  forced jumps from the boundary $\pbound$. This post-jump locations in  the set $E^0$ and in the boundary  $\nbound$ are assumed to be  absolutely continuous with respect to $m$ and $m^{-}$.
The operator $P_{0}$  is connected with jumps from the set $E\cup \pbound $  to the inside $E^0$ of $E$, while the operator $P_{\partial}$ is connected with  jumps from the set $E\cup\pbound $ to the boundary $\nbound $.

With these notations and assumptions we obtain one of the main results of the paper.
\begin{theorem}\label{t:mPg} Suppose that Assumptions~\ref{a:nons}--\ref{a:jump} hold true. Then the minimal process $\{X(t)\}_{t\ge 0}$ with characteristics $(\phi,\vartheta,\jump)$ induces a substochastic  semigroup  $\{P(t)\}_{t\ge 0}$ on $L^1(E,m)$.
\end{theorem}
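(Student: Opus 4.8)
The family $\{P(t)\}$ is not given in advance; it is defined by the identity \eqref{d:indss}. So the first and decisive task is purely measure-theoretic: for fixed $t>0$ and $f\in L^1(E,m)$ with $f\ge 0$, show that the set function $\mu^f_t(\F)=\int_E \mathbb{P}_x(X(t)\in \F,\, t<\tau_\infty)f(x)\,m(dx)$ is a finite measure that is absolutely continuous with respect to $m$. Granting this, I would define $P(t)f:=d\mu^f_t/dm$ and extend to signed $f$ by linearity; then $\{P(t)\}$ automatically consists of positive contractions, since the transition kernels are nonnegative and, taking $\F=E$ in \eqref{d:indss}, $\|P(t)f\|=\int_E \mathbb{P}_x(t<\tau_\infty)f(x)\,m(dx)\le \|f\|$ for $f\ge 0$. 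Thus the whole weight of the theorem rests on absolute continuity, which is exactly what Assumptions~\ref{a:nons}--\ref{a:jump} are engineered to supply.

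To prove absolute continuity I would decompose the transition function according to the number of jumps before time $t$: writing $P_n(t,x,\F)=\mathbb{P}_x(X(t)\in\F,\ \tau_n\le t<\tau_{n+1})$ one has $\mathbb{P}_x(X(t)\in\F,\,t<\tau_\infty)=\sum_{n\ge 0}P_n(t,x,\F)$. The term $n=0$ is the pure no-jump motion, for which \eqref{d:Phi} gives the explicit formula $P_0(t,x,\F)=\Phi_x(t)\,\mathbf{1}_\F(\phi_t(x))$, so that $\mu^{f,0}_t(\F)=\int_E \Phi_x(t)\mathbf{1}_\F(\phi_t(x))f(x)\,m(dx)$. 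Here the non-singularity of the flow from Assumption~\ref{a:nons} (formula \eqref{d:RND}, with Radon--Nikodym density $J_{-t}$) turns the push-forward under $\phi_t$ into an absolutely continuous measure, while the survival factor $\Phi_x(t)$ acts by multiplication; Assumption~\ref{a:varp} guarantees that this factor is genuinely absolutely continuous in $t$. This produces a free-streaming-with-absorption operator $T_0(t)$, and the factorisation of both the hitting-time indicator $\mathbf{1}_{[0,\life(x))}$ and the exponential killing along the flow shows that $\{T_0(t)\}$ is itself a substochastic semigroup on $L^1(E,m)$ (this is the content of the flow-induced semigroups treated in~\ref{s:appen}).

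For the higher terms I would set up a Duhamel recursion by conditioning on the first jump. At time $\tau_1=s$ two channels contribute: a random jump at rate $\vartheta(\phi_s(x))$ with $s<\life(x)$, and a forced jump at $s=\life(x)$ from the outgoing boundary $\pbound$; in either case the post-jump law is $\jump(\phi_s(x),\cdot)$, and the process then restarts and contributes $P_n(t-s,\cdot,\F)$. Assumption~\ref{a:jump} (identity \eqref{e:jump}) is precisely what lets me redistribute these post-jump distributions into an absolutely continuous interior part, via $P_0$, and an absolutely continuous boundary part on $\nbound$, via $P_\partial$; Assumption~\ref{a:dive} (through the change-of-variables identities \eqref{e:eplus}--\eqref{e:eminus}) converts the integration along the flow up to $\pbound$ into integrals against $ds\,m^{+}(dz)$, so the forced-jump flux is made explicit. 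A straightforward induction then shows that each $\mu^{f,n}_t$ is absolutely continuous, with associated positive operators $T_n(t)$, and that the partial sums satisfy $\sum_{k=0}^n \|T_k(t)f\|\le \|f\|$ for $f\ge 0$, because this left side equals $\int_E \mathbb{P}_x(\tau_{n+1}>t)f\,m(dx)$. Hence $P(t)=\sum_{n\ge 0}T_n(t)$ converges strongly to a positive contraction satisfying \eqref{d:indss}.

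It remains to check the semigroup laws, and here the work is light: $P(t+s)=P(t)P(s)$ follows from the Chapman--Kolmogorov equation for the sub-Markov kernel $P(t,x,\cdot)$ together with Fubini in \eqref{d:indss}, and strong continuity holds because $\{T_0(t)\}$ is strongly continuous while $\sum_{n\ge 1}\|T_n(t)f\|\to 0$ as $t\downarrow 0$. I expect the main obstacle to be the boundary bookkeeping in the inductive step: one must verify that the mass $T_0$ loses through the outgoing boundary $\pbound$ is exactly the input fed back by the forced-jump term, with the measures $m^{+}$ and the operators $P_0,P_\partial$ matched so that nothing is lost or double counted and the recursion stays sub-probabilistic. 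This is the one place where Assumptions~\ref{a:dive} and~\ref{a:jump} must be used in tandem. An alternative to the hands-on series is to build $P(t)$ abstractly from the perturbation theorem~\ref{t:gmain}, perturbing the action of the generator of $\{T_0(t)\}$ by the random-jump redistribution and its domain (the boundary condition) by the forced-jump redistribution, and then identifying the resulting semigroup with the process through \eqref{d:indss}; in that route the same obstacle reappears as the verification of the hypotheses of~\ref{t:gmain}.
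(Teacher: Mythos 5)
Your jump-number expansion is, at bottom, the same decomposition the paper uses, but the paper runs it in the resolvent domain rather than in real time, and your closing ``alternative'' is in fact the paper's actual proof. The paper first constructs $\{P(t)\}_{t\ge0}$ abstractly (Theorem~\ref{th:ptsmall}), verifying hypotheses \eqref{S1}--\eqref{S4} of Theorem~\ref{t:gmain} by means of Theorem~\ref{t:gs0t} (your $T_0$ is the semigroup \eqref{d:ss0}), Lemma~\ref{t:psil} for the right inverse $\Psi(\lambda)$, and Green's identity \eqref{e:Afin0} combined with Assumption~\ref{a:jump}. It then proves \eqref{d:indss} not by a time-domain Duhamel induction but by matching Laplace transforms: the resolvent series \eqref{d:resg} is shown, term by term, to be dual to the expansion $U_\lambda\psi=\sum_n \mathcal{K}_\lambda^n U_\lambda^0\psi$ of the process over the embedded chain $(X(\tau_n),\tau_n)$ (Lemma~\ref{l:ltr}), and equality \eqref{d:indss} is recovered by uniqueness of Laplace transforms, using right-continuity of sample paths, Lipschitz test functions and approximation of indicators of closed sets.

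The genuine gap in your primary route sits exactly where you write ``a straightforward induction then shows'' that each $\mu^{f,n}_t$ is absolutely continuous with associated operators $T_n(t)$. The mass that $P_\partial$ deposits on $\nbound$ at a \emph{fixed} jump time $s$, transported by the flow for the fixed remaining time $t-s$, lives on $\{\phi_{t-s}(z):z\in\nbound\}$, an $m$-null set: there is no fixed-time substochastic operator from $L^1(\nbound,m^{-})$ into $L^1(E,m)$ describing boundary re-entry. Absolute continuity at time $t$ reappears only after integrating over the law of the jump time, which is absolutely continuous in $s$ thanks to Assumption~\ref{a:varp} (rate jumps) and the disintegration \eqref{e:mpm} (forced jumps). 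Consequently your recursion cannot be a composition of fixed-time operators $T_n(t-s)$ applied to the post-jump output: the $T_n(t-s)$ are only defined on $L^1(E,m)$, while part of the post-jump mass lies on $\nbound$, so the induction must be reformulated on pairs, tracking joint densities of $(X(\tau_n),\tau_n)$ with respect to $m\otimes ds$ and $m^{-}\otimes ds$, and the semigroup-law and strong-continuity checks become convolution arguments rather than the ``light'' ones you claim. This is precisely the difficulty the paper's resolvent formulation dissolves: after Laplace integration in time the boundary re-entry becomes the bounded positive operator $\Psi(\lambda)$ of \eqref{e:psilam}, and the whole recursion collapses to the monotone geometric series $\sum_n(\mathcal{B}R(\lambda,\mathcal{A}))^n$ on $L^1\times L^1_{\partial}$, at the price of the Laplace-inversion step that your direct route would have avoided. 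Your outline is salvageable along these lines, but the inductive step is the real content of the theorem, not bookkeeping, and as stated it would fail if read literally.
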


The proof of Theorem \ref{t:mPg} will be given in Section \ref{s:proofm1}.
The semigroup
$\{P(t)\}_{t\ge 0}$ from Theorem~\ref{t:mPg} will be referred to as the \emph{substochastic semigroup corresponding to} $(\phi,\vartheta,\jump)$.

\subsection{Generator of the induced semigroup}\label{s:gen}
Let $(\phi,\vartheta,\mathcal{P})$ be the characteristics of the minimal process $\{X(t)\}_{t\ge 0}$ such  that Assumptions~\ref{a:nons}--\ref{a:jump}  hold true. In this section we turn to the characterization of the generator of the substochastic semigroup corresponding to $(\phi,\vartheta,\jump)$. To identify the generator we need to introduce some additional notations.

In the study of the deterministic part of the process we use the approach of~\cite{arlottibanasiaklods07,arlottibanasiaklods09}.
As in~\cite{arlottibanasiaklods09} we define the space of \emph{test functions} $\mathfrak{N}$ as follows. Let $\mathfrak{N}$ be the set of all measurable and bounded functions $\psi  \colon E\to \mathbb{R}$ with compact support in $E^0$ and such that for any $x\in E$ the function
\[
(-\nlife(x),\life(x))\ni t\mapsto \psi  (\phi_t(x))
\]
is continuously differentiable with bounded and measurable derivative at $t=0$, i.e. the mapping
\[
 x\mapsto \frac{d(\psi  \circ \phi_t)}{dt}\Big|_{t=0}(x)
\]
is bounded and measurable.
We define  the \emph{maximal transport operator} $\Tm$ on a set $\mathcal{D}_{\max}\subseteq L^1(E,m)$ as follows.  We say that
$ f\in \mathcal{D}_{\max}$ if there exists $g\in L^1(E,m)$ such that
\begin{equation}\label{d:Tm}
\int_{E} g(x)\psi  (x)m(dx)=\int_{E} f(x)\frac{d(\psi  \circ \phi_t)}{dt}\Big|_{t=0}(x)m(dx)
\end{equation}
for all $\psi  \in \mathfrak{N}$ and we set $\Tm f:=g$.

Let $m^{\pm}$ be  the measures   on $\mathcal{B}(\bound )$ as in Assumption~\ref{a:dive}. Given  $f\in L^1(E,m)$ we define its traces $\gamma^{\pm}f$ on the boundaries $\bound $ by the
the pointwise limits
\begin{equation}\label{d:traces}
\gamma^{\pm}f(z)=\lim_{s\to 0^{+}}f(\phi_{\mp s}(z))J_{\mp s}(z) 
\end{equation}
provided that the limits exist for $m^{\pm}$-a.e. $z\in \bound $. If $\bound =\emptyset$ then we set $\gamma^{\pm}= 0$.  It can be shown that $\gamma^{\pm}f$ exist for $f\in \mathcal{D}_{\max}$ (see~\ref{s:flow} and~\cite[Section 3]{arlottibanasiaklods09}).  We write
\[
\mathcal{D}(\gamma^{\pm})=\{f\in L^1(E,m):\gamma^{\pm}f\in L^1(\bound ,m^{\pm})\}.
\]
Note that the traces $\gamma^{\pm}\colon \mathcal{D}(\gamma^{\pm})\to L^1(\bound ,m^{\pm})$ are linear positive operators.
The following result corresponds to   Green's identity as in \cite[Proposition 4.6]{arlottibanasiaklods07} and its proof is given in \ref{s:appen}. Formula \eqref{e:Afin0} explains the interplay between the transport operator, the boundary measures and the traces, giving conservation of mass.

\begin{theorem}\label{l:Green0} Suppose that Assumptions~\ref{a:nons} and \ref{a:dive} hold. Let $(\Tm,\mathcal{D}_{\max})$ be the maximal transport operator  as in \eqref{d:Tm}.
If $f\in \mathcal{D}_{\max}$ is such that  $\gamma^{-}f\in L^1(\nbound ,m^-)$ then  $\gamma^+f\in L^1(\pbound ,m^+)$
 and
\begin{equation}\label{e:Afin0}
\int_{E} \Tm f(x)m(dx)=\int_{\nbound }\gamma^{-}f(x)m^{-}(dx)-\int_{\pbound }\gamma^{+}f(x)m^{+}(dx).
\end{equation}
\end{theorem}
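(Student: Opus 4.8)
The plan is to reduce the global identity \eqref{e:Afin0} to the fundamental theorem of calculus applied separately along each flow line, after slicing $E$ by the flow by means of the change-of-variables formulas \eqref{e:eplus} and \eqref{e:eminus}. The organizing principle is that $\Tm$ acts as a derivative along characteristics once the flow weight $J_s$ is incorporated.

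The key step — and the one I expect to be the main obstacle — is a pointwise representation of $\Tm$ along characteristics: for $f\in\mathcal{D}_{\max}$ and $m^{-}$-a.e.\ $z\in\nbound$ the map $s\mapsto f(\phi_s(z))J_s(z)$ is absolutely continuous on $(0,\life(z))$ and
\[
\frac{d}{ds}\bigl(f(\phi_s(z))J_s(z)\bigr)=-(\Tm f)(\phi_s(z))J_s(z),
\]
with the symmetric statement for $s\mapsto f(\phi_{-s}(z))J_{-s}(z)$ on $(0,\nlife(z))$ for $z\in\pbound$ (with the opposite sign). This identity is what upgrades the purely weak definition \eqref{d:Tm} to something integrable line by line; it simultaneously yields existence of the one-sided limits in \eqref{d:traces} as the endpoint values $\lim_{s\to 0^+}f(\phi_s(z))J_s(z)=\gamma^-f(z)$ and $\lim_{s\to 0^+}f(\phi_{-s}(z))J_{-s}(z)=\gamma^+f(z)$. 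I would establish it by testing \eqref{d:Tm} against functions $\psi$ localized in flow tubes and disintegrating the bulk integral along the flow through \eqref{e:eminus}, following \cite{arlottibanasiaklods09}; this measure-theoretic argument, carried out in \ref{s:appen}, is the technical heart.

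Granting this, I would partition $E$ (up to $m$-null sets, using $m(\partial E)=0$) according to the life-times into $E_+\cap E_-$ (both $\life,\nlife$ finite), $E_-\setminus E_+$ (where $\nlife<\infty=\life$), $E_+\setminus E_-$ (where $\life<\infty=\nlife$) and the remainder $\{\life=\nlife=\infty\}$. On each flow line the integral of $(\Tm f)\circ\phi_\cdot\,J_\cdot$ telescopes to the difference of the weighted endpoint values. Slicing $E_+$ by exit points $w\in\pbound$ through \eqref{e:eplus} produces the term $-\int_{\pbound}\gamma^+f\,dm^+$ together with a backward-endpoint remainder, while slicing $E_-$ by entry points $z\in\nbound$ through \eqref{e:eminus} produces $+\int_{\nbound}\gamma^-f\,dm^-$ together with a forward-endpoint remainder. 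On $\{\life=\nlife=\infty\}$ both endpoint values vanish — an $L^1$ function whose weighted restriction to a line is absolutely continuous with integrable derivative must tend to $0$ at an infinite life-time — so that piece contributes nothing, and the same vanishing annihilates the remainders attached to infinite life-times.

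It then remains to match the finite-life-time remainders, which live on $E_+\cap E_-$, where each line joins an entry point $z\in\nbound$ to the exit point $\phi_{\life(z)}(z)\in\pbound$. Using the cocycle relation \eqref{d:jacob} I would show that the weighted far-endpoint value equals $\gamma^-f(z)/J_{\life(z)}(z)$. Comparing the two representations of $m$ on $E_+\cap E_-$ coming from \eqref{e:eplus} and \eqref{e:eminus} forces the boundary change-of-variables identity $m^+\bigl(\phi_{\life(z)}(z)\in dw\bigr)=J_{\life(z)}(z)\,m^-(dz)$, which is exactly the Jacobian needed to convert the $\pbound$-remainder into $\int_{\nbound\cap\{\life<\infty\}}\gamma^-f\,dm^-$; adding the $\{\life=\infty\}$ part recovers the full $\int_{\nbound}\gamma^-f\,dm^-$ and cancels the spurious terms, giving \eqref{e:Afin0}. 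Finally, integrability $\gamma^+f\in L^1(\pbound,m^+)$ follows by running the same line-by-line estimate with $|\Tm f|$ and $|\gamma^-f|$, since $|\gamma^+f(w)|$ is bounded by the far-endpoint value plus $\int_0^{\nlife(w)}|\Tm f|(\phi_{-s}(w))J_{-s}(w)\,ds$, whose $m^+$-integrals are controlled by $\|\Tm f\|_{L^1}$ and $\|\gamma^-f\|_{L^1(m^-)}$, the latter finite by hypothesis.
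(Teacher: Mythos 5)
Your slicing of $E_+$ and $E_-$ via \eqref{e:eplus}--\eqref{e:eminus}, the endpoint bookkeeping, and the derivation of the boundary change of variables (which is exactly \eqref{e:gammam}, cf.\ \cite[Proposition 3.12]{arlottibanasiaklods07}) are all sound, as is the $L^1$ estimate for $\gamma^{+}f$; the pointwise representation along characteristics you single out as the technical heart is indeed Theorem~\ref{thm:abl}. The genuine gap is your treatment of the remainder set $E_{\infty}=\{x\in E:\life(x)=\nlife(x)=\infty\}$ (which is $E\setminus(E_+\cup E_-)$ up to the $m$-null set $\partial E$ and points with vanishing lifetimes). You dismiss it by asserting that ``an $L^1$ function whose weighted restriction to a line is absolutely continuous with integrable derivative must tend to $0$ at an infinite life-time'', but that argument presupposes integrability of $f$ along $m$-almost every flow line of $E_{\infty}$, and the only flow-adapted disintegrations of $m$ supplied by Assumption~\ref{a:dive} are \eqref{e:eplus} and \eqref{e:eminus}, which parametrize precisely $E_{\pm}$: on the invariant set $E_{\infty}$ there is no measure on the family of flow lines, so ``a.e.\ line'' is not even well defined, and on periodic orbits (which may fill $E_{\infty}$) there are no endpoints and the weighted function need not decay at all. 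The deficiency is visible in the extreme case $\bound=\emptyset$ (e.g.\ $E=\gstate$), where the theorem reduces to $\int_E \Tm f\,dm=0$ for every $f\in\mathcal{D}_{\max}$ and your slicing machinery says nothing whatsoever.

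The missing statement $\int_{E_{\infty}}\Tm f\,dm=0$ is true and can be repaired with tools already in the appendix: $E_{\infty}$ is invariant under the flow, so for the mollifiers $\varrho_n$ one has $\mathbf{1}_{E_{\infty}}S_0(s)f=S_0(s)(\mathbf{1}_{E_{\infty}}f)$, and \eqref{eq:P0St} with $\life\equiv\infty$ on $E_{\infty}$ gives $\int_{E_{\infty}}(\varrho_n'\diamond f)\,dm=-\varrho_n(0)\int_{E_{\infty}}f\,dm$; combined with \eqref{e:Tmrhof} this yields $\int_{E_{\infty}}\Tm(\varrho_n\diamond f)\,dm=0$ for each $n$, and letting $n\to\infty$ using \eqref{e:rhoTmf} closes the gap. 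With that patch your argument becomes a complete proof along the lines of \cite[Proposition 4.6]{arlottibanasiaklods07} and \cite{arlottibanasiaklods09}. Note that the paper's own proof takes a genuinely different and shorter route that never touches the conservative set: it writes $g=(\lambda-\Tm)f$, splits $f=R(\lambda,\mathrm{T}_0)g+\Psi(\lambda)\gamma^{-}f$ using the generator of Theorem~\ref{t:pt0g}, and adds the two integral identities of Lemma~\ref{r:psil} and Lemma~\ref{r:gamA0} (with $\vartheta\equiv 0$), whose Fubini-type computations absorb infinite lifetimes automatically through the convention $e^{-\lambda\life(x)}=0$ when $\life(x)=\infty$ --- precisely the degenerate behavior your direct approach must handle by hand.
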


We now define the operator $A\colon \mathcal{D}\to L^1(E,m)$ by
\begin{equation}\label{d:operatorA}
Af =\Tm f -\vartheta f,\quad f\in \mathcal{D},
\end{equation}
where the transport operator $\Tm$ is as in \eqref{d:Tm} and
\begin{equation}\label{d:domainopA}
\mathcal{D}=\{f\in \mathcal{D}_{\max}: \gamma^{-}f \in L^1(\nbound ,m^{-}), \vartheta f\in L^1(E,m)\}.
\end{equation}
Note that $\mathcal{D}\subset \mathcal{D}(\gamma^{+})$, by Theorem~\ref{l:Green0}.
The next result implies that a restriction of the operator $A$ is the generator of a substochastic semigroup. It  extends  the result of \cite{arlottibanasiaklods07} and its proof is given in \ref{s:appen}.
\begin{theorem}\label{t:gs0t} Suppose that Assumptions~\ref{a:nons}--\ref{a:dive} hold. Let $(A,\mathcal{D})$ be as in \eqref{d:operatorA}--\eqref{d:domainopA}.
Then the operator $(A_0,\mathcal{D}(A_0))$, defined as the restriction of the operator $(A,\mathcal{D})$
\begin{equation*}
A_0f=Af, \quad f\in\mathcal{D}(A_0)=\{f\in \mathcal{D}: \gamma^{-}f=0\},
\end{equation*}
is the generator of a substochastic semigroup $\{S(t)\}_{t\ge 0}$ on $L^1(E,m)$
given by
\begin{equation}\label{d:ss0}
S(t)f(x)=\mathbf{1}_{E}(\phi_{-t}(x))f(\phi_{-t}(x))J_{-t}(x)e^{-\int_0^t \vartheta(\phi_{-r}(x))dr}
\end{equation}
for $t>0$, $x\in E$, $f\in L^1(E,m)$. Moreover,
\begin{equation}\label{eq:T0St}
\int_E \psi (x) S(t)f(x)\,m(dx)=\int_E e^{-\int_{0}^{t}\vartheta(\phi_r(x))dr}\mathbf{1}_{[0,\life(x))}(t) \psi (\phi_t(x))f(x)\,m(dx)
\end{equation}%
for all $t\ge 0$, $f\in  L^1(E,m)$, $f\ge 0$, and all nonnegative Borel measurable $\psi $.
\end{theorem}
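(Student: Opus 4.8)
The plan is to regard the explicit formula \eqref{d:ss0} as the \emph{definition} of a family $\{S(t)\}_{t\ge 0}$ and to proceed in two stages: first verify that it is a substochastic semigroup, and then identify its generator with $(A_0,\mathcal{D}(A_0))$. Throughout I would carry the transport part and the absorption term $-\vartheta$ together, encoding the latter in the Feynman--Kac factor $e^{-\int_0^t \vartheta(\phi_{-r}(x))dr}$, since the case $\vartheta\equiv 0$ is exactly the no-inflow transport semigroup of \cite{arlottibanasiaklods07} and the present result is its extension.

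\emph{Semigroup structure and contraction.} Positivity of $S(t)$ is immediate from \eqref{d:ss0}, as $J_{-t}\ge 0$ and the exponential is positive. For the semigroup law $S(t+s)=S(t)S(s)$ I would substitute the formula for $S(s)f$ into that for $S(t)$, replace $\phi_{-u}(\phi_{-t}(x))$ by $\phi_{-u-t}(x)$ using \eqref{e:ds}, combine the two Jacobians through the cocycle identity $J_{-s}(\phi_{-t}(x))J_{-t}(x)=J_{-t-s}(x)$ from \eqref{d:jacob}, and merge the exponentials via the substitution $v=u+t$ in $\int_0^s\vartheta(\phi_{-u-t}(x))du$, which reproduces the factor for $S(t+s)$. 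Strong continuity at $t=0$ follows from continuity of the flow, from $J_{-t}\to 1$ and convergence of the exponential to $1$, and from $\mathbf{1}_{E}(\phi_{-t}(x))\to\mathbf{1}_{E}(x)$ for $m$-a.e.\ $x$ (here $m(\partial E)=0$ is used), first on a dense set of regular functions and then by the uniform contraction bound. To obtain that bound I would prove the weak identity \eqref{eq:T0St} by the change of variables $y=\phi_{-t}(x)$ in $\int_E\psi(x)S(t)f(x)m(dx)$: by \eqref{d:RND} the push-forward relation reads $\int H(x)J_{-t}(x)m(dx)=\int H(\phi_t(y))m(dy)$, and applying it with $H$ the remaining integrand, together with $\phi_{-r}(\phi_t(y))=\phi_{t-r}(y)$, turns $\int_0^t\vartheta(\phi_{-r}(x))dr$ into $\int_0^t\vartheta(\phi_r(y))dr$ and yields the indicator $\mathbf{1}_{[0,\life(y))}(t)$ once the set where $y\in E$ and $\phi_t(y)\in E$ is identified with $\{t<\life(y)\}$ up to an $m$-null set. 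Taking $\psi=\mathbf{1}_{E}$ in \eqref{eq:T0St} and bounding the integrand by $f$ then gives $\|S(t)f\|\le\|f\|$ for $f\ge 0$, so $\{S(t)\}$ is substochastic.

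\emph{Identification of the generator.} Let $G$ be the generator of $\{S(t)\}$ and fix $\lambda>0$. I would compute $g:=R(\lambda,G)f=\int_0^\infty e^{-\lambda t}S(t)f\,dt$ and show that $g\in\mathcal{D}(A_0)$ with $(\lambda-A_0)g=f$, and conversely $R(\lambda,G)(\lambda-A_0)f=f$ for $f\in\mathcal{D}(A_0)$; since a $C_0$-semigroup is determined by its resolvent, this forces $G=A_0$ and gives density of $\mathcal{D}(A_0)$ automatically. Concretely: testing $g$ against $\psi\in\mathfrak{N}$ and differentiating the orbit map $s\mapsto g(\phi_s(x))$ at $s=0$ shows $g\in\mathcal{D}_{\max}$ with the weak equation \eqref{d:Tm} holding for $\Tm g=\lambda g+\vartheta g-f$; the explicit form of $g$ gives $\vartheta g\in L^1$ and, via the pointwise limit \eqref{d:traces} together with the shrinking of the backward integration window as the orbit emerges from $\nbound$, that $\gamma^{-}g=0$, so $g\in\mathcal{D}(A_0)$. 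Green's identity (Theorem~\ref{l:Green0}) and the criterion \eqref{e:subsem} supply the consistency between the incoming-trace condition and the substochastic property.

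\emph{Main obstacle.} The genuinely technical core is the trace analysis: establishing $\gamma^{-}g=0$ rigorously and, in the contraction step, justifying the boundary behaviour of the change of variables so that the naive condition ``$\phi_{-t}(x)\in E$'' reduces to ``$t<\life$'' up to $m$-null sets. Both rest on Assumption~\ref{a:dive}, the change-of-variables formulas \eqref{e:eplus}--\eqref{e:eminus}, the measurability of $\life$ and $\nlife$, and $m(\partial E)=0$; this is exactly the part that extends \cite{arlottibanasiaklods07}, which I would carry out with their trace machinery in the appendix.
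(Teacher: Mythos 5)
Your first stage coincides with the paper's: the paper also takes \eqref{d:ss0} as the definition, obtains the substochastic semigroup property and the weak identity \eqref{eq:T0St} from the pure-flow case (Theorem~\ref{t:semP0}, proved by exactly the change of variables \eqref{d:RND} you describe, with the null-set identification of $\{\phi_t(y)\in E\}$ with $\{t<\life(y)\}$ that you correctly flag) combined with the Feynman--Kac factor, using Assumption~\ref{a:varp}. Your forward generator step is also the paper's argument: for $g=R(\lambda,G)f$ one tests against $\psi\in\mathfrak{N}$ to get $g\in\mathcal{D}_{\max}$ with $\Tm g=(\lambda+\vartheta)g-f$, checks $\vartheta g\in L^1(E,m)$, and kills the incoming trace through the explicit Duhamel representation along orbits emanating from $\nbound$. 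This yields $\mathcal{D}(G)\subseteq\mathcal{D}(A_0)$ with $Gg=A_0g$ there, i.e.\ that $G$ is a \emph{restriction} of $A_0$.

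The gap is the converse inclusion, which is where the paper's one essential extra idea lives. You assert $R(\lambda,G)(\lambda-A_0)f=f$ for all $f\in\mathcal{D}(A_0)$ but give no mechanism; given the forward step, this assertion is equivalent to injectivity of $\lambda-A_0$ on all of $\mathcal{D}(A_0)$ (if $h=(\lambda-A_0)f$, then $\tilde f=R(\lambda,G)h$ also solves $(\lambda-A_0)\tilde f=h$, and one must show $f=\tilde f$). The tools you invoke do not supply this: Green's identity with $\gamma^-f=0$ gives \eqref{e:subsem}, i.e.\ $\int_E A_0f\,dm\le 0$ for \emph{nonnegative} $f$, which says nothing about a sign-changing element of $\Ker(\lambda-A_0)$; and the kernel of $\lambda-A$ without the boundary condition is genuinely large — it contains $\Psi(\lambda)\big(L^1(\nbound,m^-)\big)$ by Lemma~\ref{t:psil} — so one must actually prove that the condition $\gamma^-f=0$ annihilates it. The paper closes this by a maximality argument: using Theorem~\ref{t:pt0g} one identifies $\mathcal{D}(A_0)=\mathcal{D}(\mathrm{T}_0)\cap L^1_{\vartheta}$, where $L^1_{\vartheta}=\{f\in L^1(E,m):\vartheta f\in L^1(E,m)\}$, observes that $A=\mathrm{T}_0-\vartheta$ is dissipative on this domain (sum of the generator $\mathrm{T}_0$ and the multiplication operator $-\vartheta$, the latter dissipative with respect to every duality element, so the sum is dissipative), and invokes the fact that a dissipative extension of the generator of a contraction semigroup coincides with that generator — dissipativity gives $\lambda\|f-\tilde f\|\le\|(\lambda-A_0)(f-\tilde f)\|=0$. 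Alternatively you could get injectivity from the trajectory representation of Theorem~\ref{thm:abl} (a kernel element satisfies a linear ODE along each orbit, and zero incoming trace together with integrability forces it to vanish on every orbit), but some argument of this kind must be added: as written, your sketch establishes only that $G\subseteq A_0$, not the claimed equality.
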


Our second main result provides a functional-analytic description of the minimal process.

\begin{theorem}\label{th:ptsmal} Suppose that $(\phi,\vartheta,\jump)$ satisfy Assumptions~\ref{a:nons}--\ref{a:jump}.  Let $(A,\mathcal{D})$ be defined by~\eqref{d:operatorA}--\eqref{d:domainopA} and let
$B\colon \mathcal{D}\to L^1(E,m)$, $\Psi\colon \mathcal{D}\to L^1(\nbound ,m^{-})$ be given  by
\begin{gather}\label{d:part}
Bf=P_{0}(\vartheta f, \gamma^+  f),\quad \Psi f=P_{\partial} (\vartheta f, \gamma^{+}f), \quad f\in \mathcal{D},
\end{gather}
where $P_0$, $P_{\partial}$ satisfy \eqref{e:jump}.
Then
the generator $(G,\mathcal{D}(G))$ of the semigroup $\{P(t)\}_{t\ge 0}$ corresponding to $(\phi,\vartheta,\jump)$  is an extension of the operator $(A_{\Psi}+B,\mathcal{D}(A_{\Psi}))$, i.e.
\[
Gf=A_{\Psi}f+Bf,\quad A_{\Psi}f=Af,\quad  f\in \mathcal{D}(A_{\Psi})=\{f\in \mathcal{D}:\gamma^{-}f=\Psi f\}.
\]
Moreover, if $\mathcal{D}(G)=\mathcal{D}(A_{\Psi})$ then $\{P(t)\}_{t\ge 0}$ is stochastic.
\end{theorem}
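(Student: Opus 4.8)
The plan is to realize $(G,\mathcal{D}(G))$ as the output of the abstract perturbation scheme of Theorem~\ref{t:gmain}, applied to the free-streaming generator $(A_0,\mathcal{D}(A_0))$ of Theorem~\ref{t:gs0t}, and then to identify the resulting semigroup with the process semigroup $\{P(t)\}_{t\ge 0}$ of Theorem~\ref{t:mPg}. Here $A_0$ is the unperturbed generator (transport with killing at rate $\vartheta$ under the absorbing condition $\gamma^{-}f=0$), the operator $B$ plays the role of the perturbation of the action, and $\Psi$ that of the perturbation of the domain: replacing $\gamma^{-}f=0$ by the non-local condition $\gamma^{-}f=\Psi f$ encodes that the incoming flux on $\nbound$ is fed by the redistributed killing rate and outgoing flux. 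First I would check that $B$ and $\Psi$ are positive (they are built from the positive operators $P_0,P_{\partial}$, the nonnegative multiplier $\vartheta$, and the positive traces $\gamma^{\pm}$) and that the remaining structural hypotheses of Theorem~\ref{t:gmain} hold in this functional setting, which is precisely what Section~\ref{s:deter} supplies.

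The arithmetic heart of the verification is a conservation-of-mass identity. For nonnegative $f\in\mathcal{D}(A_{\Psi})$, Green's identity \eqref{e:Afin0} gives $\int_E \Tm f\,dm=\int_{\nbound}\gamma^{-}f\,dm^{-}-\int_{\pbound}\gamma^{+}f\,dm^{+}$, while applying \eqref{e:jump} with the Borel set taken to be all of $E$ (so that $\jump(x,E)=1$) and with $(\vartheta f,\gamma^{+}f)$ in place of $(f,f_{\partial^+})$ yields
\begin{equation*}
\int_{E^0} Bf\,dm=\int_E \vartheta f\,dm+\int_{\pbound}\gamma^{+}f\,dm^{+}-\int_{\nbound}\Psi f\,dm^{-}.
\end{equation*}
Since $m(\partial E)=0$ we may integrate $Bf$ over all of $E$, and combining the two displays the transport and killing terms, as well as the outgoing-flux terms, cancel, leaving
\begin{equation*}
\int_E (A_{\Psi}f+Bf)\,dm=\int_{\nbound}\bigl(\gamma^{-}f-\Psi f\bigr)\,dm^{-}=0
\end{equation*}
on $\mathcal{D}(A_{\Psi})$. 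This is exactly the mass-balance (sub-)condition demanded by Theorem~\ref{t:gmain}; by that theorem there is then a substochastic semigroup $\{\tilde P(t)\}_{t\ge 0}$ whose generator extends $(A_{\Psi}+B,\mathcal{D}(A_{\Psi}))$.

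It then remains to show $\tilde P(t)=P(t)$. The plan is to exploit the explicit form \eqref{d:ss0}--\eqref{eq:T0St} of $\{S(t)\}_{t\ge 0}$ together with the Dyson--Phillips expansion furnished by Theorem~\ref{t:gmain}, and to match it term by term with the probabilistic construction of the minimal process obtained by conditioning on the number of jumps before time $t$: the free term $S(t)$ corresponds to the trajectory up to the first jump, the interior perturbation encoded by $B$ to the absolutely continuous random jumps governed by $P_0$, and the domain perturbation encoded by $\Psi$ to the forced jumps leaving through $\pbound$ and landing on $\nbound$ via $P_{\partial}$. Translating the probabilistic jump terms into the operators $B$ and $\Psi$ requires careful use of the change-of-variables formulas \eqref{e:eplus}--\eqref{e:eminus} and of the definition \eqref{d:traces} of the traces, and I expect this identification---in particular matching the \emph{forced} boundary jumps with the domain perturbation $\Psi$---to be the main obstacle. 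Once the two semigroups are shown to agree we conclude that $G$ is an extension of $A_{\Psi}+B$, as claimed.

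Finally, for the stochasticity assertion I would invoke the Hille--Yosida characterization \eqref{e:subsem}: $\{P(t)\}_{t\ge 0}$ is stochastic if and only if $\int_E Gf\,dm=0$ for every nonnegative $f\in\mathcal{D}(G)$. If $\mathcal{D}(G)=\mathcal{D}(A_{\Psi})$, then $Gf=A_{\Psi}f+Bf$ on the whole of $\mathcal{D}(G)$, so the conservation identity established above gives $\int_E Gf\,dm=0$ for all nonnegative $f\in\mathcal{D}(G)$; equality in \eqref{e:subsem} then forces the semigroup to be stochastic.
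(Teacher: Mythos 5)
Your generation and stochasticity arguments coincide with the paper's own route: conditions \eqref{S1}--\eqref{S4} are verified for $(A_0,\mathcal{D}(A_0))$ from Theorem~\ref{t:gs0t} with the operators $B$, $\Psi$ of \eqref{d:part}, the mass balance coming from Green's identity \eqref{e:Afin0} combined with \eqref{e:jump} evaluated at $\F=E$, and the final stochasticity claim from equality in \eqref{e:subsem} once $\mathcal{D}(G)=\mathcal{D}(A_{\Psi})$ forces $G=A_{\Psi}+B$; this is exactly the paper's Theorem~\ref{th:ptsmall} and the proof in Section~\ref{s:proofm1}. Two small points of precision: condition \eqref{S4} must hold for all nonnegative $f\in\mathcal{D}$, with the boundary term $\int_{\nbound}(\Psi f-\gamma^{-}f)\,dm^{-}$ included, because the Kato iteration applies it to $R(\lambda,\mathcal{A})(f,f_{\partial})$, which lies in $\mathcal{D}\times\{0\}$ but generally not in $\mathcal{D}(A_{\Psi})\times\{0\}$; your two displays in fact yield the full equality \eqref{eq:zeroo} on $\mathcal{D}$, so you should state it there rather than only on $\mathcal{D}(A_{\Psi})$. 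Also, condition \eqref{S2} is not a formality here: it requires the explicit Dirichlet operator $\Psi(\lambda)$ of \eqref{e:psilam} and the verification (Lemmas~\ref{r:psil} and~\ref{t:psil}) that it is a positive right inverse of $\gamma^{-}$ mapping into $\Ker(\lambda-A)$, which is a genuine computation, not merely positivity of $P_0$, $P_{\partial}$, $\vartheta$ and the traces.

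The genuine gap is in your identification step. You propose to match ``the Dyson--Phillips expansion furnished by Theorem~\ref{t:gmain}'' term by term, in the time domain, with the probabilistic decomposition of $P(t,x,\cdot)$ by the number of jumps. But Theorem~\ref{t:gmain} furnishes no such expansion of $P(t)$: since $B$ is unbounded and $\Psi$ is an (in general unbounded) boundary perturbation, the semigroup is constructed only as the monotone Kato limit of the approximants $\mathcal{P}_r(t)$ as $r\uparrow 1$, and what is available in closed form is the resolvent series \eqref{d:resg}, i.e.\ \eqref{e:res}; a time-domain Duhamel series for $P(t)$ is precisely what fails to be justified in this setting. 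The paper therefore performs the matching at the resolvent level: formula \eqref{e:U0lambda} shows $R(\lambda,\mathcal{A})$ is dual to $U_{\lambda}^0$, Lemma~\ref{l:ltr} shows $\mathcal{B}R(\lambda,\mathcal{A})$ is dual to the embedded-chain kernel $\mathcal{K}_{\lambda}$ of \eqref{e:Q2}, and summing the Neumann series against the expansion $U_{\lambda}\psi=\sum_{n}\mathcal{K}_{\lambda}^{n}U_{\lambda}^{0}\psi$ of \eqref{d:Ulam} gives $\int_E R(\lambda,G)f\,\psi\,dm=\int_E f\,U_{\lambda}\psi\,dm$. Equality of the semigroups is then recovered by uniqueness of the Laplace transform, which itself needs the right-continuity of $t\mapsto\mathbb{E}_x(\psi(X(t)))$ for Lipschitz $\psi$ and an approximation of indicator functions of closed sets --- regularity steps your time-domain plan silently presupposes. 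So the architecture of your proof is correct, but the central identification must be routed through the resolvents; as written, the step you yourself flag as the main obstacle rests on a tool the perturbation theorem does not provide.
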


The proof of Theorem \ref{th:ptsmal} will be given in Section \ref{s:proofm1}.  The idea of the proofs of  Theorems \ref{t:mPg} and  \ref{th:ptsmal} is the following. We see that $A_{\Psi}+B$ is a perturbation of the generator   $(A_0,\mathcal{D}(A_0))$ of a substochastic semigroup  from Theorem~\ref{t:gs0t} with $B$ changing the action of the operator $A_0$ and $\Psi$ changing its domain.
In particular, if $\Psi$ were a bounded operator  then $A_{\Psi}$ is the generator of a $C_0$-semigroup by Greiner's perturbation theorem \cite{greiner} and the existence of the semigroup $\{P(t)\}_{t\ge 0}$ with generator as described in Theorem~\ref{th:ptsmal} could be deduced form Kato--Voigt perturbation theorem \cite{voigt87,banasiakarlotti06} by showing that $A_{\Psi}$ generates a substochastic semigroup.  However, in general, the operator $\Psi$ might be  unbounded or $A_{\Psi}$ might not the generator.  To take these into account  we provide a new perturbation theorem for substochastic semigroups in Section~\ref{s:pert}
and we show in Section~\ref{s:proofs} that it can be applied in the setting of Theorem~\ref{th:ptsmal} implying the existence of the induced semigroup $\{P(t)\}_{t\ge 0}$.

Finally, consider the initial-boundary value problem
\begin{align}\label{e:ACP1}
\frac{\partial u}{\partial t}&=\Tm u-\vartheta u+P_{0}(\vartheta u,\gamma^{+} u),\\
\gamma^{-}u&=P_{\partial}(\vartheta u,\gamma^{+}u),\quad u(0)=f,\label{e:ACP2}
\end{align}
where $\Tm$ is the transport operator and  $P_0$, $P_{\partial} $ satisfy \eqref{e:jump}. 
Recall that  the Cauchy problem \eqref{e:ACP1}--\eqref{e:ACP2} is well posed if and only if  the operator $(A_{\Psi}+B,\mathcal{D}(A_{\Psi}))$ is the generator of a $C_0$-semigroup. Theorem~\ref{th:ptsmal}  shows only that an extension $G$ of the operator $(A_{\Psi}+B,\mathcal{D}(A_{\Psi}))$ is the generator. However, if $\mathcal{D}(G)=\mathcal{D}(A_\Psi)$ and $f$ is a density of $X(0)$, then $u(t)=P(t)f$ is the density of $X(t)$, $t> 0$, and $u$ satisfies \eqref{e:ACP1}--\eqref{e:ACP2}, so that this equation  can be called the Fokker--Planck equation for our Markov process.
Thus,  we need to impose additional constraints to conclude that $\mathcal{D}(G)=\mathcal{D}(A_\Psi)$. One set of such conditions is given in the next result, yet another is provided in Section~\ref{s:charg}.

\begin{corollary}\label{c:sto}
In addition to Assumptions~\ref{a:nons}--\ref{a:jump} suppose that $\vartheta$ is bounded and that either $\pbound =\emptyset$ or $\jump(z,\nbound )=1$, $z\in \pbound $, with $\inf\{\life(z):z\in \nbound \}>0$.
Then the semigroup $\{P(t)\}_{t\ge 0}$ corresponding to $(\phi,\vartheta,\jump)$ is stochastic and its generator is the operator $(A_{\Psi}+B,\mathcal{D}(A_{\Psi}))$.
\end{corollary}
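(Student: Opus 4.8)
By Theorem~\ref{th:ptsmal} the generator $(G,\mathcal{D}(G))$ of $\{P(t)\}_{t\ge 0}$ already extends $(A_{\Psi}+B,\mathcal{D}(A_{\Psi}))$, and the last assertion of that theorem reduces the whole statement to the single identity $\mathcal{D}(G)=\mathcal{D}(A_{\Psi})$. The plan is therefore to show that $(A_{\Psi}+B,\mathcal{D}(A_{\Psi}))$ is \emph{itself} the generator of a substochastic semigroup; once this is known, a comparison of resolvents will force $G=A_{\Psi}+B$. First I would record the mass balance hidden in Assumption~\ref{a:jump}. Choosing the Borel set in \eqref{e:jump} equal to $E$ and using that $\jump(x,\cdot)$ is a probability measure for every $x\in E\cup\pbound$ gives, for nonnegative $f$ and $f_{\partial^+}$,
\[
\int_{E^0}P_{0}(f,f_{\partial^+})\,dm+\int_{\nbound}P_{\partial}(f,f_{\partial^+})\,dm^{-}=\int_E f\,dm+\int_{\pbound}f_{\partial^+}\,dm^{+}.
\]
Specialising to $f_{\partial^+}=\gamma^+f$ and replacing $f$ by $\vartheta f$, and using $m(\partial E)=0$, this becomes $\int_E Bf\,dm+\int_{\nbound}\Psi f\,dm^{-}=\int_E\vartheta f\,dm+\int_{\pbound}\gamma^+f\,dm^{+}$.

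Combining the last identity with Green's identity from Theorem~\ref{l:Green0}, namely $\int_E\Tm f\,dm=\int_{\nbound}\gamma^-f\,dm^{-}-\int_{\pbound}\gamma^+f\,dm^{+}$, and the defining boundary relation $\gamma^-f=\Psi f$ of $\mathcal{D}(A_{\Psi})$, all interior and outgoing-boundary contributions cancel and one is left with
\[
\int_E (A_{\Psi}+B)f\,dm=\int_{\nbound}\bigl(\gamma^-f-\Psi f\bigr)\,dm^{-}=0,\qquad f\in\mathcal{D}(A_{\Psi}),\ f\ge 0.
\]
This is precisely the equality case of \eqref{e:subsem}, so as soon as $A_{\Psi}+B$ is shown to generate a substochastic semigroup at all, that semigroup is automatically stochastic.

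The substance of the argument, and the only place where the extra hypotheses enter, is proving that $\lambda-(A_{\Psi}+B)$ maps $\mathcal{D}(A_{\Psi})$ onto $L^1(E,m)$ for $\lambda>0$. Here I would build the resolvent from $R(\lambda,A_0)$, the resolvent of the zero-incoming-trace generator $A_0$ of Theorem~\ref{t:gs0t}, together with the transport solution operator that produces, from prescribed incoming data on $\nbound$, the unique $\lambda$-harmonic element of $\mathcal{D}_{\max}$; imposing the boundary condition $\gamma^-f=\Psi f$ then reduces surjectivity to inverting $I$ minus a positive boundary return operator on $L^1(\nbound,m^{-})$. Boundedness of $\vartheta$ makes $B$ a genuinely subordinate perturbation and renders the constraint $\vartheta f\in L^1$ in \eqref{d:domainopA} automatic, while the hypothesis that either $\pbound=\emptyset$ or $\jump(z,\nbound)=1$ with $\delta:=\inf\{\life(z):z\in\nbound\}>0$ guarantees that a trajectory can re-enter through $\nbound$ only after travelling for time at least $\delta$. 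The resulting factor $e^{-\lambda\delta}<1$ makes the return operator a strict contraction, the Neumann series converges, and $\lambda-(A_{\Psi}+B)$ is onto. This is the genuinely hard step, and it is exactly what the perturbation theorem of Section~\ref{s:pert} is designed to deliver, so I would invoke it here; probabilistically the same hypotheses prevent accumulation of jump times, so the minimal process is $m$-a.e.\ non-explosive, which is the pathwise shadow of stochasticity.

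With $A_{\Psi}+B$ established as a generator, resolvent positivity is inherited from positivity of $R(\lambda,A_0)$, of the traces $\gamma^{\pm}$, and of $B$ and $\Psi$ in \eqref{d:part}, while density of $\mathcal{D}(A_{\Psi})$ follows from that of $\mathcal{D}(A_0)$; together with the equality from the second paragraph, \eqref{e:subsem} shows that $A_{\Psi}+B$ generates a stochastic semigroup. Finally, since $G$ extends $A_{\Psi}+B$ and both generate $C_0$-semigroups, for large $\lambda$ the operators $\lambda-G$ and $\lambda-(A_{\Psi}+B)$ are bijective and agree on $\mathcal{D}(A_{\Psi})$; injectivity of $\lambda-G$ then forces $\mathcal{D}(G)=\mathcal{D}(A_{\Psi})$, whence $G=A_{\Psi}+B$ and $\{P(t)\}_{t\ge 0}$ is stochastic by Theorem~\ref{th:ptsmal}.
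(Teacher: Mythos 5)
Your proposal is correct and ends up in the same abstract framework as the paper, but it organizes the decisive estimate differently, and the comparison is instructive. The paper's proof is essentially one line: with $c=\inf\{\life(z):z\in\nbound\}$ it bounds the product-space operator, asserting $\|\mathcal{B}R(\lambda,\mathcal{A})(f,f_{\partial})\|\le(\overline{\vartheta}/\lambda+e^{-\lambda c})\|(f,f_{\partial})\|$ so that \eqref{eq:ssper} holds for large $\lambda$, and then cites Corollaries~\ref{c:gen} and~\ref{c:stochastic}, which already package both the identification of the generator and stochasticity. You instead factorize through $A_{\Psi}$: first invert $I_{\partial}-\Psi\Psi(\lambda)$ on $L^1(\nbound,m^{-})$, then treat $B$ as a subordinate perturbation — that is, you are verifying \eqref{c:rpsilam0} and \eqref{eq:sspert}, and you could simply quote Corollary~\ref{c:both} together with the last sentence of Corollary~\ref{c:stochastic}; your separate derivation of equality in \eqref{e:subsem} and the final resolvent-comparison argument forcing $\mathcal{D}(G)=\mathcal{D}(A_{\Psi})$ are correct but duplicate what those corollaries already deliver. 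Your split does buy something real: the one-step bounds it needs are genuinely true, namely $\|\Psi\Psi(\lambda)f_{\partial}\|\le(\overline{\vartheta}/\lambda+e^{-\lambda c})\|f_{\partial}\|$ by Lemma~\ref{r:psil}, and $\|BR(\lambda,A_{\Psi})f\|\le(\overline{\vartheta}/\lambda)\|f\|$ by Lemma~\ref{l:APsi}, the latter because under either hypothesis $\|Bf\|\le\overline{\vartheta}\|f\|$ (the condition $\jump(z,\nbound)=1$ forces $P_0(0,\gamma^{+}f)=0$, so the unbounded trace term drops out of $B$ — a point you should make explicit, since "boundedness of $\vartheta$" alone does not bound $B$). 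By contrast, the paper's single displayed inequality is too strong as stated: for $f$ concentrated near $\pbound$ one has, by Lemma~\ref{r:gamA0}, $\int_{\pbound}\gamma^{+}R(\lambda,A_0)f\,dm^{+}=\int_E e^{-\lambda\life(x)-\int_0^{\life(x)}\vartheta(\phi_r(x))dr}f(x)\,m(dx)$, which can be arbitrarily close to $\|f\|$ because $\life$ has no lower bound on $E$; only mass sitting on $\nbound$ picks up the factor $e^{-\lambda c}$, so it is the second iterate of $\mathcal{B}R(\lambda,\mathcal{A})$ that is a strict contraction — still sufficient for \eqref{eq:ssper}, and your factorization sidesteps the issue altogether.

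Two small repairs to your write-up. First, density of $\mathcal{D}(A_{\Psi})$ does not "follow from that of $\mathcal{D}(A_0)$": $\gamma^{-}f=0$ does not imply $\Psi f=0$, so $\mathcal{D}(A_0)\not\subset\mathcal{D}(A_{\Psi})$; density is supplied by the perturbation machinery itself (Lemma~\ref{t:geth}). Second, the contractivity of your boundary return operator is not due to the factor $e^{-\lambda c}$ alone: interior jumps also deposit mass on $\nbound$ through $P_{\partial}(\vartheta\,\Psi(\lambda)f_{\partial},\cdot)$, contributing the $\overline{\vartheta}/\lambda$ term, so strict contractivity holds only for $\lambda$ sufficiently large — which is all you need, but should be said.
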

The proof of Corollary \ref{c:sto} will be given in Section \ref{s:proofm1}.
Note that the condition $\pbound =\emptyset$ implies that the operators $P_0\colon L^1(E,m)\times L^1(\pbound,m^+)\to L^1(E,m)$ and $P_{\partial}\colon L^1(E,m)\times L^1(\pbound,m^+)\to L^1(\nbound,m^{-})$ are defined on $L^1(E,m)$, while $\jump(z,\nbound )=1$, $z\in \pbound $, implies  that the operator $P_0 $ has to be defined only  on $L^1(E,m)$ and the operator $P_{\partial}$   on $L^1(\pbound,m^+)$.

\begin{remark}
Note that one of the standard assumptions in \cite{davis84, davis93} about the process $X=\{X(t)\}_{t\ge 0}$ is the following condition
\begin{equation}\label{c:finnt}
\mathbb{E}_x (N_t)<\infty, \quad x\in E, t>0,\quad \text{where}\quad N_t=\sup\{n:\tau_n\le t\}.
\end{equation}
It implies that in every finite time interval there is a finite number of jump times $\tau_n$ and that
$\mathbb{P}_x(\tau_n\to \infty)=1$ for all $x\in E$.  In particular, the process $X$ is then non-explosive and if  Assumptions~\ref{a:nons}--\ref{a:jump} hold true then the  induced semigroup $\{P(t)\}_{t\ge 0}$ is stochastic.

Assuming~\eqref{c:finnt} it follows from \cite{davis84, davis93}  that if we define
\[
v(t,x)=\mathbb{E}_x(\psi (X(t))), \quad x\in E, t\ge 0,
\]
then, for any sufficiently smooth bounded function $\psi \colon E\to \mathbb{R}$, 
the function $v$ satisfies the following Kolmogorov equation
\begin{equation}\label{d:eveqdu}
\frac{\partial v}{\partial t}=\mathfrak{X} v-\vartheta v+q\jump v, \quad \gamma^{+}v=\jump v,
\end{equation}
with initial condition $v(0,x)=\psi (x)$, $x\in E$, where the operators  $\mathfrak{X}, \jump,\gamma^{+}$ are given by
\[
\mathfrak{X}\psi (x)=\frac{d(\psi  \circ \phi_t)}{dt}\Big|_{t=0}(x),\quad x\in E, \quad \jump \psi (x)=\int_E \psi (y)\jump(x,dy),\quad  x\in E\cup \pbound,
\]
and $\gamma^{+}\psi (x)=\lim_{t\to 0}\psi (\phi_{-t}(x))$, $x\in \pbound $. It follows from \eqref{d:indss} that
\[
\int_{E}\psi (x)P(t)f(x)m(dx)=\int_E v(t,x)f(x)m(dx),\quad f\in L^1(E,m).
\]
However, this duality does not show directly the differences in the boundary  conditions in
equation~\eqref{d:eveqdu} and in the Cauchy problem \eqref{e:ACP1}--\eqref{e:ACP2}.
\end{remark}

\subsection{Invariant densities for induced semigroups}
\label{s:exiinv}
Let $(\phi,\vartheta,\mathcal{P})$ be the characteristics of the minimal process $X=\{X(t)\}_{t\ge 0}$
such  that Assumptions~\ref{a:nons}--\ref{a:jump} hold true.  In this section  we study the relationships between invariant densities of the substochastic semigroup $\{P(t)\}_{t\ge 0}$ corresponding to $(\phi,\vartheta,\jump)$ and invariant densities for the process observed   at jump  times $\tau_n$, $n\ge 0$.
First, we define a linear operator $K\colon L^1(E,m)\times L^1(\nbound,m^{-})\to   L^1(E,m)\times L^1(\nbound,m^{-})$ by
\begin{equation}\label{d:operatorK}
K (f,f_{\partial})=\big(P_0(\vartheta R_0(f,f_\partial),R_0(f,f_\partial)),P_{\partial}(\vartheta R_0(f,f_\partial),R_0(f,f_\partial))\big),
\end{equation}
where
$P_0,P_\partial$ satisfy \eqref{e:jump} and
\begin{multline}\label{d:R0p}
R_0(f,f_{\partial})(x)=\int_{0}^{\nlife(x)}e^{-\int_0^t \vartheta(\phi_{-r }(x))dr} f(\phi_{-t}(x))J_{-t}(x)dt\\ +\mathbf{1}_{\{\nlife(x)<\infty\}}e^{-\int_0^{\nlife(x)} \vartheta(\phi_{-r }(x))dr} f_\partial(\phi_{-\nlife(x)}(x))J_{-\nlife(x)}(x),\quad x\in E\cup \Gamma^{+},
\end{multline}
for nonnegative $(f,f_{\partial})\in L^1(E,m)\times L^1(\nbound ,m^{-})$.
The proof of our next result will be given in Section \ref{s:inv}.
\begin{theorem}\label{t:Kkernel}
The transition kernel $\mathcal{K} (x,\cdot)$ of the discrete-time Markov process $(X(\tau_n))_{n\ge0}$ is
given by
\begin{equation*}\label{eq:Kkernel}
\mathcal{K} (x,\F )=\int_0^{\life(x)}e^{-\int_0^s \vartheta(\phi_r(x))dr} \vartheta(\phi_s(x))\jump(\phi_s(x),\F )ds + \mathbf{1}_{\{\life(x)<\infty\}}e^{-\int_0^{\life(x)} \vartheta(\phi_r(x))dr}\jump(\phi_{\life(x)}(x),\F )
\end{equation*}
for $x\in E$, $\F \in \mathcal{B}(E)$. The operator  $K=(K_0,K_{\partial})$ as defined in \eqref{d:operatorK} is substochastic on $L^1(E,m)\times L^1(\nbound,m^{-})$ and satisfies
\begin{multline*}
\int_E \mathcal{K} (x,\F )f(x)m(dx)+\int_{\nbound}\mathcal{K} (x,\F )f_{\partial}(x)m^{-}(dx)=\int_{\F\cap E^0}K_{0}(f,f_{\partial})m(dx)+\int_{\F\cap\nbound}K_{\partial}(f,f_{\partial})(x)m^{-}(dx)
\end{multline*}
for all $\F \in  \mathcal{B}(E)$, $(f,f_{\partial})\in L^1(E,m)\times L^1(\nbound ,m^{-})$. Moreover, $K $ is stochastic, if for every $x$ with $\life(x)=\infty$ we have
\begin{equation}\label{d:inflife}
\lim_{t\to \life(x)}\int_0^{t}\vartheta(\phi_r(x))dr=\infty.
\end{equation}
\end{theorem}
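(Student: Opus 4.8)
The plan is to reduce the asserted integral identity to Assumption~\ref{a:jump} by interpreting $R_0(f,f_\partial)$ as the occupation density of the flow-with-killing up to the first jump, and then to read off both substochasticity and the stochasticity criterion from the scalar $\mathcal{K}(x,E)$. I would begin with the kernel formula, obtained by conditioning on the first jump time $\tau_1=\sigma_1$: one has $\mathcal{K}(x,\F)=\mathbb{E}_x[\jump(\phi_{\tau_1}(x),\F)]$, and by Assumption~\ref{a:varp} the law of $\sigma_1$ determined by $\Phi_x$ in \eqref{d:Phi} is, on $[0,\life(x))$, absolutely continuous with density $\vartheta(\phi_s(x))e^{-\int_0^s\vartheta(\phi_r(x))dr}$, while a forced jump contributes an atom of mass $e^{-\int_0^{\life(x)}\vartheta(\phi_r(x))dr}$ at $s=\life(x)$ when $\life(x)<\infty$. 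Integrating $\jump(\phi_s(x),\F)$ against this law gives the displayed expression.

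For the integral identity I would split $\mathcal{K}(x,\F)$ into its random-jump (integral) and forced-jump (atomic) parts and treat the four resulting pieces of $\int_E\mathcal{K}(\cdot,\F)f\,dm+\int_{\nbound}\mathcal{K}(\cdot,\F)f_\partial\,dm^-$; since all integrands are nonnegative, Tonelli applies throughout. For the random-jump piece carried by $f$ I apply \eqref{eq:T0St} with test function $\vartheta\,\jump(\cdot,\F)$ and integrate over $t\in(0,\infty)$; because $\int_0^\infty S(t)f\,dt$ is the first (interior) summand of $R_0(f,f_\partial)$ in \eqref{d:R0p}, this piece equals $\int_E\vartheta(x)\jump(x,\F)\,[\text{interior summand of }R_0](x)\,m(dx)$. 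The random-jump piece carried by $f_\partial$ is transformed by \eqref{e:eminus} and the cocycle relation \eqref{d:jacob} into the contribution of the second (boundary) summand of $R_0$, so the two random-jump pieces combine to $\int_E\vartheta(x)\jump(x,\F)R_0(f,f_\partial)(x)\,m(dx)$. The forced-jump piece carried by $f$ is handled by \eqref{e:eplus}: writing $x=\phi_{-s}(z)$ with $z\in\pbound$ gives $\phi_{\life(x)}(x)=z$ and $\life(x)=s$, turning it into $\int_{\pbound}\jump(z,\F)\,[\text{interior summand of }R_0](z)\,m^+(dz)$.

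The forced-jump piece carried by $f_\partial$ is the delicate one, since it is an integral over $\nbound$ that must be matched with the boundary value of $R_0$ on $\pbound$. To this end I would first record the flow-transport identity between the two boundary measures,
\[
\int_{\pbound}\mathbf{1}_{\{\nlife(z)<\infty\}}G(z)\,m^+(dz)=\int_{\nbound}\mathbf{1}_{\{\life(z)<\infty\}}G(\phi_{\life(z)}(z))\,J_{\life(z)}(z)\,m^-(dz),
\]
valid for nonnegative Borel $G$, which I obtain by writing $\int_{E_+\cap E_-}g\,dm$ in the two forms \eqref{e:eplus} and \eqref{e:eminus} and letting the flow parameter concentrate at the outgoing boundary. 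Using this identity together with \eqref{d:jacob} converts the $f_\partial$ forced term into $\int_{\pbound}\jump(z,\F)\,[\text{boundary summand of }R_0](z)\,m^+(dz)$, where I also use that the trace $\gamma^{+}R_0(f,f_\partial)$ coincides with the boundary value prescribed in \eqref{d:R0p}. Summing the four pieces shows that the left-hand side of the asserted identity equals $\int_E\jump(x,\F)\,\vartheta R_0(f,f_\partial)(x)\,m(dx)+\int_{\pbound}\jump(x,\F)\,\gamma^{+}R_0(f,f_\partial)(x)\,m^+(dx)$. Choosing $\F=E$ and using $\jump(\cdot,E)\equiv1$ shows that $\vartheta R_0(f,f_\partial)$ and $\gamma^{+}R_0(f,f_\partial)$ are integrable, so Assumption~\ref{a:jump} applies with these two functions in the roles of $f$ and $f_{\partial^+}$ in \eqref{e:jump}; its right-hand side is exactly $\int_{\F\cap E^0}K_0(f,f_\partial)\,dm+\int_{\F\cap\nbound}K_\partial(f,f_\partial)\,dm^-$ by the definition \eqref{d:operatorK}.

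Positivity of $K$ is immediate from that of $P_0$, $P_\partial$ and $R_0$. For the norm bound I again take $\F=E$: direct integration gives $\mathcal{K}(x,E)=1$ when $\life(x)<\infty$ and $\mathcal{K}(x,E)=1-\lim_{t\to\infty}e^{-\int_0^t\vartheta(\phi_r(x))dr}\le1$ otherwise, so the established identity yields $\|K(f,f_\partial)\|\le\|f\|+\|f_\partial\|$, i.e.\ $K$ is substochastic. Since equality would force $\mathcal{K}(x,E)=1$ for $m$-a.e.\ $x$, the operator $K$ is stochastic precisely when $\int_0^\infty\vartheta(\phi_r(x))dr=\infty$ on $\{\life(x)=\infty\}$, which is exactly condition~\eqref{d:inflife}. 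I expect the flow-transport identity between $m^+$ and $m^-$ in the third paragraph to be the main obstacle, as it requires a careful change of variables linking $\nbound$ and $\pbound$ along the flow together with a localization argument at the boundary.
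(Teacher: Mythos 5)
Your proposal is correct, but it follows a genuinely more direct route than the paper. The paper never computes at $\lambda=0$ directly: it defines $K$ abstractly as the resolvent limit $\lim_{\lambda\downarrow 0}\mathcal{B}R(\lambda,\mathcal{A})$ in \eqref{eq:K}, gets substochasticity for free from the abstract Theorem~\ref{t:opK}, performs your four-piece duality computation once and for all at $\lambda>0$ in Lemma~\ref{l:ltr}, identifies the limit with the explicit formula \eqref{d:operatorK} by computing $R(0)f=\lim_{\lambda\to 0^+}R(\lambda,A_0)f$ and $\Psi(0)f_{\partial}$ together with their traces (using the bounds $\|\vartheta R(\lambda,A_0)f\|\le\|f\|$, $\|\gamma^{+}\Psi(\lambda)f_{\partial}\|\le\|f_{\partial}\|$ from Lemmas~\ref{r:psil} and~\ref{r:gamA0} where you instead use the $\F=E$, $\jump(\cdot,E)\equiv 1$ trick for integrability), and deduces stochasticity under \eqref{d:inflife} from strong stability of the semigroup $\{S(t)\}_{t\ge 0}$ via Theorem~\ref{t:opK}, rather than from your direct evaluation of $\mathcal{K}(x,E)$. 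Your kernel derivation by conditioning on $\tau_1$ matches the paper's derivation of $\mathcal{K}_\lambda$ in \eqref{e:Q2} specialized to $\lambda=0$, and your four pieces reproduce the proof of Lemma~\ref{l:ltr} at $\lambda=0$, so the argument goes through. Moreover, the ``flow-transport identity'' you flag as the main obstacle is not one: it is exactly the paper's \eqref{e:gammam}, quoted from \cite[Proposition 3.12]{arlottibanasiaklods07} and already used in Lemma~\ref{r:psil}, after letting $\lambda\downarrow 0$, where monotone convergence turns $e^{-\lambda\nlife(z)}$ into $\mathbf{1}_{\{\nlife(z)<\infty\}}$; no separate localization argument is needed. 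What the paper's detour through $\lambda>0$ buys is reusability: the identification of \eqref{d:operatorK} with the resolvent-limit definition \eqref{eq:K} is precisely what plugs $K$ into the abstract invariant-density machinery (Theorems~\ref{thm:exunifp} and~\ref{thm:exunifpi}) used to prove Theorems~\ref{c:invsem} and~\ref{c:invconv}, so with your direct proof that identification would still have to be established separately. Two small cautions: your ``precisely when'' for stochasticity should be weakened to the stated ``if'' (or phrased $m$- and $m^-$-a.e.), and the evaluation $\mathcal{K}(x,E)=1$ for $\life(x)<\infty$ silently uses the continuity of $t\mapsto\int_0^t\vartheta(\phi_r(x))dr$ at $t=\life(x)$, which is supplied by Assumption~\ref{a:varp}.
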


If $(f,f_{\partial})\in L^1(E,m)\times L^1(\nbound,m^{-})$ is nonnegative with norm $1$ and  $K (f,f_{\partial})=(f,f_{\partial})$, then $(f,f_{\partial})$ is called an \emph{invariant density for the operator} $K $.
We have the following result, corresponding to \cite[Theorem 2]{costa90} and \cite[Theorem 34.31]{davis93}.
\begin{theorem}\label{c:invsem}
Suppose that $(f_{},f_{\partial})$ is an invariant density for the operator $K $ such that
\begin{equation}\label{a:intinv}
c:=\int_E\int_0^{\life(x)}e^{-\int_0^t\vartheta(\phi_r(x))dr}dtf(x)m(dx)+\int_{\nbound}\int_0^{\life(x)}
e^{-\int_0^t\vartheta(\phi_r(x))dr}dtf_{\partial}(x)m^{-}(dx)<\infty.
\end{equation}
Let
\begin{equation}\label{d:invsemi}
\overline{f}(x)=\int_{0}^{\nlife(x)}e^{-\int_0^t \vartheta(\phi_{-r }(x))dr} f(\phi_{-t}(x))J_{-t}(x)dt +\mathbf{1}_{\{\nlife(x)<\infty\}}e^{-\int_0^{\nlife(x)} \vartheta(\phi_{-r }(x))dr} f_\partial(\phi_{-\nlife(x)}(x))J_{-\nlife(x)}(x).
\end{equation}
Then
${f}_*=c^{-1}\overline{f}$  is an invariant density for the semigroup $\{P(t)\}_{t\ge 0}$.
\end{theorem}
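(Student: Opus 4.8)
The plan is to show that the unnormalized function $\overline{f}$ from \eqref{d:invsemi} lies in the kernel of the generator $G$ of the semigroup $\{P(t)\}_{t\ge 0}$ corresponding to $(\phi,\vartheta,\jump)$, since $G\overline{f}=0$ immediately yields $P(t)\overline{f}=\overline{f}$ for all $t>0$. By Theorem~\ref{th:ptsmal} the operator $G$ is an extension of $(A_{\Psi}+B,\mathcal{D}(A_{\Psi}))$, so it suffices to verify that $\overline{f}\in\mathcal{D}(A_{\Psi})$ and that $(A_{\Psi}+B)\overline{f}=0$; the constant $c$ will then turn out to be exactly $\|\overline{f}\|$, so that $f_*=c^{-1}\overline{f}$ is a bona fide density. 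Throughout I write $\overline{f}=R_0(f,f_\partial)$, noting that \eqref{d:invsemi} is the restriction to $E$ of the operator $R_0$ from \eqref{d:R0p}.

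First I would identify $\overline{f}$ as the solution operator of an inhomogeneous boundary value problem, namely
\[
\Tm\overline{f}-\vartheta\overline{f}=A\overline{f}=-f,\qquad \gamma^{-}\overline{f}=f_\partial .
\]
Comparing the first term of \eqref{d:invsemi} with \eqref{d:ss0}, that term equals $\int_0^\infty S(t)f\,dt$; by the characteristic representation of the flow-induced semigroup (the trace results of~\ref{s:appen} and \cite{arlottibanasiaklods09}) it belongs to $\mathcal{D}_{\max}$, satisfies $A(\cdot)=-f$, and has vanishing incoming trace. The boundary term is the characteristic solution of $Au=0$ carrying the incoming datum $f_\partial$: evaluating the limit in \eqref{d:traces} along $x=\phi_s(z)$ with $z\in\nbound$ and using the cocycle identity \eqref{d:jacob} in the form $J_{-s}(\phi_s(z))J_s(z)=1$, the exponential tends to $1$ as $s\to 0^{+}$ and one obtains $\gamma^{-}$ of this term equal to $f_\partial$. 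Moreover, because $K(f,f_\partial)$ is well defined (Theorem~\ref{t:Kkernel}), the two arguments $\vartheta\overline{f}=\vartheta R_0(f,f_\partial)$ and $\gamma^{+}\overline{f}=R_0(f,f_\partial)|_{\pbound}$ appearing in \eqref{d:operatorK} lie in $L^1(E,m)$ and $L^1(\pbound,m^{+})$, respectively; together with $\gamma^{-}\overline{f}=f_\partial\in L^1(\nbound,m^{-})$ this shows $\overline{f}\in\mathcal{D}$.

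Next I would feed in the invariance hypothesis $K(f,f_\partial)=(f,f_\partial)$. Unwinding \eqref{d:operatorK} with $\overline{f}=R_0(f,f_\partial)$, this is exactly the pair $P_0(\vartheta\overline{f},\gamma^{+}\overline{f})=f$ and $P_{\partial}(\vartheta\overline{f},\gamma^{+}\overline{f})=f_\partial$, which in the notation of \eqref{d:part} read $B\overline{f}=f$ and $\Psi\overline{f}=f_\partial$. Combining $\Psi\overline{f}=f_\partial$ with $\gamma^{-}\overline{f}=f_\partial$ from the previous step gives $\gamma^{-}\overline{f}=\Psi\overline{f}$, so that $\overline{f}\in\mathcal{D}(A_{\Psi})$, and then $(A_{\Psi}+B)\overline{f}=A\overline{f}+B\overline{f}=-f+f=0$. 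Since $G$ extends $A_{\Psi}+B$, we conclude $G\overline{f}=0$ and hence $P(t)\overline{f}=\overline{f}$ for every $t>0$. For the normalization, taking $\psi\equiv 1$ in \eqref{eq:T0St} and integrating in $t$ converts the first term of $\int_E\overline{f}\,dm$ into $\int_E\int_0^{\life(x)}e^{-\int_0^t\vartheta(\phi_r(x))dr}\,dt\,f(x)m(dx)$, while the change of variables \eqref{e:eminus} (again with the cocycle cancellation) turns the boundary term into $\int_{\nbound}\int_0^{\life(x)}e^{-\int_0^t\vartheta(\phi_r(x))dr}\,dt\,f_\partial(x)m^{-}(dx)$; their sum is precisely the constant $c$ of \eqref{a:intinv}. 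As $\overline{f}\ge 0$ (all kernels in \eqref{d:R0p} are nonnegative and $f,f_\partial\ge0$), the element $f_*=c^{-1}\overline{f}$ is nonnegative of unit norm, i.e. an invariant density.

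The main obstacle is the first step: rigorously establishing that $\overline{f}\in\mathcal{D}_{\max}$ with $\Tm\overline{f}-\vartheta\overline{f}=-f$ and incoming trace $\gamma^{-}\overline{f}=f_\partial$, i.e. the identification of $R_0$ as the solution operator for $Au=-f$, $\gamma^{-}u=f_\partial$. This rests on the Duhamel-type characteristic representation of the flow-induced semigroup and on the existence and values of the traces $\gamma^{\pm}$ on $\mathcal{D}_{\max}$ developed in~\ref{s:appen}. Once this identification is secured, the invariance of $K$ translates mechanically into the two algebraic identities $B\overline{f}=f$ and $\Psi\overline{f}=f_\partial$ that force $\overline{f}$ into $\Ker(A_{\Psi}+B)\subseteq\Ker G$, and the remaining normalization is a routine Fubini computation.
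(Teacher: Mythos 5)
Your proposal is correct in substance, and it takes a genuinely more concrete route than the paper. The paper's own proof of Theorem~\ref{c:invsem} is a short reduction: it identifies $\overline{f}=R_0(f,f_{\partial})$ with the monotone limit $\sup_{\lambda>0}\bigl(R(\lambda,A_0)f+\Psi(\lambda)f_{\partial}\bigr)$ of \eqref{eq:esd}, checks via \eqref{e:U0lambda} and \eqref{d:Ul0} that hypothesis \eqref{a:intinv} gives $\overline{f}\in L^1(E,m)$ with $\|\overline{f}\|=c$ (your closing Fubini computation is exactly this step), and then invokes the abstract Theorem~\ref{thm:exunifp}. That abstract proof has the same skeleton as yours but differs at the two points where your argument is delicate. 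First, where you establish $\overline{f}\in\mathcal{D}$, $A\overline{f}=-f$ and $\gamma^{-}\overline{f}=f_{\partial}$ by characteristic and trace computations --- the step you rightly flag as the main obstacle --- the paper gets all of this in one stroke: the family $f_{\lambda}=R(\lambda,A_0)f+\Psi(\lambda)f_{\partial}$ increases to $\overline{f}$, hence converges in $L^1$ once $\overline{f}\in L^1$, and since $\mathcal{A}(f_{\lambda},0)=(\lambda f_{\lambda}-f,-f_{\partial})\to(-f,-f_{\partial})$, closedness of $\mathcal{A}$ yields membership, the action, and the incoming trace simultaneously; adopting this would cleanly fill the gap you flagged without any appendix-level trace analysis. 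Second, in the abstract setting the paper cannot conclude $B\overline{f}=f$ and $\Psi\overline{f}=f_{\partial}$ exactly --- positivity of $B$ and $\Psi$ only gives $B\overline{f}\ge f$ and $\Psi\overline{f}\ge f_{\partial}$, since these operators need not commute with the monotone limit in \eqref{eq:K} --- so it patches with the mass-balance inequality \eqref{eq:zero}, which forces $\Psi\overline{f}=\Psi_0\overline{f}$ and $G\overline{f}\ge 0$, and then uses the contraction property of $P(t)$ to upgrade $P(t)\overline{f}\ge\overline{f}$ to equality. You instead exploit the concrete identification of $K$ through $R_0$ in Theorem~\ref{t:Kkernel}, together with the trace identity $\gamma^{+}\overline{f}=R_0(f,f_{\partial})|_{\pbound}$, to read the invariance $K(f,f_{\partial})=(f,f_{\partial})$ directly as the exact identities $B\overline{f}=f$ and $\Psi\overline{f}=f_{\partial}$, hence $G\overline{f}=0$; this is a strictly stronger intermediate conclusion that makes the final step immediate and dispenses with the contraction trick. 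Two bookkeeping points to make explicit in a final write-up: $\overline{f}\in L^1(E,m)$ is needed already for $\overline{f}\in\mathcal{D}_{\max}$, so the norm computation $\|\overline{f}\|=c<\infty$ should come first, and the integrability of $\vartheta\overline{f}$ and $\gamma^{+}\overline{f}$ is supplied by the uniform bounds $\|\vartheta R(\lambda,A_0)f\|\le\|f\|$, $\|\gamma^{+}\Psi(\lambda)f_{\partial}\|\le\|f_{\partial}\|$ of Lemmas~\ref{r:psil} and~\ref{r:gamA0} passed to the limit $\lambda\downarrow 0$, as in the proof of Theorem~\ref{t:Kkernel}. Neither is a gap, only a matter of ordering.
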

The proof of Theorem \ref{c:invsem} will be given in Section~\ref{s:inv}.
To relate our result to  \cite[Theorem 2]{costa90} and \cite[Theorem 34.31]{davis93} observe that if $(f_{},f_{\partial})$ is an invariant density for the operator $K $ then the probability measure $\pi$ defined~by
\begin{equation}\label{d:mus}
\pi(\F )=\int_{\F \cap E^0}f(x)m(dx)+\int_{\F \cap \nbound }f_{\partial}(x)m^{-}(dx),\quad \F \in \mathcal{B}(E),
\end{equation}
is invariant for the discrete-time process $(X(\tau_n))_{n\ge 0}$, since it satisfies
\begin{equation*}
\int_{E}\mathcal{K} (x,\F )\pi(dx)=\pi(\F ),\quad \F \in \mathcal{B}(E),
\end{equation*}
by Theorem~\ref{t:Kkernel}. In the proof of Theorem~\ref{c:invsem} we  show in fact that
\begin{equation}\label{d:norma}
\int_{\F}\overline{f} dm=\int_E\int_0^{\life(x)}e^{-\int_0^t\vartheta(\phi_r(x))dr}\mathbf{1}_{\F}(\phi_t(x))dt\,\pi(dx),\quad \F \in \mathcal{B}(E).
\end{equation}
Thus assumption \eqref{a:intinv}  is as in \cite[Theorem 2]{costa90} and \cite[Theorem 34.31]{davis93}, as well as the invariant measure for the process $\{X(t)\}_{t\ge 0}$ being of the form
\[
\mu(\F )=
\frac{\int_E\int_0^{\life(x)}e^{-\int_0^t\vartheta(\phi_r(x))dr}\mathbf{1}_{\F}(\phi_t(x))dt\,\pi(dx)}
{\int_E\int_0^{\life(x)}e^{-\int_0^t\vartheta(\phi_r(x))dr}dt\,\pi(dx)}=\int_{\F} {f}_*(x)m(dx),\quad \F \in \mathcal{B}(E).
\]
However, we additionally obtained that the measure $\mu$  is absolutely continuous with respect to $m$.

We have also the following converse result. It corresponds to \cite[Theorem 1]{costa90} and \cite[Theorem 34.21]{davis93} and its proof will be given  in Section~\ref{s:inv}.

\begin{theorem}\label{c:invconv}
Suppose that ${f}_*$ is an invariant density for the semigroup $\{P(t)\}_{t\ge 0}$ and that $K $ is stochastic.
Then
\[
0<c_*:=\int_E \vartheta(x){f}_*(x)m(dx)+\int_{\pbound }\gamma^{+} {f}_*(x)m^{+}(dx)<\infty
\]
and the operator $K $ has an invariant density
$(f_{},f_{\partial})$ given by
\begin{equation}\label{d:invdens}
f=c_{*}^{-1}P_0(\vartheta f_*,\gamma^{+} f_{*}), \quad f_{\partial}=c_{*}^{-1}P_{\partial}(\vartheta f_*,\gamma^{+} f_{*}).
\end{equation}
\end{theorem}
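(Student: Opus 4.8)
The plan is to reduce the entire statement to the single fixed-point identity
\[
R_0(Bf_*,\Psi f_*)=f_* .
\]
First I would record three elementary facts. By \eqref{d:part} the pair appearing in \eqref{d:invdens} is exactly $(Bf_*,\Psi f_*)$, where in the second slot of $P_0,P_\partial$ one reads the $\pbound$-restriction of $R_0(\cdot)$ as $\gamma^{+}R_0(\cdot)$; taking $F=E$ in \eqref{e:jump} and using $\jump(x,E)=1$ shows that the map $(g,g_\partial)\mapsto\big(P_0(g,g_\partial),P_\partial(g,g_\partial)\big)$ preserves the norm of nonnegative arguments and that $\|(Bf_*,\Psi f_*)\|=c_*$. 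Hence, once $0<c_*<\infty$ is known, $(f,f_\partial)=c_*^{-1}(Bf_*,\Psi f_*)$ is automatically nonnegative of norm one. Second, comparing \eqref{d:operatorK} with \eqref{d:part} gives $K(g,g_\partial)=\big(BR_0(g,g_\partial),\Psi R_0(g,g_\partial)\big)$ whenever $R_0(g,g_\partial)\in\mathcal{D}$, so the displayed identity forces $K(Bf_*,\Psi f_*)=(Bf_*,\Psi f_*)$, i.e. invariance of $(f,f_\partial)$. Third, positivity of $c_*$ is then free: if $c_*=0$ then $\vartheta f_*=0$ and $\gamma^{+}f_*=0$, whence $(Bf_*,\Psi f_*)=(0,0)$, and the identity gives $f_*=R_0(0,0)=0$, contradicting $\|f_*\|=1$.

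It remains to prove the identity together with $c_*<\infty$. Invariance means $f_*\in\mathcal{D}(G)$ and $Gf_*=0$, but Theorem~\ref{th:ptsmal} only guarantees that $G$ extends $A_\Psi+B$; the first and main task is therefore to place $f_*$ in the concrete domain $\mathcal{D}(A_\Psi)$. This is exactly where the hypothesis that $K$ is stochastic is used. Since $\jump(x,E)=1$ the jump step loses no mass, and $K$ stochastic excludes the only remaining source of loss, escape to infinity along the flow (cf.\ \eqref{d:inflife}); by the characterization of Section~\ref{s:charg} the semigroup $\{P(t)\}_{t\ge0}$ is then stochastic and $\mathcal{D}(G)=\mathcal{D}(A_\Psi)$. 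In particular $f_*\in\mathcal{D}(A_\Psi)\subset\mathcal{D}$, which already yields $\vartheta f_*\in L^1(E,m)$ and, by Theorem~\ref{l:Green0}, $\gamma^{+}f_*\in L^1(\pbound,m^{+})$, so that $c_*<\infty$.

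With $f_*\in\mathcal{D}(A_\Psi)$ the relation $Gf_*=0$ becomes the stationary boundary value problem $-Af_*=Bf_*$, $\gamma^{-}f_*=\Psi f_*$. On the other hand $R_0$ solves precisely this problem: by Theorem~\ref{t:gs0t} and \eqref{d:ss0} the first term of \eqref{d:R0p} is $\int_0^{\infty}S(t)h\,dt=(-A_0)^{-1}h$, solving $-Au=h$ with $\gamma^{-}u=0$, while the boundary term solves $-Au=0$ with incoming trace $h_\partial$; thus $u=R_0(h,h_\partial)$ satisfies $-Au=h$, $\gamma^{-}u=h_\partial$. Because $K$ is stochastic, \eqref{d:inflife} forces $\|S(t)u\|\to0$ for every $u$, so $\ker A_0=\{0\}$ and this problem has a unique solution in $\mathcal{D}$; applying uniqueness to $(h,h_\partial)=(Bf_*,\Psi f_*)$ gives the identity. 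The genuinely delicate point is the implication $Gf_*=0\Rightarrow f_*\in\mathcal{D}(A_\Psi)$; should one prefer to avoid invoking Section~\ref{s:charg}, the identity can instead be obtained directly by applying $(\vartheta\,\cdot\,,\gamma^{+}\cdot)$ to $\lambda R(\lambda,G)f_*=f_*$, expanding $R(\lambda,G)$ through the perturbation series of Section~\ref{s:pert}, and letting $\lambda\downarrow0$: Green's identity \eqref{e:Afin0} gives $\|(\vartheta R(\lambda,A_0)f_*,\gamma^{+}R(\lambda,A_0)f_*)\|=1-\lambda\|R(\lambda,A_0)f_*\|\le1$, so the residual term $\lambda(\vartheta R(\lambda,A_0)f_*,\gamma^{+}R(\lambda,A_0)f_*)$ tends to $0$ and the monotone limit of the expansion returns the fixed point, finiteness of $c_*$ being again supplied by $K$ stochastic.
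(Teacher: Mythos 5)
Your main route has a genuine gap at exactly the step you yourself flag as delicate. You claim that because $K$ is stochastic, ``by the characterization of Section~\ref{s:charg} the semigroup $\{P(t)\}_{t\ge0}$ is then stochastic and $\mathcal{D}(G)=\mathcal{D}(A_\Psi)$.'' Neither implication holds. By Theorem~\ref{t:opK} (with \eqref{eq:zeroo}, which Theorem~\ref{th:ptsmall} supplies), stochasticity of $K$ is equivalent to strong stability of the unperturbed semigroup $\{S(t)\}_{t\ge0}$ --- probabilistically, that a \emph{single} jump step loses no mass. It says nothing about accumulation of jump times: the process may still explode, in which case $\{P(t)\}_{t\ge0}$ is strictly substochastic and $G$ is a proper extension of $A_\Psi+B$. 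Stochasticity of $\{P(t)\}_{t\ge0}$ is governed by \eqref{eq:stoch}, i.e.\ $\mathbb{E}_x(e^{-\lambda\tau_n})\to0$, which is the non-explosion condition and does not follow from $K$ stochastic (for an explosive process $\mathbb{E}_x(e^{-\lambda\tau_n})\to\mathbb{E}_x(e^{-\lambda\tau_\infty})>0$ even though each jump conserves mass); and the identification $\mathcal{D}(G)=\mathcal{D}(A_\Psi)$ requires the extra spectral conditions \eqref{c:rpsilam0} and \eqref{eq:sspert} of Corollaries~\ref{c:both} and~\ref{c:stochastic}, not mere stochasticity. (Invariance $P(t)f_*=f_*$ only rules out mass loss \emph{started from} $f_*$, not for the semigroup.) Since the theorem is stated deliberately without any non-explosiveness hypothesis, you cannot place $f_*$ in $\mathcal{D}(A_\Psi)$ this way, and the stationary problem $-Af_*=Bf_*$, $\gamma^{-}f_*=\Psi f_*$ on which your uniqueness argument rests has no starting point. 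A secondary defect in the same argument: you treat $R_0$ as a solution operator, but $R(0)h=\lim_{\lambda\downarrow0}R(\lambda,A_0)h$ and $\Psi(0)h_\partial$ need not be integrable (the paper says so explicitly in the proof of Theorem~\ref{t:Kkernel}), so the ``unique solution in $\mathcal{D}$'' reasoning is unavailable until one knows $R_0(Bf_*,\Psi f_*)\in L^1(E,m)$ --- which is essentially the conclusion.

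Your fallback paragraph is, in outline, the paper's actual proof: the paper expands $\lambda R(\lambda,G)f_*=f_*$ through the series \eqref{d:resg}, obtaining an increasing sequence $f_N\uparrow f_*$ with $f_N\in\mathcal{D}$, applies the Green-type identities of Lemmas~\ref{r:psil} and~\ref{r:gamA0} together with Assumption~\ref{a:jump} to get
\begin{equation*}
\int_{\pbound}\gamma^{+}f_N\,dm^{+}+\int_E\vartheta f_N\,dm\le\int_{\pbound}\gamma^{+}f_{N-1}\,dm^{+}+\int_E\vartheta f_{N-1}\,dm,
\end{equation*}
and then invokes the abstract Theorem~\ref{thm:exunifpi} (closedness of $\mathcal{A}$, strong stability) to conclude $f_*\in\mathcal{D}$, $0<c_*<\infty$ and invariance. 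But as sketched, your fallback misattributes the key estimates: your single Green identity controls only the first term of the series, and finiteness of $c_*$ is \emph{not} ``supplied by $K$ stochastic'' --- it comes from the recursion above applied to every partial sum plus monotone convergence $\vartheta f_N\uparrow\vartheta f_*$, $\gamma^{+}f_N\uparrow\gamma^{+}f_*$. What $K$ stochastic actually buys is strong stability of $\{S(t)\}_{t\ge0}$, hence (i) $\lambda R(\lambda,A_0)f_*\to0$, so the residual in the fixed-point identity vanishes, and (ii) $c_*>0$, since $c_*=0$ would make $f_*=\lambda R(\lambda,A_0)f_*$ an invariant density of $\{S(t)\}_{t\ge0}$, contradicting stability. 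Fleshed out with these corrections the fallback recovers the paper's argument; the main route should be discarded.
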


In particular, in the setting of Theorem \ref{c:invconv}, the invariant measure $\pi$ as defined in \eqref{d:mus} with $f$ and $f_{\partial}$ given by \eqref{d:invdens} now satisfies
\[
\pi (\F )=\frac{\int_E\jump(x,\F ) \vartheta(x) f_*(x)m(dx)+\int_{\pbound }\jump(x,\F )\gamma^{+}f_*(x)m^{+}(dx)}{\int_E \vartheta(x)f_*(x)m(dx)+\int_{\pbound }\gamma^{+} f_*(x)m^{+}(dx)},\quad \F \in \mathcal{B}(E).
\]
This formula agrees with the one in \cite[Theorem 34.21]{davis93} where in equation (34.23) the boundary measure $\sigma$ is given by $\sigma(dx)=\gamma^{+} f_*(x)m^{+}(dx)$.

\section{Perturbation theorem for substochastic semigroups}\label{s:pert}

In this section we combine the perturbation methods of Kato \cite{kato54} and Greiner~\cite{greiner} to obtain substochastic semigroups by perturbing both the generator of a substochastic semigroup as well as  boundary conditions. For the perturbation theory of operator semigroups we refer the reader to \cite[Chapter III]{engelnagel00} and \cite{banasiakarlotti06}.
A number of  perturbation results with unbounded perturbations of boundary conditions has been obtained recently in \cite{adler2014perturbation,adler2017perturbation,rhandi15}.
Our generation theorem is stated in Section~\ref{s:ibp} and it gives sufficient conditions for the existence of a substochastic semigroup with generator being an extension of the given operator.  The proof is given
by adapting the ideas of Kato~\cite{kato54} to our setting. Since our generation theorem does not give the full characterization
of the generator, we present sufficient conditions for the given operator to be the generator in Section~\ref{s:charg}.  Finally, in Section~\ref{s:dens} we extend  results from \cite[Section 3]{biedrzyckatyran}  that will be used in the sequel to prove Theorems~\ref{t:Kkernel}--\ref{c:invconv}.

\subsection{Inner and boundary perturbations}\label{s:ibp}

Let $(E,\mathcal{E},m)$ be a $\sigma$-finite
measure space and $L^1=L^1(E,m)$. 
We assume that there is a second $L^1$ space denoted by $L^1_{\partial}=L^1(E_{\partial},m_{\partial})$, where $(E_{\partial},\mathcal{E}_{\partial},m_{\partial})$ is a $\sigma$-finite measure space; it will serve here as the boundary space. Let $\mathcal{D}$ be a linear subspace of $ L^1$. We consider a linear operator $A\colon \mathcal{D}\to L^1$, called the \emph{maximal operator} in the sense that it has a sufficiently big domain, a positive operator $B\colon \mathcal{D}\to L^1$  and two linear positive operators
$\Psi_0,\Psi\colon \mathcal{D}\to L^1_{\partial}$,  called boundary operators.

We assume throughout this section that
\begin{enumerate}[(i)]
\item\label{S1}  the operator $(A_0,\mathcal{D}(A_0))$ defined by 
\begin{equation*}\label{d:dombou0}
A_0f=Af, \quad f\in\mathcal{D}(A_0)=\{f\in \mathcal{D}: \Psi_0f=0\},
\end{equation*}
is the generator of a substochastic semigroup on $L^1$;
\item\label{S2} if $\Psi_0\not\equiv 0$ then for each $\lambda>0$ the operator $\Psi_0$ restricted to the nullspace $\Ker(\lambda -A)=\{f\in \mathcal{D}:\lambda f-Af=0\}$ has a \emph{positive right inverse}, i.e. there exists a positive operator $\Psi(\lambda)\colon L^1_{\partial} \to \Ker(\lambda-A)$ such that $\Psi_0\Psi(\lambda)f_{\partial}=f_{\partial}$ for $f_{\partial}\in L^1_{\partial}$;

\item\label{S4} for each nonnegative  $f\in \mathcal{D}$ the following holds
\begin{equation}\label{eq:zero}
\int_{E}(Af+Bf)\,dm+\int_{E_{\partial}}(\Psi f-\Psi_0f) \,dm_{\partial}\le 0.
\end{equation}
\end{enumerate}

We can now formulate our generalization of Kato's and Greiner's results.
\begin{theorem}\label{t:gmain} Assume conditions \eqref{S1}--\eqref{S4}. Let the operator  $(A_{\Psi},\mathcal{D}(A_\Psi))$ be defined by
\begin{equation}
\label{d:dombou}
A_{\Psi}f=Af, \quad f\in \mathcal{D}(A_{\Psi})=\{f\in \mathcal{D}: \Psi_0 f=\Psi f\}.
\end{equation}
Then there exists a  substochastic semigroup $\{P(t)\}_{t\ge 0}$ on $L^1$ with generator $(G,\mathcal{D}(G))$
being an extension of $(A_{\Psi}+B,\mathcal{D}(A_{\Psi}))$. The resolvent operator of $G$ at $\lambda>0$ is given by
\begin{equation}\label{d:resg}
R(\lambda,G)f=\lim_{N\to \infty} \sum_{n=0}^{N}(R(\lambda,A_0)B+\Psi(\lambda)\Psi)^n R(\lambda,A_0)f,\quad f\in L^1.
\end{equation}
\end{theorem}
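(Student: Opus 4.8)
The plan is to construct the semigroup $\{P(t)\}_{t\ge0}$ via its resolvent, following the Kato--Voigt strategy of building a perturbed resolvent as a norm-convergent Neumann-type series and then verifying the Hille--Yosida criteria recorded in the excerpt (density of domain, resolvent positivity, and the dissipativity inequality \eqref{e:subsem}). The candidate resolvent is exactly the operator $R_\lambda$ defined by the right-hand side of \eqref{d:resg}, namely the limit of the partial sums $S_N(\lambda)f=\sum_{n=0}^N(R(\lambda,A_0)B+\Psi(\lambda)\Psi)^nR(\lambda,A_0)f$. The whole argument splits into four tasks: (a) show each summand is well defined and nonnegative, so the partial sums increase in $N$; (b) show the limit $R_\lambda$ exists in $L^1$; (c) identify $R_\lambda$ as a pseudoresolvent and show it is injective with dense range, so that it is genuinely $R(\lambda,G)$ for a closed operator $G$; and (d) verify the generation and positivity/contraction properties and that $G$ extends $(A_\Psi+B,\mathcal{D}(A_\Psi))$.

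\textbf{Well-definedness and monotonicity.} First I would observe that by assumption \eqref{S1} the operator $A_0$ generates a substochastic semigroup, so $R(\lambda,A_0)$ exists and is a positive contraction for $\lambda>0$. Since $B$ is positive on $\mathcal{D}$ and $R(\lambda,A_0)$ maps $L^1$ into $\mathcal{D}(A_0)\subseteq\mathcal{D}$, the composition $R(\lambda,A_0)B$ makes sense as a positive operator on the range of $R(\lambda,A_0)$; similarly $\Psi(\lambda)$ from \eqref{S2} maps $L^1_\partial$ positively into $\Ker(\lambda-A)\subseteq\mathcal{D}$, so $\Psi(\lambda)\Psi$ is positive. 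Hence $H_\lambda:=R(\lambda,A_0)B+\Psi(\lambda)\Psi$ is a positive operator on an appropriate domain, each term $H_\lambda^n R(\lambda,A_0)f$ is nonnegative for $f\ge0$, and the partial sums $S_N(\lambda)f$ increase with $N$. By linearity it suffices to treat $f\ge0$.

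\textbf{Convergence --- the main obstacle.} The hard part is establishing that the increasing sequence $\{S_N(\lambda)f\}$ converges in $L^1$, i.e. that its norms stay bounded. Here is where the dissipativity inequality \eqref{eq:zero} does the work. The idea is to integrate $S_N(\lambda)f$ over $E$ and use the defining identity of the resolvent, $(\lambda-A_0)R(\lambda,A_0)=I$, together with Green's-type bookkeeping linking $\int_E A g\,dm$ to the boundary terms $\int_{E_\partial}(\Psi_0 g-\Psi g)\,dm_\partial$. Applying \eqref{eq:zero} to $g=S_N(\lambda)f\in\mathcal D$ should produce a telescoping estimate showing $\lambda\|S_N(\lambda)f\|\le\|f\|$ uniformly in $N$, so that $\|S_N(\lambda)f\|\le\lambda^{-1}\|f\|$. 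Monotone convergence in $L^1$ then yields the limit $R_\lambda f\ge0$ with $\|R_\lambda f\|\le\lambda^{-1}\|f\|$. Keeping precise track of how the $\Psi(\lambda)$-terms feed the boundary integrals and confirming that the passage to the limit stays inside $\mathcal{D}$ (closedness of $A$) is the delicate point, and I would lean on \eqref{S2} to control the boundary contributions at each stage.

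\textbf{Identification and generation.} With $R_\lambda$ in hand as a positive contraction (scaled by $\lambda^{-1}$), I would check the resolvent identity $R_\lambda-R_\mu=(\mu-\lambda)R_\lambda R_\mu$ for $\lambda,\mu>0$ --- this follows formally from the series structure and the resolvent identity for $A_0$ --- so that $\{R_\lambda\}$ is a pseudoresolvent. Injectivity of $R_\lambda$ (equivalently, triviality of the common kernel) together with density of its range then lets me define $G$ by $R_\lambda=R(\lambda,G)=(\lambda-G)^{-1}$; density of $\mathcal{D}(G)$ follows from density of the range of $R(\lambda,A_0)$. The estimate $\|\lambda R(\lambda,G)\|\le1$ for all $\lambda>0$ gives, via Hille--Yosida, that $G$ generates a contraction semigroup, and positivity of $R_\lambda$ makes it resolvent positive, hence substochastic by the criterion quoted before \eqref{e:subsem}. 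Finally, to see $G$ extends $A_\Psi+B$, I would take $f\in\mathcal{D}(A_\Psi)$ and verify directly that $R(\lambda,G)(\lambda-A_\Psi-B)f=f$ by splitting $f$ using $\Psi_0 f=\Psi f$ and recombining the first two terms of the series; this matches $\lambda f-(A_\Psi+B)f$ back to $f$ and shows $Gf=A_\Psi f+Bf$ on $\mathcal{D}(A_\Psi)$.
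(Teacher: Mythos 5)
Your proposal is correct in substance, and it takes a genuinely different route from the paper. The paper first passes to the product space $L^1\times L^1_{\partial}$, absorbing the boundary operator into the maximal operator via $\mathcal{A}(f,0)=(Af,-\Psi_0f)$ and packaging $(B,\Psi)$ as a single positive perturbation $\mathcal{B}(f,0)=(Bf,\Psi f)$, so that $R(\lambda,\mathcal{A})$ automatically incorporates $\Psi(\lambda)$ (formula \eqref{eq:RA}) and condition \eqref{eq:zero} becomes precisely the contraction bound $\|\mathcal{B}R(\lambda,\mathcal{A})g\|+\lambda\|R(\lambda,\mathcal{A})g\|\le\|g\|$ for $g\ge0$; it then runs Kato's scheme on the operators $\mathcal{A}+r\mathcal{B}$, $r\uparrow1$, obtains the semigroup as a monotone limit of the semigroups generated by their parts in $\overline{\mathcal{D}(\mathcal{A})}=L^1\times\{0\}$ (Lemma~\ref{t:geth}), and only afterwards reads off the resolvent series and projects to the first coordinate. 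You instead construct the resolvent directly in $L^1$ as a monotone Neumann-type series and invoke Hille--Yosida, with no product space and no approximating semigroups. Your central estimate is right and is the same mechanism as the paper's: for $f\ge 0$ and $g_N=S_N(\lambda)f$ one computes $Ag_N=\lambda g_N-f-Bg_{N-1}$ and $\Psi_0g_N=\Psi g_{N-1}$, so \eqref{eq:zero} yields $\lambda\|g_N\|+\bigl(\|Bg_N\|-\|Bg_{N-1}\|\bigr)+\bigl(\|\Psi g_N\|-\|\Psi g_{N-1}\|\bigr)\le\|f\|$, and positivity of $B,\Psi$ makes the telescoped terms nonnegative. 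Your route buys a more elementary, self-contained argument for this single theorem; the paper's product-space formulation buys reusable machinery (it feeds directly into Lemma~\ref{l:rlapb}, Corollary~\ref{c:stochastic} and Theorem~\ref{t:opK}).

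Three points in your sketch are imprecise but fillable with the tools at hand. First, the pseudoresolvent identity for $R_\lambda$ does not follow from the resolvent identity for $A_0$ alone: you also need Greiner's identity $\Psi(\lambda)=\Psi(\mu)+(\mu-\lambda)R(\lambda,A_0)\Psi(\mu)$, i.e.\ \eqref{eq:Psilam}, which holds because $\Ker(\lambda-A)\cap\Ker\Psi_0=\Ker(\lambda-A_0)=\{0\}$ for $\lambda\in\rho(A_0)$. Second, your justification of injectivity and dense range is off as stated: the range of $R_\lambda$ need not contain the range of $R(\lambda,A_0)$; only the inequality $R_\lambda f\ge R(\lambda,A_0)f\ge0$ for $f\ge0$ is available. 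The correct argument uses this inequality together with $\|\lambda R_\lambda f\|\le\|f\|$ and additivity of the $L^1$-norm on nonnegative elements to get $\|\lambda R_\lambda f-\lambda R(\lambda,A_0)f\|=\lambda\|R_\lambda f\|-\lambda\|R(\lambda,A_0)f\|\to0$ as $\lambda\to\infty$, hence $\lambda R_\lambda f\to f$ strongly, which gives both trivial kernel and dense range. Third, in the extension check the telescoping yields $S_N(\lambda)(\lambda-A_\Psi-B)f=f-H_\lambda^{N+1}f$ with $H_\lambda=R(\lambda,A_0)B+\Psi(\lambda)\Psi$, so you still must show $H_\lambda^{N+1}f\to0$; since $H_\lambda f=R(\lambda,A_0)(Bf)+\Psi(\lambda)(\Psi f)$, this requires the uniform series bound not only for $\sum_n H_\lambda^nR(\lambda,A_0)h$ but also for $\sum_n H_\lambda^n\Psi(\lambda)h_{\partial}$ with $h_{\partial}\ge0$, which follows by the same telescoping computation (and is automatic in the paper, both being components of $\sum_n R(\lambda,\mathcal{A})(\mathcal{B}R(\lambda,\mathcal{A}))^n$). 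Finally, a minor point: substochasticity is cleanest from contraction (Hille--Yosida) plus positivity of $P(t)$ via the exponential formula $P(t)f=\lim_{n\to\infty}\bigl(\tfrac{n}{t}R(\tfrac{n}{t},G)\bigr)^nf$, rather than from the criterion \eqref{e:subsem}, whose third condition you have not verified on all of $\mathcal{D}(G)$.
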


\begin{remark}
\begin{enumerate}[(a)]
\item
If the boundary operators are zero, i.e.  $\Psi_0=\Psi\equiv 0$,  then $A_0=A$, $\Psi(\lambda)\Psi=0$ and Theorem~\ref{t:gmain} goes back to the work of Kato~\cite{kato54}, as formulated and extended in \cite{voigt87,banasiakarlotti06}.

\item If, on the other hand, $B=0$ then Theorem~\ref{t:gmain} is a particular extension of Greiner's theorem \cite{greiner}, where it was assumed that the boundary perturbation $\Psi$ is bounded;  it can also be compared with the generation result from \cite{GMTKpositive}.

\item Note that it follows from  \cite[Lemma 1.2]{greiner} that condition \eqref{S2} holds, if $(A,\mathcal{D})$ is closed, $\Psi_0$ is onto and continuous with respect to the graph norm $\|f\|_A=\|f\|+\|Af\|$. The operators $\Psi(\lambda)$ are so called abstract Dirichlet operators \cite{adler2014perturbation,adler2017perturbation}.

\item Finally, since for $f\in  \mathcal{D}(A_{\Psi})$ we have $\Psi f-\Psi_0 f=0$, condition~\eqref{S4} implies that condition \eqref{e:subsem} holds at least for nonnegative $f\in  \mathcal{D}(A_{\Psi})$. 

\end{enumerate}
\end{remark}

Before we give the proof of Theorem \ref{t:gmain} we need to introduce some preliminary notations.
We consider the space $\mathcal{X}=L^1\times L^1_{\partial}$  with norm
\[
\|(f,f_{\partial})\|=
\int_E |f(x)|m(dx)+\int_{E_{\partial}}|f_{\partial}(x)|m_{\partial}(dx),\quad (f,f_{\partial})\in L^1\times L^1_{\partial},
\]
and we define the operators $\mathcal{A},\mathcal{B}\colon \mathcal{D}(\mathcal{A}) \to L^1\times L^1_{\partial}$ with $\mathcal{D}(\mathcal{A})=\mathcal{D} \times \{0\}$  by
\begin{equation}\label{d:AB}
\mathcal{A}(f,0)=(Af,-\Psi_{0}f)\quad \text{and}\quad \mathcal{B}(f,0)=(B f,\Psi f) \quad
\textrm{for $f\in \mathcal{D}$}.
\end{equation}
The resolvent of the operator $\mathcal{A}$ at $\lambda>0$ is given by (see e.g. \cite[Section 3.3.4]{rudnickityran17})
\begin{equation}\label{eq:RA}
R(\lambda,  \mathcal{A})(f,f_{\partial})=(R(\lambda,A_0)f+\Psi(\lambda) f_{\partial},0),\quad (f,f_{\partial})\in L^1\times L^1_{\partial}.
\end{equation}
By assumption, the operators $R(\lambda,\mathcal{A})$, $B$ and $\Psi$ are positive. Thus the operators $\mathcal{B}$ and $\mathcal{B}R(\lambda,\mathcal{A})$ are positive.
We have
\begin{equation}\label{eq:sum1}
\mathcal{B}R(\lambda,  \mathcal{A}) +\lambda R(\lambda,  \mathcal{A})=\mathcal{I} +(\mathcal{A}+\mathcal{B})R(\lambda,  \mathcal{A}),
\end{equation}
where $\mathcal{I}$ is the identity operator on  $L^1\times L^1_{\partial}$.
Since $R(\lambda,  \mathcal{A})(f,f_{\partial})\in \mathcal{D}\times \{0\}$, we use  condition~\eqref{eq:zero} to conclude that
\begin{equation*}
\|\mathcal{B}R(\lambda,  \mathcal{A})(f,f_{\partial})\|+ \|\lambda R(\lambda,  \mathcal{A})(f,f_{\partial})\|\le \|(f,f_{\partial})\| \end{equation*}
for nonnegative $f$ and $f_{\partial}$. This implies that the operators $\mathcal{B}R(\lambda,  \mathcal{A})$ and $\lambda R(\lambda,\mathcal{A})$ are positive contractions on $\mathcal{X}=L^1\times L^1_{\partial}$.
We have
\begin{equation}\label{eq:BRA}
\mathcal{B}R(\lambda,  \mathcal{A})(f,f_{\partial})=(BR(\lambda,A_0)f+B\Psi(\lambda)f_{\partial}, \Psi R(\lambda,A_0)f+\Psi\Psi(\lambda)f_{\partial})
\end{equation}
for $\lambda>0$, $(f,f_{\partial})\in L^1\times L^1_{\partial}$.

In the proof of Theorem~\ref{t:gmain} we apply the argument of Kato~\cite{kato54} to  the operator  $(\mathcal{A}+\mathcal{B},\mathcal{D}(\mathcal{A}))$  in the space $L^1\times L^1_{\partial}$.
However, the main difficulty now is that $\mathcal{D}(\mathcal{A})$ is not dense in $L^1\times L^1_{\partial}$. 
We have  $\overline{\mathcal{D}(\mathcal{A})}=L^1\times \{0\}$, since  $\mathcal{D}(A_0)\times \{0\}\subset \mathcal{D}(\mathcal{A})\subset L^1\times \{0\}$ and the domain  of the generator $A_0$ of a substochastic semigroup is dense in $L^1$.
The part of $(\mathcal{A}+\mathcal{B},\mathcal{D}(\mathcal{A}))$ in $\mathcal{X}_0=\overline{\mathcal{D}(\mathcal{A})}=L^1\times \{0\}$, denoted by $(\mathcal{A}+\mathcal{B})_{|}$  and being the restriction of $\mathcal{A}+\mathcal{B}$ to the domain
\[
\begin{split}
\mathcal{D}((\mathcal{A}+\mathcal{B})_{|})&=\{(f,f_{\partial})\in \mathcal{D}(\mathcal{A})\cap \mathcal{X}_0: (\mathcal{A}+\mathcal{B})(f,f_{\partial})\in \mathcal{X}_0\},
\end{split}
\]
can be identified with $(A_{\Psi}+B,\mathcal{D}(A_{\Psi}))$, since $\mathcal{D}((\mathcal{A}+\mathcal{B})_{|})= \mathcal{D}(A_{\Psi})\times \{0\}$
and
\[
(\mathcal{A}+\mathcal{B})_{|}(f,0)=(A_{\Psi}f+Bf,0), \quad f\in \mathcal{D}(A_{\Psi}).
\]
We make use of the following result that easily follows from \cite[Corollary II.3.21]{engelnagel00}.
\begin{lemma}\label{t:geth} Assume conditions \eqref{S1}--\eqref{S4}.
If, for each $\lambda>0$, the operator $\mathcal{I}-\mathcal{B}R(\lambda,\mathcal{A})$ is invertible with positive inverse, then
the resolvent of $\mathcal{A}+\mathcal{B}$ at $\lambda>0$ is given by
\begin{equation}\label{e:rab}
R(\lambda,\mathcal{A}+\mathcal{B})=R(\lambda,\mathcal{A})(\mathcal{I}-\mathcal{B}R(\lambda,\mathcal{A}))^{-1}
\end{equation}
and $\lambda \|R(\lambda,\mathcal{A}+\mathcal{B})\|\le 1$ for all $\lambda>0$. Moreover, the part $(\mathcal{A}+\mathcal{B})_{|}$ of the operator  $(\mathcal{A}+\mathcal{B},\mathcal{D}(\mathcal{A}))$ in $\mathcal{X}_0=\overline{\mathcal{D}(\mathcal{A})}$ is densely defined in~$\mathcal{X}_0$ and generates a $C_0$-semigroup of positive contractions on $\mathcal{X}_0$.
\end{lemma}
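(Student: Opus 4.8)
The plan is to verify that the operator $(\mathcal{A}+\mathcal{B},\mathcal{D}(\mathcal{A}))$ is a Hille--Yosida operator on the whole product space $\mathcal{X}=L^1\times L^1_{\partial}$ with constants $M=1$ and $\omega=0$, and then to invoke \cite[Corollary II.3.21]{engelnagel00}, which asserts precisely that the part of such an operator in the closure of its domain generates a $C_0$-semigroup of contractions. The non-density of $\mathcal{D}(\mathcal{A})=\mathcal{D}\times\{0\}$ in $\mathcal{X}$ is exactly the feature handled by passing to the part $(\mathcal{A}+\mathcal{B})_{|}$ in $\mathcal{X}_0=\overline{\mathcal{D}(\mathcal{A})}=L^1\times\{0\}$; once the Hille--Yosida estimate on $\mathcal{X}$ is in place, the cited corollary does the remaining work.

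First I would establish that $(0,\infty)\subseteq\rho(\mathcal{A}+\mathcal{B})$ together with the resolvent formula \eqref{e:rab}. Writing the resolvent identity $\mathcal{A}R(\lambda,\mathcal{A})=\lambda R(\lambda,\mathcal{A})-\mathcal{I}$ and adding $\mathcal{B}R(\lambda,\mathcal{A})$ reproduces \eqref{eq:sum1}, which rearranges to
\[
(\lambda-\mathcal{A}-\mathcal{B})R(\lambda,\mathcal{A})=\mathcal{I}-\mathcal{B}R(\lambda,\mathcal{A}).
\]
Since $\mathcal{I}-\mathcal{B}R(\lambda,\mathcal{A})$ is invertible by hypothesis, the operator $R(\lambda,\mathcal{A})(\mathcal{I}-\mathcal{B}R(\lambda,\mathcal{A}))^{-1}$, whose range lies in $\mathcal{D}(\mathcal{A})=\mathcal{D}(\mathcal{A}+\mathcal{B})$, is a bounded right inverse of $\lambda-\mathcal{A}-\mathcal{B}$. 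For the left inverse I would take $u\in\mathcal{D}(\mathcal{A})$, set $v=(\lambda-\mathcal{A})u$ so that $u=R(\lambda,\mathcal{A})v$, and compute $(\lambda-\mathcal{A}-\mathcal{B})u=v-\mathcal{B}R(\lambda,\mathcal{A})v=(\mathcal{I}-\mathcal{B}R(\lambda,\mathcal{A}))v$; applying $R(\lambda,\mathcal{A})(\mathcal{I}-\mathcal{B}R(\lambda,\mathcal{A}))^{-1}$ recovers $u$. Hence $\lambda-\mathcal{A}-\mathcal{B}$ is a bijection of $\mathcal{D}(\mathcal{A})$ onto $\mathcal{X}$ with bounded inverse given by \eqref{e:rab}, so $\lambda\in\rho(\mathcal{A}+\mathcal{B})$.

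Next I would prove the contraction estimate $\lambda\|R(\lambda,\mathcal{A}+\mathcal{B})\|\le 1$. By positivity of the two factors in \eqref{e:rab} the resolvent $R(\lambda,\mathcal{A}+\mathcal{B})$ is positive, and since the norm of a positive operator on an $L^1$-space is attained on nonnegative elements it suffices to take $(f,f_{\partial})\ge 0$. Put $u=R(\lambda,\mathcal{A}+\mathcal{B})(f,f_{\partial})=(h,0)\ge 0$ with $h\in\mathcal{D}$, $h\ge 0$. Pairing the identity $\lambda u-(\mathcal{A}+\mathcal{B})u=(f,f_{\partial})$ with the functional $(g,g_{\partial})\mapsto\int_E g\,dm+\int_{E_{\partial}}g_{\partial}\,dm_{\partial}$, and using $(\mathcal{A}+\mathcal{B})u=(Ah+Bh,\Psi h-\Psi_0 h)$, condition \eqref{eq:zero} makes the contribution of $(\mathcal{A}+\mathcal{B})u$ nonpositive; since on nonnegative elements this functional equals the norm, we obtain $\lambda\|u\|\le\|(f,f_{\partial})\|$. (Alternatively one can telescope the subinvariance inequality $\|\lambda R(\lambda,\mathcal{A})g\|+\|\mathcal{B}R(\lambda,\mathcal{A})g\|\le\|g\|$ along the partial sums dominated by $(\mathcal{I}-\mathcal{B}R(\lambda,\mathcal{A}))^{-1}$.)

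The two estimates give $\|(\lambda R(\lambda,\mathcal{A}+\mathcal{B}))^{n}\|\le\|\lambda R(\lambda,\mathcal{A}+\mathcal{B})\|^{n}\le 1$ for all $n$ and $\lambda>0$, i.e. $(\mathcal{A}+\mathcal{B},\mathcal{D}(\mathcal{A}))$ is a Hille--Yosida operator with $M=1$, $\omega=0$. By \cite[Corollary II.3.21]{engelnagel00} its part $(\mathcal{A}+\mathcal{B})_{|}$ in $\mathcal{X}_0=\overline{\mathcal{D}(\mathcal{A})}$ is densely defined in $\mathcal{X}_0$ and generates a $C_0$-semigroup there, which is contractive because $M=1$ and $\omega=0$; positivity of the semigroup follows from positivity of the resolvents through the exponential formula $T(t)=\lim_{n}(\tfrac{n}{t}R(\tfrac{n}{t},(\mathcal{A}+\mathcal{B})_{|}))^{n}$. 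I expect the only genuinely delicate point to be the verification that $\lambda-\mathcal{A}-\mathcal{B}$ is a bijection on the non-dense domain $\mathcal{D}(\mathcal{A})$ (the left-inverse computation together with the boundedness of $(\mathcal{I}-\mathcal{B}R(\lambda,\mathcal{A}))^{-1}$), since everything else is either a direct rearrangement of \eqref{eq:sum1}, an immediate consequence of \eqref{eq:zero}, or delegated to the cited corollary.
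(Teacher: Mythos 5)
Your proposal is correct and follows essentially the same route as the paper: the paper's proof consists precisely of the preliminary computations preceding the lemma (identity \eqref{eq:sum1} and the contraction property of $\mathcal{B}R(\lambda,\mathcal{A})$ and $\lambda R(\lambda,\mathcal{A})$ derived from \eqref{eq:zero}) followed by an appeal to \cite[Corollary II.3.21]{engelnagel00} for Hille--Yosida operators on the non-dense domain $\mathcal{D}(\mathcal{A})$. You have merely made explicit the details the paper leaves to the reader --- the two-sided inverse computation giving \eqref{e:rab}, the pairing with the integral functional yielding $\lambda\|R(\lambda,\mathcal{A}+\mathcal{B})\|\le 1$ on nonnegative elements, and the positivity of the semigroup via the Euler formula --- all of which are sound.
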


\begin{remark}\label{r:non}
\begin{enumerate}[(a)]
\item\label{r:nona} It follows from \cite[Lemma 1.3]{greiner} that given any $\lambda,\mu\in\rho(A_0)$ we have
\begin{equation}\label{eq:Psilam}
\Psi(\lambda)=\Psi(\mu)+(\mu-\lambda)R(\lambda,A_0)\Psi(\mu).
\end{equation}
Since
$R(\lambda,A_0)\Psi(\mu)\ge 0$, we see that $\Psi(\mu)\le \Psi(\lambda)$ for $\mu>\lambda$.
\item Since $\mathcal{B}R(\lambda,\mathcal{A})$ is a positive operator,  the operator $\mathcal{I}-\mathcal{B}R(\lambda,\mathcal{A})$ is invertible with positive inverse if and only if
the spectral radius of the operator $\mathcal{B}R(\lambda,\mathcal{A})$ is strictly smaller than 1,
or equivalently,
\begin{equation}\label{eq:ssper}
\lim_{n\to \infty}\|(\mathcal{B} R(\lambda,\mathcal{A}))^n\|=0.
\end{equation}
We have $\mathcal{B}R(\mu,\mathcal{A})\le \mathcal{B}R(\lambda,\mathcal{A})$ for $\mu>\lambda$ and we see that condition \eqref{eq:ssper} holds for all $\lambda$ sufficiently large, if it holds for one $\lambda>0$.
\item In Lemma \ref{t:geth} it is enough to assume that the operator $(\mathcal{A}+\mathcal{B},\mathcal{D}(\mathcal{A}))$ is resolvent positive, or equivalently, by \cite[Theorem 1.1]{voigt89}, that condition \eqref{eq:ssper} holds for one $\lambda>0$.
\end{enumerate}
\end{remark}

With these preparations  we can now turn to the
\begin{proof}[Proof of Theorem~\ref{t:gmain}]
For each $r\in [0,1)$ consider the operator $\mathcal{G}_r=\mathcal{A}+r\mathcal{B}$ with domain $\mathcal{D}(\mathcal{A})=\mathcal{D}\times \{0\}$.
Since $B$ and $\Psi$ are positive operators, we see that condition \eqref{eq:zero} still holds for the positive operators $rB$ and $r\Psi$, $r\in [0,1)$. We have $\|r\mathcal{B}R(\lambda,  \mathcal{A})\|\le r<1$ for $r\in [0,1)$. Thus, for each $\lambda>0$ and $r\in [0,1)$, the operator $\mathcal{I}-r\mathcal{B}R(\lambda,\mathcal{A})$ is invertible with positive inverse.
From Lemma~\ref{t:geth} it follows that
\[
R(\lambda,\mathcal{G}_r)=R(\lambda,\mathcal{A})\sum_{n=0}^\infty r^n (\mathcal{B}R(\lambda,\mathcal{A}))^n,
\]
$\|R(\lambda,\mathcal{G}_r)\|\le \lambda^{-1}$ and that the part ${\mathcal{G}_r}_{|}$ of
the operator $(\mathcal{G}_r,\mathcal{D}(\mathcal{A}))$ in $\overline{\mathcal{D}(\mathcal{A})}$ is the generator of a $C_0$-semigroup $\{\mathcal{P}_{r}(t)\}_{t\ge 0}$ of positive contractions  on $\overline{\mathcal{D}(\mathcal{A})}$.
Arguing as in \cite{kato54}
we conclude that the family of operators $\{\mathcal{P}(t)\}_{t\ge 0}$ defined by
\[
\mathcal{P}(t)(f,f_{\partial})=\lim_{r\to 1}\mathcal{P}_{r}(t)(f,f_{\partial}),\quad (f,f_{\partial})\in \overline{\mathcal{D}(\mathcal{A})}=L^1\times\{0\},
\]
is a $C_0$-semigroup of positive contractions on $\overline{\mathcal{D}(\mathcal{A})}$. Let $(\mathcal{G},\mathcal{D}(\mathcal{G}))$ be the generator of  $\{\mathcal{P}(t)\}_{t\ge 0}$ and $R(\lambda,\mathcal{G})$ be its resolvent at $\lambda>0$. We take
\[
R(\lambda,G)f=\Pi_1 R(\lambda,\mathcal{G})(f,0)\quad \text{and}\quad P(t)f=\Pi_1\mathcal{P}(t)(f,0),
\]
where $\Pi_1(f,f_{\partial})=f$.

Since $0\le R(\lambda,\mathcal{G}_r)\le R(\lambda,\mathcal{G}_{r'})$ for $r<r'$ and $\|R(\lambda,\mathcal{G}_r)\|\le \lambda^{-1}$, we see that the limit
\[
\mathcal{R}_\lambda(f,f_{\partial}) =\lim_{r\uparrow 1}R(\lambda,\mathcal{G}_r)(f,f_{\partial})
\]
exists for all  $(f,f_{\partial})\in L^1\times L^1_{\partial}$ and that
\begin{equation}\label{e:res}
\mathcal{R}_\lambda=\lim_{N\to\infty}R(\lambda,\mathcal{A})\sum_{n=0}^N (\mathcal{B}R(\lambda,\mathcal{A}))^n=\sum_{n=0}^{\infty} R(\lambda,\mathcal{A})(\mathcal{B}R(\lambda,\mathcal{A}))^n.
\end{equation}
We also have
\[
\lim_{r\uparrow 1}R(\lambda,{\mathcal{G}_r}_|)(f,0)=R(\lambda,\mathcal{G})(f,0),\quad  f\in L^1.
\]
Thus $R(\lambda,\mathcal{G})$ is given by the part ${\mathcal{R}_{\lambda}}_{|}$ of the operator $\mathcal{R}_\lambda$ in $L^1\times\{0\}$, where $\mathcal{R}_\lambda$ is defined by \eqref{e:res}.
Since
\[
R(\lambda,\mathcal{A})\sum_{n=0}^N (\mathcal{B}R(\lambda,\mathcal{A}))^n(\lambda \mathcal{I}-\mathcal{A})(f,0)=\mathcal{I}(f,0)+R(\lambda,\mathcal{A})\sum_{n=0}^{N-1} (\mathcal{B}R(\lambda,\mathcal{A}))^n\mathcal{B}(f,0)
\]
for all $N$, we see that
\[
\mathcal{R}_\lambda (\lambda \mathcal{I}-\mathcal{A}-\mathcal{B})(f,0)=(f,0)
\]
for $f\in \mathcal{D}$, by \eqref{e:res}. Now if $f\in \mathcal{D}(A_{\Psi})$ then $(\lambda \mathcal{I}-\mathcal{A}-\mathcal{B})(f,0)\in L^1\times \{0\}$, implying that $\mathcal{G}$ is an extension of the operator $(\mathcal{A}+\mathcal{B})_{|}$. Thus $(G,\mathcal{D}(G))$ is an extension of the operator $(A_{\Psi}+B,\mathcal{D}(A_{\Psi}))$. Finally, using the formula for $\mathcal{R}_\lambda$ and noting that
\[
R(\lambda,\mathcal{A})\mathcal{B}(f,0)=(R(\lambda,A_0)Bf+\Psi(\lambda)\Psi f,0),\quad f\in D,
\]
we conclude that \eqref{d:resg} holds true.
\end{proof}

\subsection{Characterization of the generator of the perturbed semigroup}\label{s:charg}

We use the notation from Section~\ref{s:ibp}. The operators
$\mathcal{A}$ and $\mathcal{B}$ are as in \eqref{d:AB} and  $(A_{\Psi},\mathcal{D}(A_\Psi))$ is defined by \eqref{d:dombou}.
We begin by noting that Theorem~\ref{t:gmain} together with Remark~\ref{r:non} and Lemma~\ref{t:geth} implies the following characterization.
\begin{corollary}\label{c:gen}
Assume conditions \eqref{S1}--\eqref{S4}.
If the operator  $(\mathcal{A}+\mathcal{B},\mathcal{D}(\mathcal{A}))$  is resolvent positive on $L^1\times L^1_{\partial}$
then  $(A_{\Psi}+B,\mathcal{D}(A_{\Psi}))$ is the generator of a substochastic semigroup on $L^1$.  \end{corollary}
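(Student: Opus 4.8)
The plan is to reduce the statement to Lemma~\ref{t:geth}, whose conclusion already identifies the part $(\mathcal{A}+\mathcal{B})_{|}$ in $\mathcal{X}_0=L^1\times\{0\}$ with $(A_{\Psi}+B,\mathcal{D}(A_{\Psi}))$ acting on $L^1$. The only discrepancy between the hypothesis of the corollary and that of the lemma is that the lemma asks $\mathcal{I}-\mathcal{B}R(\lambda,\mathcal{A})$ to be invertible with positive inverse for \emph{each} $\lambda>0$, while here we merely assume that $(\mathcal{A}+\mathcal{B},\mathcal{D}(\mathcal{A}))$ is resolvent positive; so the first task is to bridge this gap, exactly as anticipated in Remark~\ref{r:non}(c).

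First I would invoke the equivalence recorded in Remark~\ref{r:non}(c) (Voigt's theorem): resolvent positivity of $(\mathcal{A}+\mathcal{B},\mathcal{D}(\mathcal{A}))$ is equivalent to condition \eqref{eq:ssper} holding for a single $\lambda_0>0$, i.e. $\lim_{n\to\infty}\|(\mathcal{B}R(\lambda_0,\mathcal{A}))^n\|=0$. Next, I would use the monotonicity noted in Remark~\ref{r:non}(b), namely $\mathcal{B}R(\mu,\mathcal{A})\le\mathcal{B}R(\lambda_0,\mathcal{A})$ for $\mu>\lambda_0$, whence $(\mathcal{B}R(\mu,\mathcal{A}))^n\le(\mathcal{B}R(\lambda_0,\mathcal{A}))^n$ for every $\mu\ge\lambda_0$; taking norms extends \eqref{eq:ssper} to all $\lambda\ge\lambda_0$. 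For such $\lambda$ the spectral radius of the positive operator $\mathcal{B}R(\lambda,\mathcal{A})$ is strictly below $1$, so $\mathcal{I}-\mathcal{B}R(\lambda,\mathcal{A})$ is invertible with positive inverse $\sum_{n\ge 0}(\mathcal{B}R(\lambda,\mathcal{A}))^n$, which is precisely what Lemma~\ref{t:geth} requires (in the strengthened form of Remark~\ref{r:non}(c), where it suffices that the hypothesis hold for all large $\lambda$).

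I would then apply Lemma~\ref{t:geth} to conclude that the part $(\mathcal{A}+\mathcal{B})_{|}$ is densely defined in $\mathcal{X}_0$ and generates a $C_0$-semigroup of positive contractions on $\mathcal{X}_0=L^1\times\{0\}$, with $\lambda\|R(\lambda,\mathcal{A}+\mathcal{B})\|\le 1$. Transporting this through the isometric isomorphism $f\mapsto(f,0)$ and invoking the identifications $\mathcal{D}((\mathcal{A}+\mathcal{B})_{|})=\mathcal{D}(A_{\Psi})\times\{0\}$ and $(\mathcal{A}+\mathcal{B})_{|}(f,0)=(A_{\Psi}f+Bf,0)$ established before the lemma, I would read off that $(A_{\Psi}+B,\mathcal{D}(A_{\Psi}))$ is densely defined in $L^1$ and generates a $C_0$-semigroup of positive contractions, i.e. a substochastic semigroup; in particular it then coincides with the generator $G$ of $\{P(t)\}_{t\ge 0}$ from Theorem~\ref{t:gmain}, so that the extension there is not proper. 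As a consistency check, condition~\eqref{e:subsem} for this generator is just \eqref{eq:zero} specialized to $f\in\mathcal{D}(A_{\Psi})$, where $\Psi f-\Psi_0 f=0$, in agreement with Remark~(d) following Theorem~\ref{t:gmain}.

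The argument is mostly an assembly of results already proved, so I anticipate no genuine obstacle; the single delicate point is the $\lambda$-range bookkeeping---ensuring that resolvent positivity, which a priori concerns only a right half-line, does deliver the invertibility of $\mathcal{I}-\mathcal{B}R(\lambda,\mathcal{A})$ for all sufficiently large $\lambda$ via the monotonicity of $\mathcal{B}R(\lambda,\mathcal{A})$, and that the part operator's resolvent is the restriction of $R(\lambda,\mathcal{A}+\mathcal{B})$ to $\mathcal{X}_0$, so that positivity and the contraction bound descend correctly to $L^1$.
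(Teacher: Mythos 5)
Your proposal is correct and follows essentially the same route as the paper, which deduces the corollary precisely by combining Theorem~\ref{t:gmain}, Remark~\ref{r:non} (Voigt's equivalence and the monotonicity of $\lambda\mapsto\mathcal{B}R(\lambda,\mathcal{A})$ extending \eqref{eq:ssper} to all large $\lambda$), and Lemma~\ref{t:geth} with the identification of the part $(\mathcal{A}+\mathcal{B})_{|}$ with $(A_{\Psi}+B,\mathcal{D}(A_{\Psi}))$. Your extra bookkeeping on the $\lambda$-range and the observation that $G$ then coincides with $A_{\Psi}+B$ are both sound.
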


We need the following lemma giving conditions for the operator $(A_{\Psi},\mathcal{D}(A_\Psi))$ 
to be resolvent positive.

\begin{lemma}\label{l:APsi} Assume conditions \eqref{S1}--\eqref{S4}. Let  $\lambda>0$.
Then  $\lambda\in\rho(A_{\Psi})$ if and only if the operator $ I_{\partial }-\Psi \Psi(\lambda)$ is invertible, where $I_{\partial }$ is the identity operator on $L^1_{\partial}$. In that case, the resolvent operator of $(A_{\Psi},\mathcal{D}(A_{\Psi}))$ at $\lambda$ is given by
\begin{equation}\label{eq:resbou}
 R(\lambda,A_{\Psi})f=(I+\Psi(\lambda)(I_{\partial }-\Psi \Psi(\lambda))^{-1}\Psi)R(\lambda,A_0)f,\quad f\in L^1.
\end{equation}
Moreover,
\[
\|\lambda R(\lambda,A_{\Psi})\|\le 1\quad \text{and}\quad \|BR(\lambda,A_{\Psi})\|\le 1.
\]
\end{lemma}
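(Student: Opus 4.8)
The plan is to solve the resolvent equation $(\lambda-A_{\Psi})f=g$ explicitly and to read off exactly when this is possible. Two preliminary observations set things up. Since $\lambda>0$ lies in $\rho(A_0)$ by \eqref{S1}, the restriction of $\Psi_0$ to $\Ker(\lambda-A)$ is not only surjective (it has the right inverse $\Psi(\lambda)$ from \eqref{S2}) but also injective: if $h\in\Ker(\lambda-A)$ with $\Psi_0h=0$, then $h\in\mathcal{D}(A_0)$ and $(\lambda-A_0)h=0$, so $h=0$. Hence $\Psi(\lambda)$ is a genuine bijection of $L^1_{\partial}$ onto $\Ker(\lambda-A)$ with two-sided inverse $\Psi_0|_{\Ker(\lambda-A)}$, and every $f\in\mathcal{D}$ decomposes uniquely as $f=R(\lambda,A_0)(\lambda f-Af)+\Psi(\lambda)\Psi_0 f$. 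Second, from \eqref{eq:BRA} together with the fact, established before Lemma~\ref{t:geth}, that $\mathcal{B}R(\lambda,\mathcal{A})$ is a positive contraction, the compositions $\Psi R(\lambda,A_0)\colon L^1\to L^1_{\partial}$ and $\Psi\Psi(\lambda)\colon L^1_{\partial}\to L^1_{\partial}$ are bounded and positive with $\|\Psi\Psi(\lambda)\|\le 1$; in particular $I_{\partial}-\Psi\Psi(\lambda)$ is a bounded operator on $L^1_{\partial}$.

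Writing any solution of $(\lambda-A)f=g$, $f\in\mathcal{D}$, in the form $f=R(\lambda,A_0)g+\Psi(\lambda)f_{\partial}$ with $f_{\partial}=\Psi_0 f\in L^1_{\partial}$ free, the constraint $\Psi_0 f=\Psi f$ defining $\mathcal{D}(A_{\Psi})$ becomes, after applying $\Psi$ and using $\Psi_0\Psi(\lambda)=I_{\partial}$, the single boundary equation $(I_{\partial}-\Psi\Psi(\lambda))f_{\partial}=\Psi R(\lambda,A_0)g$. This governs everything. For the ``if'' direction, invertibility of $I_{\partial}-\Psi\Psi(\lambda)$ produces, for each $g$, exactly one admissible $f_{\partial}$ and hence exactly one $f\in\mathcal{D}(A_{\Psi})$ solving $(\lambda-A_{\Psi})f=g$; substituting back gives precisely \eqref{eq:resbou}, and boundedness of the four factors shows $R(\lambda,A_{\Psi})$ is bounded, so $\lambda\in\rho(A_{\Psi})$. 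Positivity of the resolvent comes for free: since the spectral radius of the positive operator $\Psi\Psi(\lambda)$ lies in its spectrum, invertibility of $I_{\partial}-\Psi\Psi(\lambda)$ forces the spectral radius to be $<1$, whence $(I_{\partial}-\Psi\Psi(\lambda))^{-1}=\sum_{n\ge0}(\Psi\Psi(\lambda))^n\ge0$.

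For the two norm estimates I would work directly on the range of the resolvent. Given $g\ge0$, put $f=R(\lambda,A_{\Psi})g\ge0$; then $f\in\mathcal{D}(A_{\Psi})$, so the boundary term in \eqref{eq:zero} vanishes and condition~\eqref{S4} yields $\int_E(Af+Bf)\,dm\le0$. As $Bf\ge0$, this gives $\int_E Af\,dm\le0$, and from $\lambda f-Af=g$ we obtain $\lambda\|f\|=\|g\|+\int_E Af\,dm\le\|g\|$, i.e.\ $\|\lambda R(\lambda,A_{\Psi})\|\le1$. The same inequality rearranges to $\|Bf\|=\int_E Bf\,dm\le-\int_E Af\,dm=\|g\|-\lambda\|f\|\le\|g\|$, giving $\|BR(\lambda,A_{\Psi})\|\le1$; both bounds extend from nonnegative $g$ to all of $L^1$ by positivity of $R(\lambda,A_{\Psi})$ and $B$.

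The ``only if'' direction is where the main obstacle sits. Injectivity of $I_{\partial}-\Psi\Psi(\lambda)$ is immediate: if $(I_{\partial}-\Psi\Psi(\lambda))f_{\partial}=0$, then $f=\Psi(\lambda)f_{\partial}\in\mathcal{D}(A_{\Psi})\cap\Ker(\lambda-A)$, so $(\lambda-A_{\Psi})f=0$ forces $f=0$ and hence $f_{\partial}=\Psi_0\Psi(\lambda)f_{\partial}=0$. The genuinely delicate point is surjectivity, for the boundary equation only shows a priori that the range of $I_{\partial}-\Psi\Psi(\lambda)$ contains the range of $\Psi R(\lambda,A_0)$, which need not exhaust $L^1_{\partial}$. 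To close the gap I would exploit positivity: since the spectral radius of $\Psi\Psi(\lambda)$ belongs to its spectrum and is $\le1$, failure of invertibility together with injectivity forces $1$ into the approximate point spectrum, producing $f_{\partial}^{(k)}$ with $\|f_{\partial}^{(k)}\|=1$ and $(I_{\partial}-\Psi\Psi(\lambda))f_{\partial}^{(k)}\to0$; the elements $v_k=\Psi(\lambda)f_{\partial}^{(k)}\in\Ker(\lambda-A)$ then satisfy $(\Psi_0-\Psi)v_k\to0$, and one seeks to correct them into $\mathcal{D}(A_{\Psi})$ so as to contradict that $\lambda-A_{\Psi}$ is bounded below. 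Equivalently, I would pass to the product space $\mathcal{X}=L^1\times L^1_{\partial}$ and the purely boundary perturbation $\widetilde{\mathcal{B}}(f,0)=(0,\Psi f)$, for which $\widetilde{\mathcal{B}}R(\lambda,\mathcal{A})$ is lower triangular with corner $\Psi\Psi(\lambda)$, identify $A_{\Psi}$ with the part of $\mathcal{A}+\widetilde{\mathcal{B}}$ in $\overline{\mathcal{D}(\mathcal{A})}=L^1\times\{0\}$ as in Lemma~\ref{t:geth}, and use the equivalence of Remark~\ref{r:non} between resolvent positivity of $\mathcal{A}+\widetilde{\mathcal{B}}$ and condition~\eqref{eq:ssper}; the crux, which I expect to be the hardest step, is propagating $\lambda\in\rho(A_{\Psi})$ back to the invertibility of $I_{\partial}-\Psi\Psi(\lambda)$ rather than merely the easy inclusion in the reverse direction.
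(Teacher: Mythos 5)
Most of your proposal is sound, and where it overlaps with the paper it agrees with it. Your norm estimates are obtained exactly as in the paper's proof: for $g\ge 0$ one puts $f=R(\lambda,A_{\Psi})g\in\mathcal{D}(A_{\Psi})$, notes that the boundary term in \eqref{eq:zero} vanishes, and integrates $\lambda f-Af=g$ to get $\lambda\|f\|+\|Bf\|\le\|g\|$. You even supply a detail the paper leaves implicit, namely the positivity of $R(\lambda,A_{\Psi})$, via the observation that for the positive contraction $\Psi\Psi(\lambda)$ (bounded by \eqref{eq:BRA} and the contractivity of $\mathcal{B}R(\lambda,\mathcal{A})$) invertibility of $I_{\partial}-\Psi\Psi(\lambda)$ forces the spectral radius below $1$, hence a positive Neumann inverse. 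Your ``if'' direction, through the Greiner decomposition $f=R(\lambda,A_0)(\lambda f-Af)+\Psi(\lambda)\Psi_0 f$ and the reduction of the boundary condition to $(I_{\partial}-\Psi\Psi(\lambda))f_{\partial}=\Psi R(\lambda,A_0)g$, is the standard argument; note that the paper does not prove the first part of the lemma in-text at all, but cites \cite[Section 3.3.4]{rudnickityran17} for it, proving only the norm bounds. Your injectivity argument for $I_{\partial}-\Psi\Psi(\lambda)$ in the ``only if'' direction is also correct.

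The genuine gap is the one you name yourself: surjectivity of $I_{\partial}-\Psi\Psi(\lambda)$ given $\lambda\in\rho(A_{\Psi})$ is never proved, and neither of your two sketches closes it. Solvability of the resolvent equation only shows $\mathrm{Ran}(I_{\partial}-\Psi\Psi(\lambda))\supseteq\mathrm{Ran}(\Psi R(\lambda,A_0))$, and this is a structural dead end: any functional annihilating $\mathrm{Ran}(I_{\partial}-\Psi\Psi(\lambda))$ then automatically annihilates $\mathrm{Ran}(\Psi R(\lambda,A_0))$, so no manipulation of $(\lambda-A_{\Psi})f=g$ alone can detect a defect of surjectivity. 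Your approximate-point-spectrum route fails concretely at two places: first, $\Psi(\lambda)$ is injective but need not be bounded below, so $v_k=\Psi(\lambda)f_{\partial}^{(k)}$ may tend to $0$ in $L^1$, in which case ``$\lambda-A_{\Psi}$ bounded below'' yields no contradiction; second, ``correcting $v_k$ into $\mathcal{D}(A_{\Psi})$'' means solving $(\Psi_0-\Psi)w_k=(I_{\partial}-\Psi\Psi(\lambda))f_{\partial}^{(k)}$ with small $w_k$, i.e.\ producing a bounded right inverse of $\Psi_0-\Psi$; since $(\Psi_0-\Psi)\Psi(\lambda)=I_{\partial}-\Psi\Psi(\lambda)$, that is essentially the surjectivity being proved, so the argument is circular. (Indeed, if one had any right inverse of $\Psi_0-\Psi$ on $\mathcal{D}$, surjectivity would follow at once: given $f\in\mathcal{D}$ with $(\Psi_0-\Psi)f=g_{\partial}$, the element $\tilde f=f-R(\lambda,A_{\Psi})(\lambda-A)f$ lies in $\Ker(\lambda-A)$, satisfies $(\Psi_0-\Psi)\tilde f=g_{\partial}$, and $f_{\partial}=\Psi_0\tilde f$ solves $(I_{\partial}-\Psi\Psi(\lambda))f_{\partial}=g_{\partial}$.) Your second route, via Lemma~\ref{t:geth} and Remark~\ref{r:non}, likewise gives only the implication from \eqref{eq:ssper} to membership in the resolvent set, the direction you already have. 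So the ``only if'' half of the stated equivalence — precisely the part the paper delegates to \cite[Section 3.3.4]{rudnickityran17} — remains unproven in your attempt; to complete it you must either import that reference's argument or find a genuinely new one.
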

\begin{remark} Since the operator $\Psi\Psi(\lambda)$ is a positive contraction for $\lambda>0$, the operator $I_{\partial}-\Psi\Psi(\lambda)$ is invertible with positive inverse if and only if
\begin{equation}\label{c:rpsilam0}
\lim_{n\to \infty}\|(\Psi\Psi(\lambda))^n\|=0.
\end{equation}
This together with Remark \ref{r:non}\eqref{r:nona} implies that the operator $A_{\Psi}$ is resolvent positive.
\end{remark}
\begin{proof} For the proof of the first part see \cite[Section 3.3.4]{rudnickityran17}.
Since   $R(\lambda,A_{\Psi})f\in \mathcal{D}(A_{\Psi})$ for $f\in L^1$, we have
\[
AR(\lambda,A_{\Psi})f=A_{\Psi}R(\lambda,A_{\Psi})f=\lambda R(\lambda,A_{\Psi})f-f
\]
and $\Psi(R(\lambda,A_{\Psi})f)=\Psi_0(R(\lambda,A_{\Psi})f)$.
Hence, if $f$ is nonnegative, then $R(\lambda,A_{\Psi})f$ is a nonnegative element of $\mathcal{D}$. It follows from \eqref{eq:zero} that
\[
\int_E \big(AR(\lambda,A_{\Psi})f+BR(\lambda,A_{\Psi})f\big)dm\le 0.
\]
Thus, we get
\[
\int_E \lambda R(\lambda,A_{\Psi})f dm -\int_E fdm + \int_E BR(\lambda,A_{\Psi})fdm\le 0.
\]
This shows that both operators $\lambda R(\lambda,A_{\Psi})$ and $BR(\lambda,A_{\Psi})$, being positive operators, have norm smaller or equal to 1.
\end{proof}

It is easily seen that  the following holds.
\begin{lemma}\label{l:rlapb}
Assume conditions \eqref{S1}--\eqref{S4}.
Suppose that $\lambda>0$ is such that the operators $I_{\partial}-\Psi \Psi(\lambda)$ and $I-BR(\lambda,A_{\Psi})$ are invertible.
Then $\mathcal{I}-\mathcal{B}R(\lambda,\mathcal{A})$ is invertible and
\begin{multline}\label{eq:RAB}
R(\lambda,\mathcal{A}+\mathcal{B})(f,f_{\partial})=(R(\lambda,A_{\Psi})(I-BR(\lambda,A_{\Psi}))^{-1}f\\ + (I+R(\lambda,A_{\Psi})(I-BR(\lambda,A_{\Psi}))^{-1}B)\Psi(\lambda)(I_{\partial}-\Psi\Psi(\lambda))^{-1}f_{\partial}, 0).
\end{multline}
\end{lemma}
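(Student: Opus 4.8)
The plan is to exploit the $2\times2$ block structure of $\mathcal{B}R(\lambda,\mathcal{A})$ on $\mathcal{X}=L^1\times L^1_{\partial}$ recorded in \eqref{eq:BRA}, namely
\[
\mathcal{I}-\mathcal{B}R(\lambda,\mathcal{A})=
\begin{pmatrix}
I-BR(\lambda,A_0) & -B\Psi(\lambda)\\
-\Psi R(\lambda,A_0) & I_{\partial}-\Psi\Psi(\lambda)
\end{pmatrix},
\]
and to invert it by a Schur-complement computation with respect to the lower-right block $S=I_{\partial}-\Psi\Psi(\lambda)$, which is invertible by hypothesis. Once invertibility of $\mathcal{I}-\mathcal{B}R(\lambda,\mathcal{A})$ is established, Lemma~\ref{t:geth} yields $R(\lambda,\mathcal{A}+\mathcal{B})=R(\lambda,\mathcal{A})(\mathcal{I}-\mathcal{B}R(\lambda,\mathcal{A}))^{-1}$, and it remains only to read off its first coordinate via \eqref{eq:RA}.

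The key step is to compute the Schur complement of $S$. Using the resolvent formula \eqref{eq:resbou} for $R(\lambda,A_{\Psi})$, I would show
\[
\bigl(I-BR(\lambda,A_0)\bigr)-\bigl(-B\Psi(\lambda)\bigr)S^{-1}\bigl(-\Psi R(\lambda,A_0)\bigr)
=I-B\bigl(I+\Psi(\lambda)S^{-1}\Psi\bigr)R(\lambda,A_0)=I-BR(\lambda,A_{\Psi}).
\]
This is invertible by the second hypothesis, so invertibility of both $S$ and its Schur complement gives invertibility of $\mathcal{I}-\mathcal{B}R(\lambda,\mathcal{A})$ together with the standard block-inverse formula.

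Writing $C=I-BR(\lambda,A_{\Psi})$, the block-inverse formula produces $(\mathcal{I}-\mathcal{B}R(\lambda,\mathcal{A}))^{-1}(f,f_{\partial})=(g,g_{\partial})$ with
\[
g=C^{-1}f+C^{-1}B\Psi(\lambda)S^{-1}f_{\partial},\qquad
g_{\partial}=S^{-1}\Psi R(\lambda,A_0)C^{-1}f+S^{-1}f_{\partial}+S^{-1}\Psi R(\lambda,A_0)C^{-1}B\Psi(\lambda)S^{-1}f_{\partial}.
\]
Applying $R(\lambda,\mathcal{A})$ through \eqref{eq:RA} collapses $(g,g_{\partial})$ to the single coordinate $R(\lambda,A_0)g+\Psi(\lambda)g_{\partial}$; collecting the coefficients of $f$ and of $f_{\partial}$ and factoring $(I+\Psi(\lambda)S^{-1}\Psi)R(\lambda,A_0)=R(\lambda,A_{\Psi})$ via \eqref{eq:resbou} once more turns the $f$-coefficient into $R(\lambda,A_{\Psi})C^{-1}$ and the $f_{\partial}$-coefficient into $(I+R(\lambda,A_{\Psi})C^{-1}B)\Psi(\lambda)S^{-1}$, which is exactly \eqref{eq:RAB}.

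The main obstacle is purely bookkeeping: verifying the Schur-complement identity and then, after composing with $R(\lambda,\mathcal{A})$, recognizing $(I+\Psi(\lambda)S^{-1}\Psi)R(\lambda,A_0)$ as $R(\lambda,A_{\Psi})$ in both coefficients. The only delicate point is sign tracking, since the products $(-B\Psi(\lambda))S^{-1}(-\Psi R(\lambda,A_0))$ and $S^{-1}(-\Psi R(\lambda,A_0))C^{-1}(-B\Psi(\lambda))S^{-1}$ each carry two minus signs that cancel; no analytic input beyond the two assumed invertibilities and the resolvent formulas \eqref{eq:RA} and \eqref{eq:resbou} is required.
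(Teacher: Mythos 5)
Your proof is correct, and since the paper states this lemma with no proof at all (``It is easily seen that the following holds''), your Schur-complement computation with respect to the block $S=I_{\partial}-\Psi\Psi(\lambda)$ is exactly the routine verification the authors had in mind: the identification of the Schur complement with $I-BR(\lambda,A_{\Psi})$ via \eqref{eq:resbou} is legitimate because invertibility of $I_{\partial}-\Psi\Psi(\lambda)$ puts $\lambda$ in $\rho(A_{\Psi})$ by Lemma~\ref{l:APsi}, and your final assembly of the coefficients of $f$ and $f_{\partial}$ reproduces \eqref{eq:RAB}. One small imprecision: Lemma~\ref{t:geth} as stated assumes $\mathcal{I}-\mathcal{B}R(\lambda,\mathcal{A})$ is invertible \emph{with positive inverse for every} $\lambda>0$, neither of which you have here; what you actually need is the single-$\lambda$ identity $R(\lambda,\mathcal{A}+\mathcal{B})=R(\lambda,\mathcal{A})(\mathcal{I}-\mathcal{B}R(\lambda,\mathcal{A}))^{-1}$, which follows directly from the factorization $(\lambda\mathcal{I}-\mathcal{A}-\mathcal{B})u=(\mathcal{I}-\mathcal{B}R(\lambda,\mathcal{A}))(\lambda\mathcal{I}-\mathcal{A})u$ for $u\in\mathcal{D}(\mathcal{A})$, i.e.\ from \cite[Corollary II.3.21]{engelnagel00} cited just before Lemma~\ref{t:geth}, so this is a citation fix rather than a gap.
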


We now give
one more criterion for $A_{\Psi}+B$ to be the generator. It is a consequence of Lemma~\ref{l:rlapb}, Remark~\ref{r:non} and Corollary~\ref{c:gen}.

\begin{corollary}\label{c:both}
Assume conditions \eqref{S1}--\eqref{S4}. Suppose that there is $\lambda>0$ such that \eqref{c:rpsilam0} holds
and \begin{equation}\label{eq:sspert}
\lim_{n\to \infty}\|(B R(\lambda,A_{\Psi}))^n\|=0.
\end{equation}
Then  $(A_{\Psi}+B,\mathcal{D}(A_{\Psi}))$ is the generator of a substochastic semigroup.
\end{corollary}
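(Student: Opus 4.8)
The plan is to produce a single $\lambda>0$ at which $(\mathcal{A}+\mathcal{B},\mathcal{D}(\mathcal{A}))$ has a positive resolvent and then to invoke Corollary~\ref{c:gen}. Accordingly, I would fix the $\lambda>0$ furnished by the hypotheses and first extract positivity information from \eqref{c:rpsilam0}. By the remark following Lemma~\ref{l:APsi}, condition \eqref{c:rpsilam0} is exactly the statement that the positive contraction $\Psi\Psi(\lambda)$ has spectral radius strictly below $1$, so $I_{\partial}-\Psi\Psi(\lambda)$ is invertible with positive inverse $\sum_{n\ge 0}(\Psi\Psi(\lambda))^n$. By Lemma~\ref{l:APsi} this gives $\lambda\in\rho(A_{\Psi})$ together with the resolvent formula \eqref{eq:resbou}, whence $R(\lambda,A_{\Psi})\ge 0$ and, $B$ being positive, $BR(\lambda,A_{\Psi})$ is a positive operator; Lemma~\ref{l:APsi} also records that it is a contraction. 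Condition \eqref{eq:sspert} then says precisely that this positive contraction has spectral radius below $1$, so $I-BR(\lambda,A_{\Psi})$ is invertible with positive inverse.

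With both $I_{\partial}-\Psi\Psi(\lambda)$ and $I-BR(\lambda,A_{\Psi})$ invertible, Lemma~\ref{l:rlapb} applies and yields that $\mathcal{I}-\mathcal{B}R(\lambda,\mathcal{A})$ is invertible with $R(\lambda,\mathcal{A}+\mathcal{B})$ given by \eqref{eq:RAB}. I would then read off positivity: every operator occurring in \eqref{eq:RAB} --- namely $R(\lambda,A_{\Psi})$, $(I-BR(\lambda,A_{\Psi}))^{-1}$, $B$, $\Psi(\lambda)$ and $(I_{\partial}-\Psi\Psi(\lambda))^{-1}$ --- is positive, so $R(\lambda,\mathcal{A}+\mathcal{B})\ge 0$. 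The same two positive inverses enter the block-matrix (Schur-complement) expression for $(\mathcal{I}-\mathcal{B}R(\lambda,\mathcal{A}))^{-1}$ coming from the $2\times 2$ form \eqref{eq:BRA} of $\mathcal{B}R(\lambda,\mathcal{A})$, so this inverse is itself positive.

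It remains to upgrade positivity of the resolvent at the single point $\lambda$ to resolvent positivity on a half-line, as required by the definition used in Corollary~\ref{c:gen}. Here I would invoke Remark~\ref{r:non}(b): positivity of $(\mathcal{I}-\mathcal{B}R(\lambda,\mathcal{A}))^{-1}$ is equivalent to the spectral radius of $\mathcal{B}R(\lambda,\mathcal{A})$ being strictly smaller than $1$, i.e. to \eqref{eq:ssper} holding at this $\lambda$. By the monotonicity $\mathcal{B}R(\mu,\mathcal{A})\le \mathcal{B}R(\lambda,\mathcal{A})$ for $\mu>\lambda$ noted in the same remark, \eqref{eq:ssper} then holds for every $\mu\ge\lambda$, so $\mathcal{I}-\mathcal{B}R(\mu,\mathcal{A})$ is invertible with positive inverse and, via \eqref{e:rab} of Lemma~\ref{t:geth}, $R(\mu,\mathcal{A}+\mathcal{B})\ge 0$ for all such $\mu$. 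Thus $(\mathcal{A}+\mathcal{B},\mathcal{D}(\mathcal{A}))$ is resolvent positive on $L^1\times L^1_{\partial}$, and Corollary~\ref{c:gen} immediately gives that $(A_{\Psi}+B,\mathcal{D}(A_{\Psi}))$ generates a substochastic semigroup.

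The step I expect to carry the real weight is the passage from the two spectral-radius hypotheses \eqref{c:rpsilam0} and \eqref{eq:sspert} to positivity of $(\mathcal{I}-\mathcal{B}R(\lambda,\mathcal{A}))^{-1}$: one must be sure that the block inversion genuinely expresses this inverse through the two component inverses alone, so that no cancellation destroys positivity, and that positivity at the single $\lambda$ then propagates to the half-line through the monotonicity of $\lambda\mapsto\mathcal{B}R(\lambda,\mathcal{A})$. Everything else is bookkeeping with the already-established formulas.
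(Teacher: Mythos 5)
Your proposal is correct and takes essentially the same route as the paper, which proves Corollary~\ref{c:both} precisely as a consequence of Lemma~\ref{l:rlapb}, Remark~\ref{r:non} and Corollary~\ref{c:gen}: the two Neumann-series hypotheses \eqref{c:rpsilam0} and \eqref{eq:sspert} yield invertibility via Lemma~\ref{l:rlapb}, positivity of $(\mathcal{I}-\mathcal{B}R(\lambda,\mathcal{A}))^{-1}$ gives \eqref{eq:ssper} by Remark~\ref{r:non}, and monotonicity in $\lambda$ plus Corollary~\ref{c:gen} finishes. Your Schur-complement computation, in which the complement of the block $I_{\partial}-\Psi\Psi(\lambda)$ is exactly $I-BR(\lambda,A_{\Psi})$ by \eqref{eq:resbou}, simply makes explicit the positivity step the paper leaves implicit, and it is sound.
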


Our next goal is to obtain sufficient conditions for the substochastic semigroup  from Theorem~\ref{t:gmain} to be stochastic.

\begin{corollary}\label{c:stochastic}
Assume conditions~\eqref{S1}--\eqref{S2} hold true. If
\begin{equation}\label{eq:zeroo}
\int_{E}(Af+Bf)\,dm+\int_{E_{\partial}}(\Psi f-\Psi_0f)\, dm_{\partial}= 0,\quad f\in  \mathcal{D},f\ge 0,
\end{equation}
then the substochastic semigroup $\{P(t)\}_{t\ge 0}$ from Theorem~\ref{t:gmain}  is stochastic if and only if there is $\lambda>0$ such that
\begin{equation}\label{eq:stoch}
\lim_{n\to\infty}\|(\mathcal{B}R(\lambda,\mathcal{A}))^n(f,0)\|=0,\quad f\in L^1, f\ge0.
\end{equation}
In particular, if conditions \eqref{c:rpsilam0} and \eqref{eq:sspert}  hold for some $\lambda>0$, then $\{P(t)\}_{t\ge 0}$  is a stochastic semigroup and  $(A_{\Psi}+B,\mathcal{D}(A_{\Psi}))$ is its generator.
\end{corollary}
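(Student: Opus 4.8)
The plan is to reduce the stochasticity of $\{P(t)\}_{t\ge 0}$ to a resolvent-norm condition and then extract that condition from the series representation of the resolvent produced in the proof of Theorem~\ref{t:gmain}. First note that since \eqref{eq:zeroo} is the equality version of \eqref{eq:zero}, all of \eqref{S1}--\eqref{S4} hold, so Theorem~\ref{t:gmain} applies and $\{P(t)\}_{t\ge 0}$ exists. I would recall the standard fact that a substochastic semigroup with generator $G$ is stochastic if and only if $\|\lambda R(\lambda,G)f\|=\|f\|$ for every nonnegative $f$ and \emph{some} (equivalently every) $\lambda>0$: from $\lambda R(\lambda,G)f=\int_0^\infty \lambda e^{-\lambda t}P(t)f\,dt$ and positivity one gets $\|\lambda R(\lambda,G)f\|=\int_0^\infty \lambda e^{-\lambda t}\|P(t)f\|\,dt$, and since $t\mapsto\|P(t)f\|$ is nonincreasing and bounded by $\|f\|$, equality with $\|f\|$ forces $\|P(t)f\|=\|f\|$ for all $t$. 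Thus it suffices to analyse $\|\lambda R(\lambda,G)f\|$ for one $\lambda>0$. By the proof of Theorem~\ref{t:gmain}, $R(\lambda,G)f=\Pi_1\mathcal{R}_\lambda(f,0)$ with $\mathcal{R}_\lambda$ as in \eqref{e:res}; since $R(\lambda,\mathcal{A})$ maps into $L^1\times\{0\}$ by \eqref{eq:RA}, the element $\lambda\mathcal{R}_\lambda(f,0)$ is nonnegative and its norm equals $\|\lambda R(\lambda,G)f\|$.

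The crux, and the step I expect to be the main obstacle, is a local mass-balance identity. Write $\ell(g,g_\partial)=\int_E g\,dm+\int_{E_\partial}g_\partial\,dm_\partial$ for the signed mass functional on $L^1\times L^1_\partial$, which agrees with the norm on nonnegative elements. For $w\ge 0$ set $(h,0)=R(\lambda,\mathcal{A})w$; then $h\in\mathcal{D}$ is nonnegative because $R(\lambda,\mathcal{A})$ is positive, so \eqref{eq:zeroo} gives $\ell\big((\mathcal{A}+\mathcal{B})(h,0)\big)=0$. Applying $\ell$ to \eqref{eq:sum1} evaluated at $w$ and using $(\mathcal{A}+\mathcal{B})R(\lambda,\mathcal{A})w=(\mathcal{A}+\mathcal{B})(h,0)$ then yields
\[
\ell\big(\lambda R(\lambda,\mathcal{A})w\big)=\ell(w)-\ell\big(\mathcal{B}R(\lambda,\mathcal{A})w\big),\qquad w\ge 0 .
\]
As all three arguments are nonnegative, this is an identity between norms.

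I would then telescope this with $w=(\mathcal{B}R(\lambda,\mathcal{A}))^n(f,0)$ for $f\ge 0$. Summing
\[
\big\|\lambda R(\lambda,\mathcal{A})(\mathcal{B}R(\lambda,\mathcal{A}))^n(f,0)\big\|=\big\|(\mathcal{B}R(\lambda,\mathcal{A}))^n(f,0)\big\|-\big\|(\mathcal{B}R(\lambda,\mathcal{A}))^{n+1}(f,0)\big\|
\]
from $n=0$ to $N$, using additivity of the norm on the nonnegative cone on the left and the convergence of the partial sums to $\lambda\mathcal{R}_\lambda(f,0)$ from \eqref{e:res}, and letting $N\to\infty$ (the nonincreasing sequence $\|(\mathcal{B}R(\lambda,\mathcal{A}))^n(f,0)\|$ converges), I obtain
\[
\|f\|-\|\lambda R(\lambda,G)f\|=\lim_{n\to\infty}\big\|(\mathcal{B}R(\lambda,\mathcal{A}))^n(f,0)\big\|,\qquad f\ge 0,\ \lambda>0 .
\]
Combined with the first paragraph, $\{P(t)\}_{t\ge 0}$ is stochastic if and only if the right-hand side vanishes for all $f\ge 0$, i.e. exactly \eqref{eq:stoch}; and since a single $\lambda$ already characterizes stochasticity, the ``for some $\lambda$'' formulation is justified.

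For the final assertion I would invoke the already-established machinery. Under \eqref{c:rpsilam0} and \eqref{eq:sspert}, Lemma~\ref{l:rlapb} shows that $\mathcal{I}-\mathcal{B}R(\lambda,\mathcal{A})$ is invertible, which by Remark~\ref{r:non} is equivalent to \eqref{eq:ssper}, namely $\|(\mathcal{B}R(\lambda,\mathcal{A}))^n\|\to 0$; this trivially forces \eqref{eq:stoch}, hence stochasticity by the equivalence just established. That the generator is $(A_{\Psi}+B,\mathcal{D}(A_{\Psi}))$ under the same two hypotheses is precisely the content of Corollary~\ref{c:both}.
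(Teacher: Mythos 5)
Your proof is correct and matches the paper's argument in all essentials: the mass-balance identity $\|\lambda R(\lambda,\mathcal{A})w\|=\|w\|-\|\mathcal{B}R(\lambda,\mathcal{A})w\|$ for nonnegative $w$, obtained by combining \eqref{eq:sum1} with \eqref{eq:zeroo}, is exactly the paper's \eqref{c:stochres}, and telescoping it along the series \eqref{e:res} yields the same limit identity $\|f\|-\|\lambda R(\lambda,G)f\|=\lim_{n\to\infty}\|(\mathcal{B}R(\lambda,\mathcal{A}))^n(f,0)\|$ from which the equivalence with \eqref{eq:stoch} follows. The remaining differences are cosmetic — you apply the identity termwise and sum where the paper applies it once to the partial sum $(g,g_{\partial})=\sum_{n=0}^{N}(\mathcal{B}R(\lambda,\mathcal{A}))^n(f,0)$, you derive the resolvent criterion for stochasticity from the Laplace representation rather than quoting it, and you make the ``in particular'' clause explicit via Corollary~\ref{c:both} — none of which changes the route.
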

\begin{remark} Note that condition \eqref{eq:zeroo} is necessary for $(A_{\Psi}+B,\mathcal{D}(A_{\Psi}))$ to be the generator of a stochastic semigroup. In the setting of Section~\ref{s:gen} condition \eqref{eq:stoch} is equivalent to
\[
\lim_{n\to\infty}\int_E \mathbb{E}_x(e^{-\lambda \tau_n})f(x)m(dx)=0,\quad f\in L^1, f\ge0,
\]
by Lemma~\ref{l:ltr}. Thus, in particular \eqref{c:finnt} implies \eqref{eq:stoch}.

In applications,  to check condition~\eqref{eq:sspert}
we show that some power of the operator $BR(\lambda,A_{\Psi})$ has the norm strictly smaller than~1, see Section~\ref{s:cell}. Similarly, one can check condition \eqref{c:rpsilam0}.
\end{remark}

\begin{proof}
Recall that a substochastic semigroup with generator $G$ is stochastic if and only if there is $\omega\in \mathbb{R}0$ such that
the operator $\lambda R(\lambda,G)$ is stochastic for  all $\lambda>\omega$. Since $\mathcal{B}R(\lambda,\mathcal{A})$ is a contraction, condition \eqref{eq:stoch} holds for all sufficiently large $\lambda$. Thus $G$ is the generator of a  stochastic semigroup if and only if the operator $\lambda R(\lambda,G)$ is stochastic for all $\lambda$ satisfying \eqref{eq:stoch}.
Observe that
combining \eqref{eq:sum1} with \eqref{eq:zeroo} leads to
\begin{equation}\label{c:stochres}
\|\lambda R(\lambda,\mathcal{A})(g,g_\partial)\|=\|(g,g_\partial)\|-\|\mathcal{B}R(\lambda,\mathcal{A})(g,g_\partial)\|
\end{equation}
for all nonnegative $(g,g_\partial)\in L^1\times L^1_{\partial}$. Hence, for nonnegative $f\in L^1$ and for
\[
(g,g_\partial)=\sum_{n=0}^N (\mathcal{B}R(\lambda,\mathcal{A}))^n(f,0)
\]
we obtain
\[
\lambda\int_E f_Ndm =\int_E fdm-\|(\mathcal{B}R(\lambda,\mathcal{A}))^{N+1}(f,0)\|,
\]
where
\[
f_N=\sum_{n=0}^N (R(\lambda,A_0)B+\Psi(\lambda)\Psi)^nR(\lambda,A_0)f, \quad N\ge 0.
\]
By taking the limit as $N\to \infty$, we see that
\[
\lambda\int_E R(\lambda,G)f dm =\int_E fdm-\lim_{N\to\infty} \|(\mathcal{B}R(\lambda,\mathcal{A}))^{N}(f,0)\|,
\]
since $f_N\uparrow R(\lambda,G)f$ and $\mathcal{B}R(\lambda,\mathcal{A})$ is a contraction.  This completes the proof.
\end{proof}

\subsection{Invariant densities for perturbed semigroups}\label{s:dens}

In this section we
define a linear operator $K $ on the space $L^1\times L^1_{\partial}$ that will correspond to  \eqref{d:operatorK} in the setting of Section~\ref{s:main}.
We also give relationships  between invariant densities of the operator $K$  and invariant densities of the substochastic semigroup  $\{P(t)\}_{t\ge 0}$ from Theorem~\ref{t:gmain};  see \cite[Section 3]{biedrzyckatyran} for the case $\Psi_0=\Psi=0$.
Our next result extends \cite[Theorem 3.6]{tyran09} to the situation studied in this paper.
\begin{theorem}\label{t:opK}
Assume conditions \eqref{S1}--\eqref{S4}.
Define the operator $K \colon L^1\times L^1_{\partial}\to L^1\times L^1_{\partial}$ by
\begin{equation}\label{eq:K}
K (f,f_{\partial})=\lim_{\lambda\downarrow 0}\mathcal{B}R(\lambda,\mathcal{A})(f,f_{\partial}).
\end{equation}
Then $K $ is a substochastic on $L^1\times L^1_{\partial}$. If, additionally, condition \eqref{eq:zeroo} holds then
 $K $ is stochastic if and only if the semigroup $\{S(t)\}_{t\ge 0}$ generated by $(A_0,\mathcal{D}(A_0))$ is {strongly stable}, i.e.
\[
\lim_{t\to\infty} S(t)f=0, \quad f\in L^1.
\]
\end{theorem}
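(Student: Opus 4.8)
The plan is to realize $K$ as a monotone limit of positive contractions and then, under \eqref{eq:zeroo}, to read off its defect from the norm identity \eqref{c:stochres}, reducing the stochasticity question to an Abelian-type statement about $\{S(t)\}_{t\ge 0}$ together with a boundary estimate.

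First I would establish that the limit \eqref{eq:K} exists and defines a substochastic operator. By Remark~\ref{r:non}(b) we have $\mathcal{B}R(\mu,\mathcal{A})\le \mathcal{B}R(\lambda,\mathcal{A})$ whenever $\mu>\lambda$, so for nonnegative $(f,f_\partial)$ the family $\mathcal{B}R(\lambda,\mathcal{A})(f,f_\partial)$ increases as $\lambda\downarrow 0$ while staying bounded in norm by $\|(f,f_\partial)\|$, since each $\mathcal{B}R(\lambda,\mathcal{A})$ is a contraction. An increasing family of nonnegative elements of $L^1\times L^1_\partial$ with bounded norm converges in $L^1\times L^1_\partial$ (monotone convergence applied componentwise), so $K(f,f_\partial)$ is well defined on the positive cone and extends by linearity. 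As a limit of positive contractions, $K$ is positive with $\|K(f,f_\partial)\|\le\|(f,f_\partial)\|$, hence substochastic.

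For the stochasticity criterion I would start from \eqref{c:stochres}, which under \eqref{eq:zeroo} reads $\|\lambda R(\lambda,\mathcal{A})(f,f_\partial)\|=\|(f,f_\partial)\|-\|\mathcal{B}R(\lambda,\mathcal{A})(f,f_\partial)\|$ for nonnegative $(f,f_\partial)$. Letting $\lambda\downarrow 0$ and using the norm convergence from the first step gives
\[
\lim_{\lambda\downarrow 0}\|\lambda R(\lambda,\mathcal{A})(f,f_\partial)\|=\|(f,f_\partial)\|-\|K(f,f_\partial)\|,
\]
so $K$ is stochastic if and only if $\lim_{\lambda\downarrow 0}\|\lambda R(\lambda,\mathcal{A})(f,f_\partial)\|=0$ for every nonnegative $(f,f_\partial)$. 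By \eqref{eq:RA} and positivity this quantity splits as $\|\lambda R(\lambda,A_0)f\|+\|\lambda\Psi(\lambda)f_\partial\|$, and the two summands are treated separately. The interior term is governed by an Abelian theorem: for nonnegative $f$, writing $R(\lambda,A_0)f=\int_0^\infty e^{-\lambda t}S(t)f\,dt$ and using Tonelli yields $\|\lambda R(\lambda,A_0)f\|=\int_0^\infty e^{-s}\|S(s/\lambda)f\|\,ds$; since $t\mapsto\|S(t)f\|$ is nonincreasing its limit exists, and dominated convergence gives $\lim_{\lambda\downarrow 0}\|\lambda R(\lambda,A_0)f\|=\lim_{t\to\infty}\|S(t)f\|$. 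Thus vanishing of the interior term for all $f\ge 0$ is precisely strong stability of $\{S(t)\}_{t\ge 0}$, and taking $f_\partial=0$ shows that $K$ stochastic forces strong stability.

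The \emph{main obstacle} is the boundary term, for which I would show that strong stability already forces $\lim_{\lambda\downarrow 0}\|\lambda\Psi(\lambda)f_\partial\|=0$; here the resolvent identity \eqref{eq:Psilam} is the key device. Fixing $\mu>0$ and setting $w=\Psi(\mu)f_\partial\ge 0$, it gives $\lambda\Psi(\lambda)f_\partial=\lambda\Psi(\mu)f_\partial+(\mu-\lambda)\lambda R(\lambda,A_0)w$. The first term tends to $0$ because $\Psi(\mu)f_\partial$ is a fixed element of $L^1$, and the second has norm $(\mu-\lambda)\|\lambda R(\lambda,A_0)w\|\to\mu\lim_{t\to\infty}\|S(t)w\|=0$ by the Abelian computation applied to $w$. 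Consequently $K$ is stochastic if and only if both the interior and boundary terms vanish for all nonnegative arguments, and strong stability of $\{S(t)\}_{t\ge 0}$ implies both while $K$ stochastic implies strong stability; this closes the equivalence.
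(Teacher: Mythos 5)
Your proposal is correct, and its skeleton coincides with the paper's proof: the first part is the monotone-limit argument (increase of $\mathcal{B}R(\lambda,\mathcal{A})$ as $\lambda\downarrow 0$, contractivity, monotone convergence in $L^1$), which the paper simply delegates to \cite{tyran09}; the stochasticity criterion is obtained, exactly as in the paper, by passing to the limit in \eqref{c:stochres} and splitting $\lambda R(\lambda,\mathcal{A})(f,f_\partial)$ via \eqref{eq:RA} using additivity of the $L^1$-norm on the positive cone; and the boundary term is killed by the identity \eqref{eq:Psilam}, again as in the paper. The one substantive difference is how you establish the equivalence between strong stability of $\{S(t)\}_{t\ge 0}$ and $\lim_{\lambda\downarrow 0}\lambda R(\lambda,A_0)f=0$: the paper invokes the mean ergodic theorem \cite[Chapter VIII.4]{yosida78} together with norm additivity, whereas you prove it directly via the Abelian computation
\[
\|\lambda R(\lambda,A_0)f\|=\int_0^\infty e^{-s}\,\bigl\|S(s/\lambda)f\bigr\|\,ds \longrightarrow \lim_{t\to\infty}\|S(t)f\|\qquad (f\ge 0),
\]
which is legitimate because $t\mapsto\|S(t)f\|$ is nonincreasing on the positive cone (so the limit exists) and the integrand is dominated by $e^{-s}\|f\|$. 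Your version is more elementary and self-contained — it avoids any ergodic-theoretic machinery and makes the quantitative link between the resolvent defect and the semigroup's long-time norm explicit — at the cost of being tied to the $L^1$/positivity setting, while the ergodic-theorem route the paper uses is the standard abstract device. Your handling of the two implications (taking $f_\partial=0$ to get necessity; using \eqref{eq:Psilam} with $w=\Psi(\mu)f_\partial\ge 0$ and the same Abelian estimate for sufficiency on the boundary component) is exactly the paper's logic, with the details filled in.
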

\begin{proof}
The proof of the first part  is as in  \cite{tyran09}.
From \eqref{c:stochres} it follows that
\[
\|K  (f,f_{\partial})\|=\|(f,f_{\partial})\|-\lim_{\lambda\downarrow 0}\lambda\|R(\lambda,\mathcal{A})(f,f_{\partial})\|
\]
for nonnegative $f$ and $f_{\partial}$.
To complete the proof, we use the fact that the mean ergodic
theorem for semigroups \cite[Chapter VIII.4]{yosida78} and additivity of the norm imply that
$\{S(t)\}_{t\ge 0}$ is strongly stable on $L^1$ if and only if
\[
\lim_{\lambda\downarrow 0}\lambda R(\lambda,A_0)f=0,\quad f\in L^1.
\]
Observe that  \eqref{eq:Psilam} implies that $\lim_{\lambda\downarrow 0}\lambda \Psi(\lambda) f_{\partial}=0$ for $f_{\partial}\in L^1_{\partial}$, if $\{S(t)\}_{t\ge 0}$ is strongly stable, completing the proof.
\end{proof}

We have the following extension of \cite[Theorem 3.3]{biedrzyckatyran}.

\begin{theorem}\label{thm:exunifp}
Suppose that conditions \eqref{S1}--\eqref{S4} hold true. Let
$(f,f_{\partial})\in L^1\times L^1_{\partial}$ be an  invariant density for the operator $K $ and let
\begin{equation}\label{eq:esd}
\overline{f}=\sup_{\lambda> 0} (R(\lambda,A_0)f+\Psi(\lambda)f_{\partial}).
\end{equation}
If $\overline{f}\in L^1$ then $\overline{f}/\|\overline{f}\|$ is an invariant density for the semigroup $\{P(t)\}_{t\ge 0}$.
\end{theorem}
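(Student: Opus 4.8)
The plan is to lift the whole problem to the product space $\mathcal{X}=L^1\times L^1_{\partial}$ and show that $(\overline{f},0)$ is a fixed point of the resolvent $R(\mu,\mathcal{G})$ for every $\mu>0$, where $\mathcal{G}$ generates the semigroup $\{\mathcal{P}(t)\}_{t\ge 0}$ built in the proof of Theorem~\ref{t:gmain}; applying the projection $\Pi_1$ will then give $P(t)\overline{f}=\overline{f}$. First I would set $h_\lambda:=R(\lambda,A_0)f+\Psi(\lambda)f_{\partial}=\Pi_1 R(\lambda,\mathcal{A})(f,f_{\partial})$, using \eqref{eq:RA}, so that $\overline{f}=\sup_{\lambda>0}h_\lambda=\lim_{\lambda\downarrow 0}h_\lambda$. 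The supremum is a monotone limit, since $R(\lambda,A_0)$ is decreasing in $\lambda$ on positive elements and $\Psi(\mu)\le\Psi(\lambda)$ for $\mu>\lambda$ by Remark~\ref{r:non}\eqref{r:nona}. Because $h_\lambda\uparrow\overline{f}$ and $\overline{f}\in L^1$ by hypothesis, monotone convergence upgrades this to $h_\lambda\to\overline{f}$ in norm.

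Three elementary identities drive the argument: $Ah_\lambda=\lambda h_\lambda-f$ (from $A_0R(\lambda,A_0)f=\lambda R(\lambda,A_0)f-f$ together with $A\Psi(\lambda)f_{\partial}=\lambda\Psi(\lambda)f_{\partial}$, as $\Psi(\lambda)f_{\partial}\in\Ker(\lambda-A)$), $\Psi_0h_\lambda=f_{\partial}$ (from \eqref{S2} and $\Psi_0R(\lambda,A_0)f=0$), and $\mathcal{B}R(\lambda,\mathcal{A})(f,f_{\partial})=(Bh_\lambda,\Psi h_\lambda)$. The crux is the passage $\lambda\downarrow 0$. Here the hypothesis $\overline{f}\in L^1$ enters decisively: it gives $\|\lambda h_\lambda\|\le\lambda\|\overline{f}\|\to 0$, so $\lambda h_\lambda\to 0$ in $L^1$. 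Combining the $K$-invariance $K(f,f_{\partial})=(f,f_{\partial})$ with the formula $K=\lim_{\lambda\downarrow 0}\mathcal{B}R(\lambda,\mathcal{A})$ from Theorem~\ref{t:opK}, the third identity yields the two norm limits $Bh_\lambda\to f$ and $\Psi h_\lambda\to f_{\partial}$.

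I would then fix $\mu>0$ and invoke the relation $\mathcal{R}_\mu(\mu\mathcal{I}-\mathcal{A}-\mathcal{B})(g,0)=(g,0)$, valid for $g\in\mathcal{D}$ as established in the proof of Theorem~\ref{t:gmain}, applied to $g=h_\lambda\in\mathcal{D}$. Substituting $\mathcal{A}(h_\lambda,0)=(\lambda h_\lambda-f,-f_{\partial})$ and $\mathcal{B}(h_\lambda,0)=(Bh_\lambda,\Psi h_\lambda)$ produces
\[
\mathcal{R}_\mu\bigl((\mu-\lambda)h_\lambda+f-Bh_\lambda,\; f_{\partial}-\Psi h_\lambda\bigr)=(h_\lambda,0).
\]
Letting $\lambda\downarrow 0$, the argument of $\mathcal{R}_\mu$ converges in norm to $(\mu\overline{f},0)$ (the terms $f-Bh_\lambda$ and $f_{\partial}-\Psi h_\lambda$ vanish, while $(\mu-\lambda)h_\lambda\to\mu\overline{f}$), and the right side converges to $(\overline{f},0)$. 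Since $\mathcal{R}_\mu$ is bounded, with $\|\mathcal{R}_\mu\|\le\mu^{-1}$, it is continuous, so $\mu\mathcal{R}_\mu(\overline{f},0)=(\overline{f},0)$.

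Finally, because every term of $\mathcal{R}_\mu$ in \eqref{e:res} has zero boundary component, $\mathcal{R}_\mu$ maps $\mathcal{X}$ into $\mathcal{X}_0=L^1\times\{0\}$, so its part in $\mathcal{X}_0$ is $R(\mu,\mathcal{G})$ and $\mu R(\mu,\mathcal{G})(\overline{f},0)=(\overline{f},0)$ for all $\mu>0$. This forces $(\overline{f},0)\in\mathcal{D}(\mathcal{G})$ with $\mathcal{G}(\overline{f},0)=0$, hence $\mathcal{P}(t)(\overline{f},0)=(\overline{f},0)$ and $P(t)\overline{f}=\Pi_1\mathcal{P}(t)(\overline{f},0)=\overline{f}$ for all $t$. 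A short nondegeneracy check, using injectivity of $R(\lambda,A_0)$ when $f\neq 0$ and $\Psi_0\Psi(\lambda)f_{\partial}=f_{\partial}$ when $f_{\partial}\neq 0$, shows $\overline{f}\neq 0$, so $\overline{f}/\|\overline{f}\|$ is an invariant density. I expect the main obstacle to be the careful bookkeeping that converts the single operator identity $K(f,f_{\partial})=(f,f_{\partial})$ into the two separate norm limits $Bh_\lambda\to f$ and $\Psi h_\lambda\to f_{\partial}$, and the verification that these, together with $\lambda h_\lambda\to 0$, are precisely what collapses the inner and boundary perturbation terms as $\lambda\downarrow 0$.
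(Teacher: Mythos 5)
Your proof is correct, but it takes a genuinely different route from the paper's. The paper keeps the limit passage at the level of the operator $\mathcal{A}$: since $\lambda\in\rho(\mathcal{A})$, the operator $(\mathcal{A},\mathcal{D}\times\{0\})$ is closed, and from $\mathcal{A}(f_\lambda,0)=(\lambda f_\lambda-f,-f_\partial)$ with $f_\lambda\uparrow\overline{f}$ it concludes $\overline{f}\in\mathcal{D}$, $A\overline{f}=-f$, $\Psi_0\overline{f}=f_\partial$; because $B$ and $\Psi$ are merely positive (not closed), it only gets the one-sided bounds $B\overline{f}\ge f$ and $\Psi\overline{f}\ge f_\partial$, and then uses the budget inequality \eqref{eq:zero} to squeeze $\int_{E_\partial}(\Psi\overline{f}-\Psi_0\overline{f})\,dm_\partial=0$, placing $\overline{f}$ in $\mathcal{D}(A_\Psi)$ with $G\overline{f}=A\overline{f}+B\overline{f}\ge 0$; finally $P(t)\overline{f}-\overline{f}=\int_0^t P(s)G\overline{f}\,ds\ge 0$ together with contractivity of $P(t)$ forces equality. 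You instead never place $\overline{f}$ in the domain: you push $\lambda\downarrow 0$ through the \emph{bounded} operator $\mathcal{R}_\mu$ in the identity $\mathcal{R}_\mu(\mu\mathcal{I}-\mathcal{A}-\mathcal{B})(h_\lambda,0)=(h_\lambda,0)$ (which the paper does establish, for all elements of $\mathcal{D}$, in the proof of Theorem~\ref{t:gmain}, so citing it is legitimate), obtaining $\mu R(\mu,\mathcal{G})(\overline{f},0)=(\overline{f},0)$ for every $\mu>0$ and hence $\mathcal{G}(\overline{f},0)=0$ exactly — you get genuine invariance on the nose, with no dissipativity squeeze and no contraction trick. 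Both arguments rest on the same two core facts: $h_\lambda\uparrow\overline{f}$ in $L^1$ by monotone convergence (using $\overline{f}\in L^1$), and the translation of $K$-invariance via \eqref{eq:K} and \eqref{eq:BRA} into $Bh_\lambda\uparrow f$, $\Psi h_\lambda\uparrow f_\partial$, where monotonicity plus integrability of the limits upgrades order convergence to norm convergence — exactly the bookkeeping you flagged as the crux, and you handle it correctly. What each approach buys: the paper's is shorter and yields the extra structural information $\overline{f}\in\mathcal{D}(A_\Psi)$ with $A\overline{f}=-f$ (in particular that $B\overline{f}\ge f$ can be strict is harmless there); yours avoids invoking closedness of $\mathcal{A}$, needs only $\|\mathcal{R}_\mu\|\le\mu^{-1}$ and the resolvent series \eqref{e:res} (whose terms all have vanishing boundary component, justifying the identification with $R(\mu,\mathcal{G})$ on $\mathcal{X}_0$), and supplies a more explicit nondegeneracy check than the paper's bare assertion that $\overline{f}$ is nontrivial.
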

\begin{proof} Theorem~\ref{t:gmain} implies that the generator $(G,\mathcal{D}(G))$ of the semigroup $\{P(t)\}_{t\ge 0}$ in an extension of the operator $(A_{\Psi}+B,\mathcal{D}(A_{\Psi}))$. We first show that  $\overline{f}$ as in \eqref{eq:esd} satisfies $\overline{f}\in \mathcal{D}(A_{\Psi})$ and $G\overline{f}\ge 0$.
Let
\[
f_{\lambda}=R(\lambda,A_0)f+\Psi(\lambda)f_{\partial},\quad \lambda>0.
\]
We have $f_\lambda\ge 0$,  $f_\lambda\uparrow \overline{f}$, and $\overline{f}$ is nontrivial.
Since the operator $(\mathcal{A},\mathcal{D}\times \{0\})$ is closed and $\mathcal{A}(f_\lambda,0)=(Af_\lambda,-\Psi_0f_\lambda)=(\lambda f_\lambda -f,-f_{\partial})$, we see that  $\overline{f}\in\mathcal{D}$, $A\overline{f}=-f$ and $\Psi_0\overline{f}=f_{\partial}$. From formula \eqref{eq:K} it follows that $Bf_\lambda\uparrow f$ and $\Psi f_\lambda\uparrow f_{\partial}$ implying that $B\overline{f}\ge f$ and $\Psi\overline{f}\ge f_{\partial}$. Therefore, $ A\overline{f}+B\overline{f}\ge 0$ and $\Psi\overline{f}- \Psi_0\overline{f}\ge 0$. This together with \eqref{eq:zero} gives
\[
\int_{E_{\partial}}(\Psi\overline{f}- \Psi_0\overline{f})dm_{\partial}= 0.
\]
Hence, $\Psi\overline{f}=\Psi_0\overline{f}$ and $G\overline{f}=A\overline{f}+B\overline{f}\ge 0$.
Next, we see that
\[P(t)\overline{f}-\overline{f}=\int_0^t P(s)G\overline{f}ds\ge 0
\]
implying that $P(t)\overline{f}\ge \overline{f}$ for all $t>0$. Since the operator $P(t)$ is a contraction, the result follows. \end{proof}

We also have the following converse of Theorem \ref{thm:exunifp} extending \cite[Corollary 3.11]{biedrzyckatyran}.

\begin{theorem}\label{thm:exunifpi} Assume conditions \eqref{S1}--\eqref{S4}.
Suppose that the semigroup $\{P(t)\}_{t\ge 0}$ has an invariant density ${f}_*$ and that the operator $K $ is stochastic.
If $(B {f}_*,\Psi {f}_*)\in L^1\times L^1_{\partial}$ then $ {f}_*\in \mathcal{D}$, $\|(B {f}_*,\Psi {f}_*)\|>0$,  and  $(B {f}_*,\Psi{f}_*)/\|(B {f}_*,\Psi {f}_*)\|$ is an invariant density for the operator $K $.
\end{theorem}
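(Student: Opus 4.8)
The plan is to convert $P(t)f_*=f_*$ into $Gf_*=0$, to show that this forces $f_*$ into the core domain $\mathcal D(A_\Psi)$, and then to read off that $(Bf_*,\Psi f_*)$ is a nonnegative, nontrivial, $K$-invariant element. Since $P(t)f_*=f_*$ for all $t$ we have $Gf_*=0$, hence $R(\lambda,G)f_*=\lambda^{-1}f_*$ for every $\lambda>0$. Using the resolvent series \eqref{d:resg}, I set $u=R(\lambda,A_0)f_*$ and $v_N=\sum_{n=0}^N(R(\lambda,A_0)B+\Psi(\lambda)\Psi)^nu$; then $v_N\in\mathcal D$, $0\le v_N\uparrow R(\lambda,G)f_*=\lambda^{-1}f_*$, and separating the $n=0$ term gives the recursion $v_N=R(\lambda,A_0)(f_*+Bv_{N-1})+\Psi(\lambda)\Psi v_{N-1}$.

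The main obstacle is to show that $f_*\in\mathcal D(A_\Psi)$, since a priori $f_*$ belongs only to $\mathcal D(G)$, which may strictly contain $\mathcal D(A_\Psi)$. Here the hypothesis $(Bf_*,\Psi f_*)\in L^1\times L^1_\partial$ enters: it is precisely what makes the increasing sequences $Bv_{N-1}$ and $\Psi v_{N-1}$ converge in norm, to some $\chi\in L^1$ and $\xi\in L^1_\partial$. As $R(\lambda,A_0)$ and $\Psi(\lambda)$ are bounded with ranges in $\mathcal D(A_0)\subset\mathcal D$ and $\Ker(\lambda-A)\subset\mathcal D$, I would pass to the limit in the recursion to obtain $\lambda^{-1}f_*=R(\lambda,A_0)(f_*+\chi)+\Psi(\lambda)\xi$, which lies in $\mathcal D$; in particular $f_*\in\mathcal D$. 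Applying $A$ and $\Psi_0$ to this identity, and using $A\Psi(\lambda)=\lambda\Psi(\lambda)$ on $\Ker(\lambda-A)$, $\Psi_0R(\lambda,A_0)=0$, and the right-inverse property \eqref{S2} $\Psi_0\Psi(\lambda)=I_\partial$, yields $Af_*=-\lambda\chi$ and $\Psi_0f_*=\lambda\xi$. Positivity of $B,\Psi$ together with $v_{N-1}\le\lambda^{-1}f_*$ gives $\lambda\chi\le Bf_*$ and $\lambda\xi\le\Psi f_*$, so that $Af_*+Bf_*\ge0$ and $\Psi f_*-\Psi_0f_*\ge0$. Finally condition \eqref{eq:zero}, applied to the nonnegative $f_*\in\mathcal D$, forces both integrals to vanish, hence $Af_*+Bf_*=0$ and $\Psi_0f_*=\Psi f_*$. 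Thus $f_*\in\mathcal D(A_\Psi)$ and $Gf_*=A_\Psi f_*+Bf_*=0$. This upgrading of inequalities to equalities through \eqref{eq:zero} is the heart of the argument and parallels the proof of Theorem~\ref{thm:exunifp}.

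Next I would verify $K$-invariance. From $Af_*=-Bf_*$ and $\Psi_0f_*=\Psi f_*$ one has $(\lambda\mathcal I-\mathcal A)(f_*,0)=(\lambda f_*+Bf_*,\Psi f_*)$; applying $R(\lambda,\mathcal A)$ and using \eqref{eq:RA} gives $R(\lambda,\mathcal A)(Bf_*,\Psi f_*)=(f_*-\lambda R(\lambda,A_0)f_*,0)$, and then by \eqref{eq:BRA}, $\mathcal BR(\lambda,\mathcal A)(Bf_*,\Psi f_*)=(Bf_*-\lambda BR(\lambda,A_0)f_*,\ \Psi f_*-\lambda\Psi R(\lambda,A_0)f_*)$. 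Since $\mathcal BR(\lambda,\mathcal A)(f_*,0)$ increases to $K(f_*,0)\in L^1\times L^1_\partial$ as $\lambda\downarrow0$ by Theorem~\ref{t:opK}, it stays norm-bounded, so $\lambda BR(\lambda,A_0)f_*$ and $\lambda\Psi R(\lambda,A_0)f_*$ tend to $0$. Letting $\lambda\downarrow0$ and recalling \eqref{eq:K} then gives $K(Bf_*,\Psi f_*)=(Bf_*,\Psi f_*)$.

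It remains to establish nontriviality, and this is where stochasticity of $K$ is used. If $(Bf_*,\Psi f_*)=0$, then by the relations of the second step $f_*\in\mathcal D(A_0)$ with $A_0f_*=0$, so $R(\lambda,A_0)f_*=\lambda^{-1}f_*$ and $\mathcal BR(\lambda,\mathcal A)(f_*,0)=\lambda^{-1}(Bf_*,\Psi f_*)=0$, whence $K(f_*,0)=0$; this contradicts $\|K(f_*,0)\|=\|f_*\|=1$, which holds because $K$ is stochastic. Therefore $c_*:=\|(Bf_*,\Psi f_*)\|>0$, and since $(Bf_*,\Psi f_*)\ge0$ is $K$-invariant, $(Bf_*,\Psi f_*)/c_*$ is an invariant density for $K$, as claimed.
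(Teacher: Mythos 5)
Your proposal is correct, and it follows the paper's strategy in its first half while taking a genuinely different route in the second. Like the paper, you exploit the monotone resolvent series: the paper sets $(f_N,f_{\partial,N})=\sum_{n=0}^N(\mathcal{B}R(\lambda,\mathcal{A}))^n(\lambda f_*,0)$ and $(\tilde f_N,0)=R(\lambda,\mathcal{A})(f_N,f_{\partial,N})$, uses the domination $\mathcal{B}(\tilde f_N,0)\le\mathcal{B}(f_*,0)\in L^1\times L^1_{\partial}$ to extract a norm limit $(f,f_{\partial})$, and obtains $(f_*,0)=R(\lambda,\mathcal{A})(f+\lambda f_*,f_{\partial})$, hence $f_*\in\mathcal{D}$ --- this is exactly your recursion $v_N=R(\lambda,A_0)(f_*+Bv_{N-1})+\Psi(\lambda)\Psi v_{N-1}$ with $(\lambda\chi,\lambda\xi)=(f,f_{\partial})$, written in $L^1$ rather than product-space notation (both arguments read $Bf_*,\Psi f_*$ via the positive extension before $f_*\in\mathcal{D}$ is established, so your use of the hypothesis is no worse than the paper's). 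From there the proofs diverge. The paper stops at $f_*\in\mathcal{D}$, applies $\mathcal{B}$ to the limit identity to get the sub-invariance $\mathcal{B}(f_*,0)\le K\mathcal{B}(f_*,0)+\lambda K(f_*,0)$, lets $\lambda\downarrow 0$, and uses stochasticity of $K$ twice: implicitly to upgrade $\mathcal{B}(f_*,0)\le K\mathcal{B}(f_*,0)$ to equality (a positive difference with zero integral), and, for nontriviality, through the equivalence of Theorem~\ref{t:opK} ($K$ stochastic iff $\{S(t)\}_{t\ge0}$ strongly stable) to conclude $f_*=\lambda R(\lambda,A_0)f_*$ would force $f_*=0$. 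You instead push further: applying $A$ and $\Psi_0$ to the limit identity gives $Af_*=-\lambda\chi$, $\Psi_0 f_*=\lambda\xi$, and condition~\eqref{eq:zero} upgrades the resulting inequalities to $A_\Psi f_*+Bf_*=0$ and $\Psi_0f_*=\Psi f_*$ exactly as in the proof of Theorem~\ref{thm:exunifp}; the fixed-point equation $K(Bf_*,\Psi f_*)=(Bf_*,\Psi f_*)$ then follows by a direct resolvent computation, and stochasticity of $K$ is needed only once, for the clean contradiction $K(f_*,0)=0$ versus $\|K(f_*,0)\|=\|f_*\|=1$. Your route buys two things: the stronger intermediate conclusion $f_*\in\mathcal{D}(A_\Psi)$ with $Gf_*=A_\Psi f_*+Bf_*=0$ (so the invariant density actually satisfies the boundary condition), and independence from the strong-stability equivalence of Theorem~\ref{t:opK}, whose stability direction formally assumes \eqref{eq:zeroo}, which is not among the hypotheses of the theorem; the paper's route is shorter but leaves the final upgrade to equality implicit.
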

\begin{proof}
Let $f_0=\lambda  {f}_*$, where $\lambda>0$ is fixed.  We define
\[
(f_N,f_{\partial,N})=\sum_{n=0}^N (\mathcal{B}R(\lambda,\mathcal{A})))^n (f_0,0)\quad \text{and}\quad (\tilde{f}_N,0)=R(\lambda,\mathcal{A})(f_N,f_{\partial,N})\quad N\ge 0.
\]
We have $\tilde{f}_N\uparrow \lambda R(\lambda,G){f}_*= {f}_*$ and   $\mathcal{B}(\tilde{f}_N,0)\le \mathcal{B}(\tilde{f}_{N+1},0)\le  \mathcal{B}( {f}_*,0)$ for all $N$. Since $\mathcal{B}( {f}_*,0)\in L^1\times L^1_{\partial}$, we see that there exists nonnegative  $(f,f_{\partial})\in L^1\times L^1_{\partial}$ such that $\mathcal{B}(\tilde{f}_N,0)\to (f,f_{\partial})$ as $N\to \infty$.
We have
\[
\mathcal{B}(\tilde{f}_N,0)=\mathcal{B}R(\lambda,\mathcal{A})(f_N,f_{\partial,N})=(f_{N+1}-f_0,f_{\partial,N+1})
\]
for all $N$. Thus, $(f_N,f_{\partial,N})\to (f+f_0,f_{\partial})$,
\begin{equation*}
(f_*,0)=R(\lambda,\mathcal{A}) (f,f_{\partial})+R(\lambda,\mathcal{A})(f_0,0),
\end{equation*}
and $f_*\in \mathcal{D}$.
Next, we show that $\|(f,f_{\partial})\|>0$. Suppose, contrary to our claim that $\|(f,f_{\partial})\|=0$.
Then $ {f}_*=\lambda R(\lambda,A_0) {f}_*$, implying that $ {f}_*$ is an invariant density for the semigroup $\{S(t)\}_{t\ge 0}$ generated by the operator $(A_0,\mathcal{D}(A_0))$. By Theorem~\ref{t:opK}, $\{S(t)\}_{t\ge 0}$ is strongly stable, giving $ {f}_*=0$ and leading to a contradiction.
Finally, since $(f,f_{\partial})\le \mathcal{B}({f}_*,0)=(Bf_*,\Psi f_*)$, we see that $\|\mathcal{B}({f}_*,0)\|>0$ and
$
\mathcal{B}({f}_*,0)\le K \mathcal{B}({f}_*,0)+\lambda K ({f}_*,0),
$
where  letting $\lambda\downarrow 0$ completes the proof.
\end{proof}

\section{Proofs of  main results}\label{s:proofs}

We consider the minimal process $X=\{X(t)\}_{t\ge 0}$ with characteristics $(\phi,q,\mathcal{P})$ as described in Section~\ref{s:PDMP} and  such that Assumptions~\ref{a:nons}--\ref{a:jump} from Sections~\ref{s:exi} hold true.
To use results from Section~\ref{s:pert} we take  $L^1=L^1(E,m)$,  $L^1_{\partial}=L^1(\nbound,m^{-})$, $\Psi_0=\gamma^{-}$, and
the operators $A$, $B$, $\Psi$ as described in Theorem~\ref{th:ptsmal} in Section ~\ref{s:gen}.
We check that Theorem~\ref{t:gmain} applies and  provides the existence of a substochastic semigroup that will be the semigroup induced by the minimal process $X$ implying Theorems~\ref{t:mPg} and~\ref{th:ptsmal}.
In Section~\ref{s:inv} we use the results from Section~\ref{s:dens} to prove Theorems~\ref{t:Kkernel}--\ref{c:invconv} from Section~\ref{s:exiinv}.

\subsection{Existence of a substochastic semigroup}\label{s:deter}

In this section we  check that  assumptions of Theorem~\ref{th:ptsmal} imply
conditions \eqref{S1}--\eqref{S4} of Theorem~\ref{t:gmain} leading to the following result.

\begin{theorem}\label{th:ptsmall} Suppose that Assumptions~\ref{a:nons}--\ref{a:jump} hold. Let $B$ and $\Psi$ be as in \eqref{d:part} and $(A,\mathcal{D})$ be defined by \eqref{d:operatorA}--\eqref{d:domainopA}.  Then 
there exists a substochastic semigroup $\{P(t)\}_{t\ge 0}$ with generator $(G,\mathcal{D}(G))$ being an extension of the operator $(A_{\Psi}+B,\mathcal{D}(A_{\Psi}))$ where $\mathcal{D}(A_{\Psi})=\{f\in \mathcal{D}:\gamma^{-}f=\Psi f\}$. The resolvent operator of $G$ at $\lambda>0$ is given by
\eqref{d:resg}. Moreover, condition \eqref{eq:zeroo} holds.
\end{theorem}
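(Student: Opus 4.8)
The plan is to place ourselves in the abstract framework of Section~\ref{s:ibp} with the identifications $L^1=L^1(E,m)$, $L^1_{\partial}=L^1(\nbound,m^{-})$, $\Psi_0=\gamma^{-}$, and with $A$, $B$, $\Psi$ as in Theorem~\ref{th:ptsmal}, and then to verify conditions \eqref{S1}--\eqref{S4} so that Theorem~\ref{t:gmain} applies directly. Condition \eqref{S1} requires that the restriction $A_0$ of $A$ to $\{f\in\mathcal{D}:\gamma^{-}f=0\}$ generate a substochastic semigroup; this is exactly the content of Theorem~\ref{t:gs0t}, so nothing further is needed there. The resolvent formula \eqref{d:resg}, the semigroup, and the description of $\mathcal{D}(A_{\Psi})$ are then immediate outputs of Theorem~\ref{t:gmain}, so the whole argument reduces to checking \eqref{S2} and establishing the equality \eqref{eq:zeroo} (which in particular yields the inequality \eqref{S4}).

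For \eqref{S2}, assuming $\nbound\neq\emptyset$, I would exhibit the positive right inverse of $\gamma^{-}$ on $\Ker(\lambda-A)$ explicitly by transporting boundary data along the flow from the incoming boundary: for $\lambda>0$ and $f_{\partial}\in L^1(\nbound,m^{-})$ set
\[
\Psi(\lambda)f_{\partial}(x)=\mathbf{1}_{\{\nlife(x)<\infty\}}\,e^{-\lambda \nlife(x)-\int_0^{\nlife(x)}\vartheta(\phi_{-r}(x))\,dr}\,f_{\partial}(\phi_{-\nlife(x)}(x))\,J_{-\nlife(x)}(x),\quad x\in E,
\]
which is the $\lambda$-weighted analogue of the boundary term in \eqref{d:R0p}. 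Using the integration-along-the-flow formula \eqref{e:eminus} from Assumption~\ref{a:dive}, I would check that $\Psi(\lambda)f_{\partial}\in L^1(E,m)$ with $\vartheta\Psi(\lambda)f_{\partial}\in L^1(E,m)$, that $\Tm\Psi(\lambda)f_{\partial}=(\lambda+\vartheta)\Psi(\lambda)f_{\partial}$, and hence $\Psi(\lambda)f_{\partial}\in\Ker(\lambda-A)\subset\mathcal{D}$; the detailed transport and trace computations are those carried out for flows in \ref{s:appen}. The right-inverse identity $\gamma^{-}\Psi(\lambda)f_{\partial}=f_{\partial}$ then follows from the definition \eqref{d:traces} and the cocycle relation \eqref{d:jacob}: for $z\in\nbound$ and small $s>0$ one has $\nlife(\phi_s(z))=s$, $\phi_{-\nlife(\phi_s(z))}(\phi_s(z))=z$, and $J_{-s}(\phi_s(z))J_s(z)=J_0(z)=1$, so the Jacobian factors cancel and the limit returns $f_{\partial}(z)$. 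Positivity is clear from the formula.

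For \eqref{eq:zeroo} I would compute the left-hand side termwise. Green's identity \eqref{e:Afin0} from Theorem~\ref{l:Green0} rewrites $\int_E\Tm f\,dm=\int_{\nbound}\gamma^{-}f\,dm^{-}-\int_{\pbound}\gamma^{+}f\,dm^{+}$, so the transport contribution absorbs the boundary term $-\int_{\nbound}\gamma^{-}f\,dm^{-}$ and leaves $-\int_{\pbound}\gamma^{+}f\,dm^{+}$ together with $-\int_E\vartheta f\,dm$. For the jump part I would take $\F=E$ in \eqref{e:jump}, where $\jump(x,E)=1$ since $\jump(x,\{\Delta\})=0$, and substitute $f\mapsto\vartheta f$, $f_{\partial^+}\mapsto\gamma^{+}f$; using $m(\partial E)=0$ to replace $\int_{E^0}$ by $\int_E$, this produces the mass balance $\int_E Bf\,dm+\int_{\nbound}\Psi f\,dm^{-}=\int_E\vartheta f\,dm+\int_{\pbound}\gamma^{+}f\,dm^{+}$. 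Adding everything, the term $-\int_E\vartheta f\,dm$ coming from $Af$ cancels the $+\int_E\vartheta f\,dm$ supplied by the mass balance, while $-\int_{\pbound}\gamma^{+}f\,dm^{+}$ from Green's identity cancels the $+\int_{\pbound}\gamma^{+}f\,dm^{+}$ from the mass balance, giving exactly $0$.

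With \eqref{S1}--\eqref{S4} in hand, Theorem~\ref{t:gmain} supplies the substochastic semigroup $\{P(t)\}_{t\ge 0}$ whose generator $(G,\mathcal{D}(G))$ extends $(A_{\Psi}+B,\mathcal{D}(A_{\Psi}))$ and whose resolvent is \eqref{d:resg}, and the equality \eqref{eq:zeroo} is precisely what the previous paragraph establishes. I expect the only genuinely technical step to be \eqref{S2}: verifying that $\Psi(\lambda)f_{\partial}$ lies in $\mathcal{D}_{\max}$ with the correct transport derivative and integrability, and that its incoming trace equals $f_{\partial}$, requires the flow-based change-of-variables and trace estimates of \ref{s:appen}, whereas \eqref{S1} is quoted from Theorem~\ref{t:gs0t} and \eqref{eq:zeroo} is a bookkeeping cancellation between Green's identity and the conservation-of-mass form of Assumption~\ref{a:jump}.
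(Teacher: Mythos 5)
Your proposal is correct and follows essentially the same route as the paper: the same identification $L^1_{\partial}=L^1(\nbound,m^{-})$, $\Psi_0=\gamma^{-}$, condition \eqref{S1} quoted from Theorem~\ref{t:gs0t}, the right inverse $\Psi(\lambda)$ given by exactly the paper's formula \eqref{e:psilam} with the trace identity and kernel property verified via \eqref{e:eminus} and integration along the flow (the paper's Lemmas~\ref{r:psil} and~\ref{t:psil}), and the equality \eqref{eq:zeroo} obtained by combining Green's identity \eqref{e:Afin0} with the $\F=E$ mass balance from Assumption~\ref{a:jump} using $\jump(x,E)=1$, before invoking Theorem~\ref{t:gmain}. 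The only difference is presentational: you defer the transport and trace computations for $\Psi(\lambda)f_{\partial}$ to the appendix machinery, whereas the paper carries them out explicitly, but the substance is the same.
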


Before we give the proof of Theorem~\ref{th:ptsmall}, we first provide a general formula for the right inverse $\Psi(\lambda)$ introduced in condition~\eqref{S2} in Section~\ref{s:ibp}.
 Suppose that Assumptions~\ref{a:nons}--\ref{a:dive} hold and, for each $\lambda>0$,  define
\begin{equation}\label{e:psilam}
\Psi(\lambda)f_{\partial}(x)=e^{-\lambda \nlife(x)-\int_0^{\nlife(x)}\vartheta(\phi_{-r}(x))dr}f_{\partial}(\phi_{-\nlife(x)}(x))J_{-\nlife(x)}(x),\quad x\in E, f_{\partial}\in L^{1}(\nbound ,m^{-}),
\end{equation}
 where the right-hand side of \eqref{e:psilam} is equal to zero if $\nlife(x)=\infty$.

\begin{lemma}\label{r:psil}
Let $f_\partial \in L^1(\nbound ,m^{-})$ and $\lambda>0$. If  $\Psi(\lambda)f_{\partial}$ is as in~\eqref{e:psilam} then  $\gamma^{-}\Psi(\lambda)f_{\partial}=f_{\partial}$  and
\begin{equation}\label{e:gppsilam}
\gamma^{+}\Psi(\lambda)f_{\partial}(z)=e^{-\lambda \nlife(z)-\int_0^{\nlife(z)} \vartheta(\phi_{-r}(z))dr}f_{\partial}(\phi_{-\nlife(z)}(z))J_{-\nlife(z)}(z),\quad z\in \pbound .
\end{equation}
Moreover,  $\gamma^{+}\Psi(\lambda)f_{\partial}\in L^1(\pbound,m^+)$,
\begin{equation}\label{e:Grb0}
\int_{\pbound }\gamma^{+}\Psi(\lambda)f_\partial(z)m^{+}(dz)=\int_{\nbound }e^{-\int_0^{\life(z)}(\lambda+\vartheta(\phi_{r}(z)))dr}f_{\partial}(z) m^{-}(dz)
\end{equation}
and
\begin{equation}\label{e:Grb}
\int_{\pbound }\gamma^{+}\Psi(\lambda)f_{\partial}(z)m^{+}(dz)+\int_{E}(\lambda+\vartheta(x))\Psi(\lambda)f_{\partial}(x)m(dx)=\int_{\nbound }f_{\partial}(z)m^{-}(dz).
\end{equation}
\end{lemma}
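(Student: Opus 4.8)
The plan is to regard $\Psi(\lambda)f_\partial$ as the solution, propagated along characteristics with an exponential integrating factor, of the stationary problem $(\lambda-A)u=0$ with incoming trace $f_\partial$, and then to extract each assertion from the flow parametrization of $E$ together with Green's identity (Theorem~\ref{l:Green0}). In the notation of Section~\ref{s:deter} we have $\Psi_0=\gamma^{-}$, so the first assertion simply says that \eqref{e:psilam} is the positive right inverse required in condition~\eqref{S2}, mapping $L^1_\partial$ into $\Ker(\lambda-A)$.

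First I would record the elementary behaviour of the hitting times near the boundary: for $z\in\nbound$ and small $s>0$ one has $\nlife(\phi_s(z))=s$ with $\phi_{-\nlife(\phi_s(z))}(\phi_s(z))=z$, while for $z\in\pbound$ and small $s>0$ one has $\nlife(\phi_{-s}(z))=\nlife(z)-s$ with $\phi_{-\nlife(\phi_{-s}(z))}(\phi_{-s}(z))=\phi_{-\nlife(z)}(z)$. Combining these with the cocycle identity \eqref{d:jacob} (which yields $J_{-s}(\phi_s(z))=1/J_s(z)$ and the analogous composition rules for the Jacobian), substituting into \eqref{e:psilam} and passing to the limit $s\to0^{+}$ in the definition \eqref{d:traces} of the traces gives $\gamma^{-}\Psi(\lambda)f_\partial=f_\partial$ and the explicit formula \eqref{e:gppsilam} for $\gamma^{+}\Psi(\lambda)f_\partial$; the exponential factors tend to the stated values because $s\mapsto\int_0^s\vartheta(\phi_r(z))\,dr$ is continuous and vanishes at $s=0$ by Assumption~\ref{a:varp}. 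This settles the first two assertions.

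Next I would parametrize every $x\in E$ with $\nlife(x)<\infty$ as $x=\phi_s(y)$ with $y\in\nbound$, $0<s<\life(y)$, so that along this characteristic $\Psi(\lambda)f_\partial(\phi_s(y))J_s(y)=e^{-\lambda s-\int_0^s\vartheta(\phi_u(y))\,du}f_\partial(y)$. Integrating against $ds\,m^{-}(dy)$ and invoking the change-of-variables formula \eqref{e:eminus} shows both that $\Psi(\lambda)f_\partial\in L^1(E,m)$ with $\lambda\|\Psi(\lambda)f_\partial\|\le\|f_\partial\|$, and, upon differentiating the integrating factor, that $\Psi(\lambda)f_\partial\in\mathcal{D}_{\max}$ with $\Tm\Psi(\lambda)f_\partial=(\lambda+\vartheta)\Psi(\lambda)f_\partial$, i.e. $\Psi(\lambda)f_\partial\in\Ker(\lambda-A)$ (membership in $\mathcal{D}_{\max}$ and existence of the traces being justified as in \ref{s:appen} and \ref{s:flow}). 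Since $\gamma^{-}\Psi(\lambda)f_\partial=f_\partial\in L^1(\nbound,m^{-})$, Theorem~\ref{l:Green0} then yields at once that $\gamma^{+}\Psi(\lambda)f_\partial\in L^1(\pbound,m^{+})$, which is the third assertion, and, because $\Tm\Psi(\lambda)f_\partial=(\lambda+\vartheta)\Psi(\lambda)f_\partial$, the identity \eqref{e:Afin0} becomes exactly \eqref{e:Grb}.

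Finally, to obtain \eqref{e:Grb0} I would evaluate $\int_E(\lambda+\vartheta)\Psi(\lambda)f_\partial\,dm$ once more through \eqref{e:eminus}: on each characteristic the integrand equals $-\tfrac{d}{ds}\bigl(e^{-\lambda s-\int_0^s\vartheta(\phi_u(y))\,du}\bigr)f_\partial(y)$, so the inner $s$-integral telescopes to $\bigl(1-e^{-\lambda\life(y)-\int_0^{\life(y)}\vartheta(\phi_u(y))\,du}\bigr)f_\partial(y)$, the boundary term at $s=\life(y)$ vanishing when $\life(y)=\infty$ since $\lambda>0$. Subtracting this value from \eqref{e:Grb} isolates $\int_\pbound\gamma^{+}\Psi(\lambda)f_\partial\,m^{+}$ as the asserted integral over $\nbound$; it suffices to argue for nonnegative $f_\partial$ and extend by linearity. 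The main obstacle is the middle step, namely verifying rigorously that the explicit function \eqref{e:psilam}, built from a merely $L^1$ datum $f_\partial$ and possibly infinite life times, genuinely belongs to $\mathcal{D}_{\max}$ with the claimed transport action — that is, checking the weak identity \eqref{d:Tm} against all test functions in $\mathfrak{N}$ and controlling the integrability of $\vartheta\Psi(\lambda)f_\partial$ — since the traces and both integral identities follow routinely once this membership is secured.
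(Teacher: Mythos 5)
Your first step (the trace computations via $\nlife(\phi_s(z))=s$ for $z\in\nbound$, $\nlife(\phi_{-s}(z))=\nlife(z)-s$ for $z\in\pbound$, and the cocycle identity \eqref{d:jacob}) matches the paper exactly, and your evaluation of $\int_E(\lambda+\vartheta(x))\Psi(\lambda)f_\partial(x)\,m(dx)$ through \eqref{e:eminus} and the telescoping identity \eqref{e:phiint} is also the paper's own computation. The genuine gap is your route to the two integral identities: you derive \eqref{e:Grb} by applying Green's identity (Theorem~\ref{l:Green0}) to $\Psi(\lambda)f_\partial\in\Ker(\lambda-A)$ and then recover \eqref{e:Grb0} by subtraction, but within this paper that is circular. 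The proof of Theorem~\ref{l:Green0} given in \ref{s:appen} decomposes $f=f_1+f_2$ with $f_2=\Psi(\lambda)\gamma^{-}f$ and explicitly invokes Lemma~\ref{r:psil} with $\vartheta\equiv 0$, including equation \eqref{e:Grb} itself, to handle the kernel part $f_2$ --- so Green's identity is downstream of the very lemma you are proving. The paper avoids this by establishing \eqref{e:Grb0} \emph{first}, directly, via the boundary-to-boundary change of variables \eqref{e:gammam} (quoted from \cite[Proposition 3.12]{arlottibanasiaklods07}) applied to $g(z)=e^{-\int_0^{\life(z)}\vartheta(\phi_r(z))dr}f_{\partial}(z)$, and then obtains \eqref{e:Grb} by \emph{adding} the interior integral; no membership in $\mathcal{D}_{\max}$, no transport action, and no Green's identity enter the argument at any point.

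A second, lesser issue: even setting the circularity aside, your application of Theorem~\ref{l:Green0} needs $\Psi(\lambda)f_\partial\in\mathcal{D}_{\max}$ with $\Tm\Psi(\lambda)f_\partial=(\lambda+\vartheta)\Psi(\lambda)f_\partial$, which you rightly flag as ``the main obstacle'' but leave as a sketch. In the paper this is a separate, subsequent statement (Lemma~\ref{t:psil}) whose proof itself uses Lemma~\ref{r:psil} to secure $\vartheta\Psi(\lambda)f_\partial\in L^1(E,m)$ and the existence of the traces before verifying the weak identity \eqref{d:Tm} by integration by parts against test functions in $\mathfrak{N}$. That verification could in principle be made independent of the lemma (your pointwise identity along characteristics together with \eqref{e:eminus} suffices), but you would then still need an independent proof of Green's identity --- for instance the argument behind \cite[Proposition 4.6]{arlottibanasiaklods07} --- to rescue the plan. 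As written, the proposal correctly establishes the two trace assertions and the interior integral, but the conclusions \eqref{e:Grb0}, \eqref{e:Grb} and $\gamma^{+}\Psi(\lambda)f_{\partial}\in L^1(\pbound,m^{+})$ rest on a theorem that, in this paper's logical architecture, depends on the lemma being proved.
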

\begin{proof} Let $f=\Psi(\lambda)f_{\partial}$.
Since $\nlife(\phi_s(z))=s$ for $s<\life(z)$ and $z\in \nbound $, we get, by \eqref{e:psilam} and \eqref{d:jacob},
\begin{equation}\label{e:psilgm}
f(\phi_s(z))J_s(z)=e^{-\lambda s-\int_0^{s}\vartheta(\phi_{r}(z))dr}f_{\partial}(z).
\end{equation}
Thus $\gamma^{-}f(z)=f_{\partial}(z)$ for $z\in \nbound $, by letting $s\to 0$ in  \eqref{e:psilgm}.
Similarly, since $\nlife(\phi_{-s}(z))=\nlife(z)-s$ for $z\in \pbound $ and $s<\nlife(z)<\infty$, we obtain
\[
f(\phi_{-s}(z))J_{-s}(z)=e^{-\int_s^{\nlife(z)}(\lambda + \vartheta(\phi_{-r} (z)))dr}f_{\partial}(\phi_{-\nlife(z)}(z))J_{-\nlife(z)}(z),
\]
showing that \eqref{e:gppsilam} holds.

Assume now that $f_{\partial}\ge 0$. Then $f\ge 0$. Recall from \eqref{d:spaceEpm} that $E\setminus E_{-}\subset \nbound \cup\{x: \nlife(x)=\infty\}$. We have $m(\nbound )=0$ and $f(x)=0$ if $\nlife(x)=\infty$. Thus, by \eqref{e:eminus} and \eqref{e:psilgm}, we obtain
\[
\begin{split}
\int_{E}(\lambda+\vartheta(x))f(x)m(dx)&=\int_{\nbound }\int_0^{\life(z)}(\lambda+\vartheta(\phi_s (z)))e^{-\lambda s-\int_0^{s}\vartheta(\phi_{r}(z))dr}f_{\partial}(z)
ds\,m^{-}(dz).
\end{split}
\]
It follows from Assumption \ref{a:varp} that
\[
 (\lambda+\vartheta(\phi_s(x)))e^{-\int_0^s(\lambda + \vartheta(\phi_r(x)))dr}=-\frac{d}{ds}e^{-\int_0^s(\lambda + \vartheta(\phi_r(x)))dr}
\]
for Lebesgue almost every $s$ and for all $x$. Hence, for all $x$ we have
\begin{equation}\label{e:phiint}
\int_0^{\life(x)} (\lambda+\vartheta(\phi_s(x)))e^{-\int_0^s(\lambda + \vartheta(\phi_r(x)))dr}ds= 1-e^{-\int_0^{\life(x)}(\lambda + \vartheta(\phi_r(x)))dr}.
\end{equation}
Therefore
\[
\begin{split}
\int_{E}(\lambda+\vartheta(x))f(x)m(dx)&=\int_{\nbound } \left(1-e^{-\lambda \life(z)-\int_0^{\life(z)}\vartheta(\phi_{r}(z))dr}\right)f_{\partial}(z) m^{-}(dz)\le \|f_{\partial}\|.
\end{split}
\]
Observe that for any $\lambda >0$ and nonnegative measurable $g$  we have (\cite[Proposition 3.12]{arlottibanasiaklods07})
\begin{equation}\label{e:gammam}
\int_{\pbound } e^{-\lambda \nlife(z)} g(\phi_{-\nlife(z)}(z))J_{-\nlife(z)}(z)\,m^{+}(dz)=\int_{\nbound }e^{-\lambda \life(z)} g(z)m^{-}(dz).
\end{equation}
By applying \eqref{e:gammam} to $g(z)=e^{-\int_0^{\life(z)}\vartheta(\phi_{r}(z))dr}f_{\partial}(z)$ we see that \eqref{e:Grb0} holds, implying~\eqref{e:Grb}.
Now decomposing $f_\partial$ as the difference of a positive and a negative part,  completes the proof.
\end{proof}

Our next result shows that condition \eqref{S2} from Section~\ref{s:ibp} holds.

\begin{lemma}\label{t:psil}
Let  $(A,\mathcal{D})$ be given by \eqref{d:operatorA}--\eqref{d:domainopA} and let $\Psi_0(f)=\gamma^{-}f$ for $f\in \mathcal{D}$. Then for any $\lambda>0$,  $\Psi(\lambda)$ given by \eqref{e:psilam} is the right-inverse of the operator $\Psi_0$  restricted to the nullspace of $\lambda -A$.
\end{lemma}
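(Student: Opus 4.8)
The plan is to fix $f_{\partial}\in L^1(\nbound,m^-)$, set $f=\Psi(\lambda)f_{\partial}$, and verify the two requirements of condition~\eqref{S2}: that $f\in\Ker(\lambda-A)$ and that $\Psi_0 f=\gamma^-f=f_{\partial}$. The second is already supplied by Lemma~\ref{r:psil}, which also gives $\gamma^-f=f_{\partial}\in L^1(\nbound,m^-)$; moreover, applying the identity \eqref{e:Grb} to $|f_{\partial}|$ and using positivity of $\Psi(\lambda)$ yields $\int_E(\lambda+\vartheta(x))|f(x)|\,m(dx)\le\|f_{\partial}\|<\infty$, whence $f\in L^1(E,m)$ and $\vartheta f\in L^1(E,m)$. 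Thus the whole lemma reduces to showing $f\in\mathcal{D}_{\max}$ with $\Tm f=(\lambda+\vartheta)f$, for then $f\in\mathcal{D}$ and $Af=\Tm f-\vartheta f=\lambda f$, i.e. $f\in\Ker(\lambda-A)$. Positivity of $\Psi(\lambda)$ is immediate from \eqref{e:psilam}.

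By linearity I may assume $f_{\partial}\ge 0$, and I would identify $\Tm f$ by checking the weak formulation \eqref{d:Tm} with the candidate $g=(\lambda+\vartheta)f\in L^1(E,m)$. Fix $\psi\in\mathfrak{N}$. Since $E\setminus E_-\subset\nbound\cup\{x:\nlife(x)=\infty\}$, $m(\nbound)=0$, and $f$ vanishes where $\nlife=\infty$, all integrations may be restricted to $E_-$ and rewritten via the change-of-variables formula \eqref{e:eminus}. Writing $u(s):=f(\phi_s(z))J_s(z)=e^{-\lambda s-\int_0^s\vartheta(\phi_r(z))dr}f_{\partial}(z)$ from \eqref{e:psilgm}, and using the flow relation \eqref{e:ds} to get $\frac{d(\psi\circ\phi_t)}{dt}|_{t=0}(\phi_s(z))=\frac{d}{ds}\psi(\phi_s(z))$, the right-hand side of \eqref{d:Tm} becomes
\[
\int_\nbound\int_0^{\life(z)}u(s)\,\frac{d}{ds}\psi(\phi_s(z))\,ds\,m^-(dz).
\]
An integration by parts in $s$, using $u'(s)=-(\lambda+\vartheta(\phi_s(z)))u(s)$ for a.e.\ $s$ (valid by Assumption~\ref{a:varp}), turns this into $\int_\nbound\int_0^{\life(z)}(\lambda+\vartheta(\phi_s(z)))u(s)\psi(\phi_s(z))\,ds\,m^-(dz)$, which by \eqref{e:eminus} is exactly $\int_E(\lambda+\vartheta(x))f(x)\psi(x)\,m(dx)$. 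This is the required identity, so $f\in\mathcal{D}_{\max}$ and $\Tm f=(\lambda+\vartheta)f$, completing the argument.

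The delicate point, and the one I would treat most carefully, is the vanishing of the boundary terms in the integration by parts. As $s\to 0^+$ the trajectory $\phi_s(z)$ approaches $z\in\nbound\subset\overline{E}^0\setminus E^0$, and as $s\to\life(z)^-$ (when $\life(z)<\infty$) it approaches $\phi_{\life(z)}(z)\in\pbound$; since $\psi$ has compact support in $E^0$, continuity of the flow forces $\psi(\phi_s(z))=0$ for $s$ near both endpoints, so that $s\mapsto\psi(\phi_s(z))$ in fact has support compactly contained in $(0,\life(z))$ whenever $\life(z)<\infty$. When $\life(z)=\infty$ the behaviour at infinity is instead controlled by the decay $u(s)\to 0$ as $s\to\infty$, which holds because $\lambda>0$ and $\psi$ is bounded. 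One should also record that $u$ is absolutely continuous (again by Assumption~\ref{a:varp}) and that $s\mapsto\psi(\phi_s(z))$ is $C^1$ on $(0,\life(z))$ with derivative given by $\frac{d(\psi\circ\phi_t)}{dt}|_{t=0}$ evaluated at $\phi_s(z)$, which follows from the definition of $\mathfrak{N}$ applied at the interior points $x=\phi_s(z)\in E^0$; this legitimizes the integration by parts on compact subintervals, from which the endpoint limits are then taken.
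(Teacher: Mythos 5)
Your proposal is correct and follows essentially the same route as the paper's proof: reduce via Lemma~\ref{r:psil} to showing $f=\Psi(\lambda)f_{\partial}\in\mathcal{D}_{\max}$ with $\Tm f=(\lambda+\vartheta)f$, verify the weak identity \eqref{d:Tm} by the change of variables \eqref{e:eminus}, and integrate by parts in $s$ using Assumption~\ref{a:varp}, with the boundary terms vanishing because $\psi$ has compact support in $E^0$ and $e^{-\lambda s}$ decays. Your treatment of the endpoint limits and of the integration by parts on compact subintervals is in fact slightly more explicit than the paper's, but it is the same argument.
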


\begin{proof}
Let $f_{\partial}\in L^1(\nbound ,m^{-})$ and $f=\Psi(\lambda)f_{\partial}$ with $\lambda>0$.
Lemma~\ref{r:psil} implies that $f\in L^1(E,m)$, $\vartheta f\in L^1(E,m)$ and $f\in \mathcal{D}(\gamma^{\pm})$.
It remains  to show that $f\in \Ker(\lambda -A)$, or, equivalently, that $f\in \mathcal{D}_{\max}$ and $\Tm f=(\lambda+\vartheta) f$.   To this end, it is enough to prove that for any test function $\psi  \in\mathfrak{N}$ we have
\[
\int_{E_{-}} (\lambda+\vartheta(x))f(x)\psi  (x)m(dx)=\int_{E_{-}}f(x)\frac{d(\psi  \circ \phi_t)}{dt}\Big|_{t=0}(x)m(dx),
\]
where we use the fact  that $f(x)=0$ for $m$-a.e. $x\in E\setminus E_{-}$.
By the change of variables \eqref{e:eminus}
\[
\begin{split}
\int_{E_{-}} (\lambda+\vartheta(x))f(x)\psi  (x)m(dx)&=\int_{\nbound }\int_0^{\life(z)}-\frac{d}{ds}\left(e^{-\int_0^{s}(\lambda +\vartheta(\phi_{r}(z)))dr} \right)\psi  (\phi_s(z))ds f_{\partial}(z) m^{-}(dz).
\end{split}
\]
Integration by parts leads to
\[
\int_0^{\life(z)}-\frac{d}{ds}\left(e^{-\int_0^{s}(\lambda+\vartheta(\phi_{r}(z)))dr} \right)\psi  (\phi_s(z))\,ds=\int_0^{\life(z)}e^{-\int_0^{s}(\lambda+\vartheta(\phi_{r}(z)))dr} \frac{d}{ds}\left(\psi  (\phi_s(z))\right)ds,
\]
since $e^{-\lambda \life(z)}\psi  (\phi_{s}(z))\to 0$ as $s\to \life(z)$ and $\psi  (z)=0$ for $z\in \nbound $.  This together with \eqref{e:psilgm} gives
\[
\begin{split}
\int_{E_{-}} (\lambda+\vartheta(x))f(x)\psi  (x)m(dx)
&=\int_{\nbound }\int_0^{\life(z)} J_s(z)f(\phi_s(z))\frac{d(\psi  \circ \phi_t)}{dt}\Big|_{t=0}(\phi_s(z))\,ds.
\end{split}
\]
Using again the change of variables \eqref{e:eminus}, completes the proof.
\end{proof}

\begin{proof}[Proof of Theorem~\ref{th:ptsmall}]
It follows from Theorem~\ref{l:Green0} that for $f\in \mathcal{D}$  we have $\gamma^+f\in L^1(\pbound ,m^+)$ and
\begin{equation}\label{e:Afin}
\int_{E} Af(x)m(dx)=\int_{\nbound }\gamma^{-}f(x)m^{-}(dx)-\int_{\pbound }\gamma^{+}f(x)m^{+}(dx)-\int_{E}\vartheta(x)f(x)m(dx).
\end{equation}
If $f$ is nonnegative then it follows from \eqref{e:jump} that
\[
\begin{split}
\int_{E}Bf(x)m(dx)+\int_{\nbound }\Psi f(x)m^{-}(dx)&=\int_{E} \jump (x,E)\vartheta(x)f(x)m(dx) +\int_{\pbound }\jump (x,E) \gamma^{+}f(x)m^{+}(dx)\\
&\le \int_{E}\vartheta(x)f(x)m(dx)+\int_{\pbound }\gamma^{+}f(x)m^{+}(dx),
\end{split}
\]
where equality holds if
$\jump(x,E)=1$ for all $x\in E\cup\pbound $.
This together with Theorem~\ref{t:gs0t} and Lemma~\ref{t:psil} shows that conditions \eqref{S1}--\eqref{S4} in Section~\ref{s:ibp} are satisfied. Theorem~\ref{t:gmain} now completes the proof.
\end{proof}

We conclude this section with the following result that will be needed in the next sections.
\begin{lemma}\label{r:gamA0} Suppose that Assumptions~\ref{a:nons}--\ref{a:dive} hold.  Let  $(A_0,\mathcal{D}(A_0))$ be the generator of the substochastic semigroup in \eqref{d:ss0}.
For any nonnegative $f\in L^1(E,m)$ and $\lambda>0$ we have
\[
R(\lambda,A_0)f(x)=\int_0^{\nlife(x)}e^{-\lambda t-\int_0^{t}\vartheta(\phi_{-r}(x))dr}f(\phi_{-t}(x))J_{-t}(x)dt, \quad x\in E,
\]
 and
\[
\gamma^{+}R(\lambda,A_0)f(z)=\int_{0}^{\nlife(z)}e^{-\lambda t-\int_0^{t}\vartheta(\phi_{-r}(z))dr}f(\phi_{-t}(z))J_{-t}(z)dt,\quad z\in \pbound .
\]
Moreover,
\begin{equation*}
\int_{\pbound }\gamma^{+}R(\lambda,A_0)f(z)m^{+}(dz)=\int_{E} e^{-\lambda \life(x)-\int_{0}^{\life(x)}\vartheta(\phi_{r}(x))dr}f(x)m(dx)
\end{equation*}
and
\begin{equation*}
\int_{\pbound }\gamma^{+}R(\lambda,A_0)f(z)m^{+}(dz)+ \int_{E}(\lambda +\vartheta(x))R(\lambda,A_0)f(x)m(dx)=\int_{E}f(x)m(dx).
\end{equation*}
\end{lemma}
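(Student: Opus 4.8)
The plan is to prove the four assertions in turn, each time starting from the explicit form \eqref{d:ss0} of the semigroup $\{S(t)\}_{t\ge 0}$ generated by $(A_0,\mathcal{D}(A_0))$ and using the Laplace-transform representation of the resolvent together with the flow-based change of variables supplied by Assumption~\ref{a:dive}. First I would obtain the resolvent formula. Since $(A_0,\mathcal{D}(A_0))$ generates the substochastic (hence contraction) semigroup $\{S(t)\}_{t\ge 0}$, for $\lambda>0$ the resolvent is the Laplace transform $R(\lambda,A_0)f=\int_0^{\infty}e^{-\lambda t}S(t)f\,dt$, and for nonnegative $f$ Tonelli's theorem legitimizes all interchanges of integration. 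Substituting \eqref{d:ss0} and observing that, for $x\in E$, the backward trajectory can leave $E$ only through the incoming boundary, so that $\mathbf{1}_E(\phi_{-t}(x))=\mathbf{1}_{[0,\nlife(x))}(t)$ for Lebesgue-a.e.\ $t$, yields the claimed expression for $R(\lambda,A_0)f(x)$.

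Next I would compute the trace $\gamma^{+}R(\lambda,A_0)f$ from its definition \eqref{d:traces}, that is, as $\lim_{s\to 0^{+}}R(\lambda,A_0)f(\phi_{-s}(z))J_{-s}(z)$ for $z\in\pbound$. Writing out the resolvent formula at the point $\phi_{-s}(z)$, I would collapse the products and nested integrals via the cocycle identity \eqref{d:jacob}, which gives $J_{-t}(\phi_{-s}(z))J_{-s}(z)=J_{-t-s}(z)$ and $\phi_{-t}(\phi_{-s}(z))=\phi_{-(t+s)}(z)$, and invoke the geometric relation $\nlife(\phi_{-s}(z))=\nlife(z)-s$, valid for $z\in\pbound$ and $0<s<\nlife(z)<\infty$, already used in the proof of Lemma~\ref{r:psil}. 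After the substitution $u=t+s$ the expression becomes
\[
\int_s^{\nlife(z)}e^{-\lambda(u-s)}e^{-\int_s^{u}\vartheta(\phi_{-r}(z))dr}f(\phi_{-u}(z))J_{-u}(z)\,du ,
\]
and letting $s\downarrow 0$ (monotone convergence for $f\ge 0$) produces the stated trace formula; in effect the trace equals the resolvent formula evaluated formally at the boundary point $z$, with $\nlife(z)$ as the upper limit.

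For the third identity I would insert the trace formula into $\int_{\pbound}\gamma^{+}R(\lambda,A_0)f(z)\,m^{+}(dz)$ and apply the change-of-variables formula \eqref{e:eplus}. Each interior point is represented as $x=\phi_{-t}(z)$ with $z\in\pbound$ and $t<\nlife(z)$; for such $x$ one has $t=\life(x)$ and $z=\phi_{\life(x)}(x)$, and reversing the inner exponent through $r\mapsto\life(x)-r$ converts $\int_0^{t}\vartheta(\phi_{-r}(z))dr$ into $\int_0^{\life(x)}\vartheta(\phi_r(x))dr$. This identifies the integrand over $\pbound$ with $e^{-\lambda\life(x)-\int_0^{\life(x)}\vartheta(\phi_r(x))dr}f(x)$ on $E_+$; since this quantity vanishes wherever $\life(x)=\infty$ and $m(\pbound)=0$, the integral extends from $E_+$ to all of $E$, giving the third formula.

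Finally, the mass identity is cleanest through Green's identity. Set $u=R(\lambda,A_0)f\in\mathcal{D}(A_0)\subset\mathcal{D}\subset\mathcal{D}_{\max}$, so that $\gamma^{-}u=0$ and, from $(\lambda-A_0)u=f$ together with $A_0u=\Tm u-\vartheta u$, one gets $\Tm u=\lambda u-f+\vartheta u$. Applying Theorem~\ref{l:Green0} to $u$ is legitimate because $u\in\mathcal{D}_{\max}$ and $\gamma^{-}u\in L^1(\nbound,m^{-})$, and it yields $\int_E\Tm u\,dm=-\int_{\pbound}\gamma^{+}u\,dm^{+}$; substituting the expression for $\Tm u$ and rearranging gives exactly the fourth formula. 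The step I expect to require the most care is the trace computation of the second paragraph, where the limit $s\downarrow 0$ must be justified and the flow geometry $\nlife(\phi_{-s}(z))=\nlife(z)-s$ and the reparametrization of the exponent tracked precisely; the remaining assertions then follow directly from \eqref{d:ss0}, the change-of-variables formula \eqref{e:eplus}, and Green's identity.
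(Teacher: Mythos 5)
Your first three assertions are handled exactly as in the paper's proof: the resolvent formula is the Laplace transform of \eqref{d:ss0}; the trace formula follows from the cocycle identity \eqref{d:jacob} together with monotone convergence (your factorization, pulling out the prefactor $e^{\lambda s+\int_0^s\vartheta(\phi_{-r}(z))dr}\to 1$ so that the remaining integrand is monotone as $s\downarrow 0$, is the right way to justify the limit); and the boundary-mass formula is \eqref{e:eplus} plus the decomposition $E=E_{+}\cup E_{+\infty}$ with $e^{-\lambda\life(x)}=0$ on $E_{+\infty}=\{x\in E:\life(x)=\infty\}$. Up to there the proposal matches the paper step for step.

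The fourth (mass) identity is where your route fails \emph{inside this paper}: it is circular. Theorem~\ref{l:Green0} is proved in \ref{s:appen} by writing $f=f_1+f_2$ with $f_1=R(\lambda,\mathrm{T}_0)g$ and $f_2\in\Ker(\lambda-\Tm)$, and the key display for $f_1$,
\[
\int_{E}\lambda f_1\,dm=\int_{E}g\,dm-\int_{\pbound}\gamma^{+}f_1\,dm^{+},
\]
is obtained there by citing \emph{Lemma~\ref{r:gamA0} with $\vartheta\equiv 0$} --- precisely the identity you are trying to establish (the $f_2$ part uses Lemma~\ref{r:psil} and \eqref{e:Grb}). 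Moreover, your claim that $u=R(\lambda,A_0)f\in\mathcal{D}$ with $A_0u=\Tm u-\vartheta u$ and $\gamma^{-}u=0$ is the content of Theorem~\ref{t:gs0t}, whose proof likewise invokes Lemma~\ref{r:gamA0} (to get $\vartheta u\in L^1(E,m)$, which is not known a priori and is in effect part of the fourth identity). So as written you assume the conclusion. The paper avoids trace theory at this point and computes directly: by Fubini and the duality \eqref{eq:T0St},
\[
\int_E(\lambda+\vartheta(x))R(\lambda,A_0)f(x)\,m(dx)=\int_E\Bigl(1-e^{-\int_0^{\life(x)}(\lambda+\vartheta(\phi_r(x)))dr}\Bigr)f(x)\,m(dx),
\]
the inner $t$-integral being evaluated with the elementary identity \eqref{e:phiint}; adding your (correct) third formula then yields the fourth identity, and gives $\vartheta R(\lambda,A_0)f\in L^1(E,m)$ as a byproduct. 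Replacing your Green's-identity step by this short computation repairs the proof; otherwise you would first have to prove the $\vartheta\equiv 0$ case of the mass identity independently before Theorem~\ref{l:Green0} may legitimately be used.
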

\begin{proof} Since
\[
R(\lambda,A_0)f(x)=\int_0^\infty e^{-\lambda t}S(t)f(x)dt,
\]
the first formula follows from \eqref{d:ss0}. This together with \eqref{d:jacob} and the monotone convergence theorem  implies that the second formula is valid.
Fubini's theorem together with conditions \eqref{eq:T0St} and \eqref{e:phiint} gives
\[
\begin{split}
\int_E (\lambda +\vartheta(x))R(\lambda,A_0)f(x)m(dx)&=\int_0^\infty e^{-\lambda t}\int_E 1_{[0,\life(x))}(t) e^{-\int_0^t \vartheta(\phi_r(x))dr}(\lambda+\vartheta(\phi_t(x)))f(x)m(dx)dt\\
&=\int_E \left(1-e^{-\int_0^{\life(x)}(\lambda + \vartheta(\phi_r(x)))dr}\right)f(x)m(dx).
\end{split}
\]
It follows from \eqref{e:eplus} that
\[
\int_{\pbound }\gamma^{+}R(\lambda,A_0)f(z)m^{+}(dz)=\int_{E_{+}} e^{-\lambda \life(x)-\int_{0}^{\life(x)}\vartheta(\phi_{r}(x))dr}f(x)m(dx).
\]
Finally, we have $e^{-\lambda \life(x)}=0$ for $x\in E_{+\infty}=\{x\in E:\life(x)=+\infty\}$ and $E=E_{+}\cup  E_{+\infty}$, which
 completes the proof.
\end{proof}

\subsection{Proofs of Theorems \ref{t:mPg} and \ref{th:ptsmal}}\label{s:proofm1}
In the proof of  Theorem~\ref{t:mPg}  we will show that the semigroup $\{P(t)\}_{t\ge0}$ from Theorem~\ref{th:ptsmall} is the  semigroup induced by the process $X=\{X(t)\}_{t\ge 0}$ with characteristics $(\phi,\vartheta,\mathcal{P})$.
Recall that for any $x\in E$ and $\F \in \mathcal{B}(E)$ the transition function is  $P(t,x,\F )=\mathbb{P}_x(X(t)\in \F ,t<\tau_\infty)$, where $\mathbb{P}_x$ is the distribution of the process $X$ starting at $x$ and $\tau_\infty$ is the explosion time. Thus
\begin{equation*}
P(t,x,\F )=\sum_{n=0}^\infty \mathbb{P}_x(X(t)\in \F ,\tau_n\le t<\tau_{n+1}),\quad x\in E,\F \in \mathcal{B}(E),
\end{equation*}
where $\tau_n$ are the consecutive jump times of the process.
First,
for $\lambda>0$, $x\in E$ and $\psi \in B(E)$ we define
\begin{equation*}
U_\lambda \psi (x)=\int_{0}^\infty e^{-\lambda t}\int_E \psi (y)P(t,x,dy)dt
\end{equation*}
and we rewrite it with the help of the embedded discrete time Markov chain describing consecutive jump times and post-jump positions.
We define the transition kernel as in \cite[Equation (4.3)]{costadufour08}
\begin{equation*}
N(x,\F \times J)=\mathbb{E}_x[\mathbf{1}_{\F}(X(\tau_1))\mathbf{1}_J(\tau_1)],\quad x\in E,
\end{equation*}
for $\F \in \mathcal{B}(E)$, $J\in \mathcal{B}(\mathbb{R}_{+})$. The strong Markov property of the process $\{X(t)\}_{t\ge 0}$ at $\tau_n$ implies that the sequence  $(X(\tau_n),\tau_n)$, $n\ge 0$, is a (sub)Markov chain on $E\times \mathbb{R}_+$ satisfying
the iterative formula
\[
\begin{split}
N^{n}(x,\F \times J)&=\mathbb{P}_x(X(\tau_n)\in \F ,\tau_n\in J)
=\int_{E\times \mathbb{R}_+}N^{n-1}(y,\F \times (J-s))N(x,dy,ds)
\end{split}
\]
for $n\ge 1$, $N^{1}=N$, and $N^{0}(x,\F \times J)=\mathbf{1}_{\F} (x)\delta_0(J)$ for $\F \in \mathcal{B}(E)$, $J\in \mathcal{B}(\mathbb{R}_{+})$.
Let $\psi \in B(E)$. We define
\begin{equation*}
T_0(t)\psi (x)=\mathbb{E}_x [\psi (X(t))\mathbf{1}_{\{t<\tau_{1}\}}]=\psi (\phi_t(x))e^{-\int_{0}^t \vartheta(\phi_r(x))dr}\mathbf{1}_{[0,\life(x))}(t)
\end{equation*}
and its Laplace transform
\begin{equation}\label{d:Ul0}
U_\lambda^0\psi (x)=\int_0^{\infty} e^{-\lambda t} T_0(t)\psi (x)dt=\int_0^{\life(x)}\psi (\phi_t(x))e^{-\int_{0}^t (\lambda +\vartheta(\phi_r(x)))dr}dt,\quad x\in E,\lambda>0.
\end{equation}
For each $n$, by the strong Markov property at $\tau_n$, we obtain
\[
\begin{split}
\mathbb{E}_x[\psi (X(t))\mathbf{1}_{\{\tau_n\le t<\tau_{n+1}\}}]&=\mathbb{E}_x[\psi (\phi_{t-\tau_n}(X(\tau_n)))\mathbf{1}_{\{\tau_n\le t<\tau_{n+1}\}}] =\int_{E\times [0,t]}T_0(t-s)\psi (y)N^{n}(x,dy,ds).
\end{split}
\]
Consequently, for $\lambda>0$, $x\in E$ and $\psi \in B(E)$ we have
\begin{equation}\label{d:Ulam}
U_\lambda \psi (x)
=\sum_{n=0}^{\infty}\int_{0}^\infty\int_{E}e^{-\lambda s}U_\lambda^0\psi (y)N^{n}(x,dy,ds)=\sum_{n=0}^{\infty} \mathcal{K}_{\lambda}^n U_{\lambda}^0\psi (x),
\end{equation}
where
\[
\mathcal{K}_{\lambda}^n\psi (x)=\int_{E}\int_0^{\infty} e^{-\lambda s}\psi (y)N^{n}(x,dy,ds)=\mathbb{E}_x(e^{-\lambda \tau_n}\psi (X(\tau_n))),\quad n\ge 0.
\]
Note that $\mathcal{K}_{\lambda}^n$ is the $n$th iterate of the operator
\begin{equation}\label{e:Ql}
\mathcal{K}_{\lambda}\psi (x)=\int_E \psi (y)\mathcal{K}_{\lambda}(x,dy), \quad x\in E, \psi \in B(E),
\end{equation}
where the transition kernel $\mathcal{K}_\lambda$ is given by
\begin{equation}\label{e:Q2}
\begin{split}
\mathcal{K}_{\lambda}(x,\F )
=\int_0^{\life(x)} e^{-\lambda s-\int_0^s \vartheta(\phi_r(x))dr }\vartheta(\phi_s(x))\jump(\phi_s(x),\F )ds +e^{-\lambda \life(x)-\int_0^{\life(x)} \vartheta(\phi_r(x))dr }\jump(\phi_{ \life(x)}(x),\F )
\end{split}
\end{equation}
for all $x\in E$ and $\F \in \mathcal{B}(E)$. Note that $\mathcal{K}_1$ corresponds to $R$ in \cite[Equation (2.5)]{costadufour08}.

In what follows we use the following duality notation
\[
\langle (f,f_\partial),  \psi  \rangle=\int_{E} f(x)\psi (x)m(dx)+\int_{\nbound }f_{\partial} (x)\psi (x)m^{-}(dx)
\]
for $f\in L^1(E,m)$, $f_{\partial}\in L^1(\nbound ,m^{-})$, and bounded  measurable functions $\psi \colon E\to \mathbb{R}$.
We let $\mathcal{A}$ and $\mathcal{B}$ be defined as in \eqref{d:AB} where the operators $A$, $B$, $\Psi$ are as described in Theorem~\ref{th:ptsmal} and $\Psi_0=\gamma^{-}$.

\begin{lemma}\label{l:ltr} Let $\mathcal{B}R(\lambda,\mathcal{A})$ be  as in \eqref{eq:BRA} and $\mathcal{K}_\lambda$ as in \eqref{e:Ql}. Then for any nonnegative $(f,f_{\partial})\in L^1(E,m)\times L^1(\nbound ,m^{-})$ and any nonnegative measurable $\psi $ we have \[
\langle \mathcal{B}R(\lambda,\mathcal{A})(f,f_\partial), \psi \rangle = \langle (f,f_\partial), \mathcal{K}_\lambda \psi  \rangle,\quad \lambda>0.
\]
\end{lemma}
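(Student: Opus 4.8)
The plan is to compute both sides explicitly and match them term by term, using linearity to treat the $f$-contribution and the $f_{\partial}$-contribution of $\mathcal{B}R(\lambda,\mathcal{A})(f,f_{\partial})$ separately. From \eqref{eq:BRA} together with $Bg=P_0(\vartheta g,\gamma^{+}g)$ and $\Psi g=P_{\partial}(\vartheta g,\gamma^{+}g)$ in \eqref{d:part}, I would write $\langle \mathcal{B}R(\lambda,\mathcal{A})(f,f_{\partial}),\psi\rangle$ as a sum of pairings of $P_0$- and $P_{\partial}$-images against $\psi$. The first move is to pass from the indicator form of Assumption~\ref{a:jump} in \eqref{e:jump} to its integrated form against a bounded $\psi$: for $g\in L^1(E,m)$ and $g_{\partial^{+}}\in L^1(\pbound,m^{+})$,
\[
\int_{E}\psi\,P_0(g,g_{\partial^{+}})\,dm+\int_{\nbound}\psi\,P_{\partial}(g,g_{\partial^{+}})\,m^{-}(dz)=\int_E \jump\psi\,g\,dm+\int_{\pbound}\jump\psi\,g_{\partial^{+}}\,m^{+}(dz),
\]
where $\jump\psi(x)=\int_E\psi(y)\jump(x,dy)$ and where $m(\nbound)=0$ is used to identify $\int_{E^0}$ with $\int_E$. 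Applying this with $g=\vartheta R(\lambda,A_0)f$, $g_{\partial^{+}}=\gamma^{+}R(\lambda,A_0)f$ for the $f$-part (and the analogous substitution for the $f_{\partial}$-part) reduces everything to integrals of $\jump\psi$ weighted by $\vartheta R(\lambda,A_0)f$ over $E$ and by $\gamma^{+}R(\lambda,A_0)f$ over $\pbound$.

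For the $f$-contribution I would then recover $\mathcal{K}_\lambda$ in two steps. The interior term $\int_E\jump\psi\,\vartheta R(\lambda,A_0)f\,dm$ is handled by the duality $\int_E\widetilde\psi\,R(\lambda,A_0)f\,dm=\int_E f\,U_\lambda^0\widetilde\psi\,dm$, obtained by Laplace-transforming \eqref{eq:T0St} and comparing with \eqref{d:Ul0}, applied to $\widetilde\psi=\vartheta\,\jump\psi$; this yields precisely the first term of $\int_E f\,\mathcal{K}_\lambda\psi\,dm$. The boundary term $\int_{\pbound}\jump\psi\,\gamma^{+}R(\lambda,A_0)f\,m^{+}$ is handled by inserting the explicit trace formula for $\gamma^{+}R(\lambda,A_0)f$ from Lemma~\ref{r:gamA0} and applying the change of variables \eqref{e:eplus}, which converts the integral over $\pbound$ into an integral over $E_{+}$; using the cocycle identity \eqref{d:jacob} to collapse $\int_0^{\life(x)}\vartheta(\phi_{\life(x)-r}(x))\,dr$ to $\int_0^{\life(x)}\vartheta(\phi_r(x))\,dr$, and the fact that $e^{-\lambda\life(x)}=0$ on $E_{+\infty}$, this gives the second term of $\int_E f\,\mathcal{K}_\lambda\psi\,dm$. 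Together these prove $\langle\mathcal{B}R(\lambda,\mathcal{A})(f,0),\psi\rangle=\int_E f\,\mathcal{K}_\lambda\psi\,dm$.

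The $f_{\partial}$-contribution proceeds in parallel, with the roles of $\nbound$ and $\pbound$ interchanged. Here I would substitute the explicit formulas \eqref{e:psilam} and \eqref{e:gppsilam} for $\Psi(\lambda)f_{\partial}$ and $\gamma^{+}\Psi(\lambda)f_{\partial}$. The interior term $\int_E\jump\psi\,\vartheta\,\Psi(\lambda)f_{\partial}\,dm$ is converted to an integral over $\nbound$ by the change of variables \eqref{e:eminus} combined with the identity \eqref{e:psilgm}, producing the first term of $\int_{\nbound}f_{\partial}\,\mathcal{K}_\lambda\psi\,m^{-}$. The boundary term $\int_{\pbound}\jump\psi\,\gamma^{+}\Psi(\lambda)f_{\partial}\,m^{+}$ is reduced to an integral over $\nbound$ by the boundary-to-boundary change of variables \eqref{e:gammam}, after rewriting the integrand in the required form $e^{-\lambda\nlife(z)}h(\phi_{-\nlife(z)}(z))J_{-\nlife(z)}(z)$; this yields the second term of $\int_{\nbound}f_{\partial}\,\mathcal{K}_\lambda\psi\,m^{-}$. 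Summing the two contributions gives the claim by linearity.

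I expect the main obstacle to be the bookkeeping in the boundary terms: rewriting quantities attached to an outgoing point $z\in\pbound$ (such as $\jump\psi(z)$ and $\int_0^{\nlife(z)}\vartheta(\phi_{-r}(z))\,dr$) in terms of the corresponding incoming point $y=\phi_{-\nlife(z)}(z)\in\nbound$, where $\nlife(z)=\life(y)$ and $z=\phi_{\life(y)}(y)$, so that the integrand matches exactly the form demanded by \eqref{e:gammam}. The same care is needed to make the exponentials and cocycle factors line up when applying \eqref{e:eplus} and \eqref{e:eminus}. Once these substitutions are set up correctly, each of the four pieces matches a term of $\mathcal{K}_\lambda$ by inspection.
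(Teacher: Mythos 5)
Your proposal is correct and is essentially the paper's own proof run in the opposite direction: the paper starts from $\int_E f(x)\mathcal{K}_\lambda(x,dy)$ and $\int_{\nbound}f_\partial(x)\mathcal{K}_\lambda(x,dy)$ and transforms them into the operator side via \eqref{eq:T0St}, \eqref{e:eplus}, \eqref{e:eminus}, \eqref{e:gammam}, Lemma~\ref{r:gamA0} and \eqref{e:jump}, whereas you start from $\langle\mathcal{B}R(\lambda,\mathcal{A})(f,f_\partial),\psi\rangle$ and use exactly the same four-way decomposition and the same identities to reach the kernel side. The only quibble is cosmetic: collapsing $\int_0^{\life(x)}\vartheta(\phi_{\life(x)-r}(x))\,dr$ to $\int_0^{\life(x)}\vartheta(\phi_r(x))\,dr$ is a time substitution rather than an application of the cocycle identity \eqref{d:jacob}, which is instead what makes the Jacobian factors match.
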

\begin{proof} Let $\Phi_x$ be as in \eqref{d:Phi}. From \eqref{e:Q2} it follows that
\begin{align}\label{e:adQl}
\int_E f(x)\mathcal{K}_{\lambda}(x,dy)&=\int_0^\infty\int_E  f(x)e^{-\lambda s}\Phi_x(s)\vartheta(\phi_s(x))\jump(\phi_s(x),dy)\,m(dx)\,ds\\
\nonumber &\quad+\int_{E_{+}} f(x)e^{-\lambda \life(x)}\Phi_x(\life(x)^{-})\jump(\phi_{\life(x)}(x),dy)m(dx).
\end{align}
We begin by rewriting the first integral in the right-hand side of \eqref{e:adQl}.
For each $s>0$, using \eqref{eq:T0St}, we get
\[
\begin{split}
\int_E  f(x)\Phi_x(s)\vartheta(\phi_s(x))\jump(\phi_s(x),dy)m(dx)=\int_E  S(s)f(x)\vartheta(x)\jump(x,dy) m(dx).
\end{split}
\]
Hence,
\[
\int_0^\infty\int_E  e^{-\lambda s}f(x)\Phi_x(s)\vartheta(\phi_s(x))\jump(\phi_s(x),dy)\,m(dx)\,ds=\int_E  R(\lambda,A_0)f(x)\vartheta(x)\jump(x,dy) m(dx).
\]
To rewrite the second integral in \eqref{e:adQl}, we make use of \eqref{e:eplus}  to get
\[\begin{split}
\int_E  f(x)e^{-\lambda \life(x)}\Phi_x(\life(x)^{-})\jump(\phi_{\life(x)}(x),dy)m(dx)=\int_{\pbound }f_{\partial^+}(z)\jump(z,dy)m^{+}(dz) ,
\end{split}
\]
where
\[
f_{\partial^+}(z)=\int_{0}^{\nlife(z)}e^{-\lambda s}e^{-\int_0^s\vartheta(\phi_r(\phi_{-s}(z)))dr}f(\phi_{-s}(z))J_{-s}(z) ds.
\]
This together with \eqref{e:jump} leads to
\begin{multline*}
\int_E  R(\lambda,A_0)f(x)\vartheta(x)\int_E \psi (y)\jump(x,dy) m(dx)+\int_{\pbound }f_{\partial^+}(z)\int_E \psi (y)\jump(z,dy)m^{+}(dz)\\
=\big\langle \big(P_0(\vartheta R(\lambda,A_0)f,f_{\partial^+})),P_{\partial}(\vartheta R(\lambda,A_0)f,f_{\partial^+}))\big),\psi \big\rangle.
\end{multline*}
Since $f_{\partial^+}=\gamma^{+}R(\lambda,A_0)f$, by Lemma~\ref{r:gamA0}, we obtain
\begin{equation}\label{e:fgQ}
\int_E f(x)\int_E \psi (y)\mathcal{K}_{\lambda}(x,dy)m(dx)=\langle (B(R(\lambda,A_0)f),\Psi(R(\lambda,A_0)f),\psi \rangle. \end{equation}

Similarly, we have
\[
\begin{split}
\int_{\nbound } f_\partial(x)\mathcal{K}_{\lambda}(x,dy)m^{-}(dx)&=\int_{\nbound }\int_0^{\life(x)}f_{\partial}(x)e^{-\lambda s}e^{-\int_{0}^s\vartheta(\phi_r(x))dr}\vartheta(\phi_s(x))\jump(\phi_s(x),dy)\,ds\,m^{-}(dx)\\
&\quad+\int_{\nbound } f_\partial(x) e^{-\lambda \life(x)}\Phi_x(\life(x)^{-})\jump(\phi_{\life(x)}(x),dy)m^{-}(dx)
\\
&=\int_{E}\Psi(\lambda)f_{\partial}(x)\vartheta(x)\jump(x,dy)m(dx) + \int_{\pbound }\gamma^{+}\Psi(\lambda)f_{\partial}(x)\jump(x,dy)m^{+}(dx),
\end{split}
\]
where we used \eqref{e:eminus} and \eqref{e:gammam}.
Finally, we conclude from \eqref{e:jump}  that
\[
\int_{\nbound } f_\partial(x)\int_E \psi (y)\mathcal{K}_{\lambda}(x,dy)m(dx)=\big\langle (B\Psi(\lambda)f_{\partial},\Psi\Psi(\lambda)f_{\partial}),\psi \big\rangle.
\]
This together with \eqref{e:fgQ} completes the proof.
\end{proof}

Now we are prepared to give the

\begin{proof}[Proof of Theorem~\ref{t:mPg}]
Assume that $f,f_{\partial}, \psi $ are measurable and nonnegative.
Observe that  we have
\begin{equation}\label{e:U0lambda}
\langle R(\lambda,\mathcal{A})(f,f_{\partial}),\psi \rangle=\langle (f,f_{\partial}), U_{\lambda}^0\psi \rangle
\end{equation}
where $U_{\lambda}^0$ is as in \eqref{d:Ul0}. It follows from Lemma~\ref{l:ltr} that 
\[
\langle R(\lambda,\mathcal{A})\mathcal{B}R(\lambda,\mathcal{A})(f,f_{\partial}),\psi \rangle=\langle (f,f_{\partial}), \mathcal{K}_\lambda U_{\lambda}^0\psi \rangle.
\]
Consequently, for any $n\ge 1$ we obtain
\[
\langle R(\lambda,\mathcal{A})(\mathcal{B}R(\lambda,\mathcal{A}))^n(f,f_{\partial}),\psi \rangle=\langle (f,f_{\partial}), \mathcal{K}_\lambda^n U_{\lambda}^0\psi \rangle.
\]
By the Lebesgue monotone convergence theorem,
\[
\lim_{N\to \infty} \langle \sum_{n=0}^N R(\lambda,\mathcal{A})(\mathcal{B}R(\lambda,\mathcal{A}))^n(f,f_{\partial}),\psi \rangle=\langle \mathcal{R}_\lambda(f,f_{\partial}),\psi \rangle
\]
and
\[
\lim_{N\to \infty} \langle (f,f_{\partial}), \sum_{n=0}^N \mathcal{K}_\lambda^n U_{\lambda}^0\psi \rangle =\langle (f,f_{\partial}),U_\lambda \psi \rangle, \]
where $\mathcal{R}_\lambda$ is as in \eqref{e:res} and $U_\lambda \psi $ is as in \eqref{d:Ulam}.
This shows that
\begin{equation*}
\int_E  R(\lambda,G)f(x)\psi (x) m(dx)=\int_{E} f(x) U_\lambda \psi (x) m(dx) ,
\end{equation*}
since $\mathcal{R}_\lambda(f,0)=( R(\lambda,G)f,0)$ for $f\in L^1(E,m)$.
The process $\{X(t)\}_{t\ge 0}$ has right-continuous sample paths by construction.
Let $\psi \in \Lip(E)$, where $\Lip(E)$ is the set of bounded globally Lipschitz functions $\psi \colon E\to \mathbb{R}$.
Thus, we get
\[
\lim_{s\to t^{+}}\mathbb{E}_x(\psi (X(s)))=\mathbb{E}_x(\psi (X(t))),\quad x\in E,t\ge 0, \psi \in \Lip(E),
\]
and we conclude that the function
\[
t\mapsto \int_{E} f(x) \int_E \psi (y)P(t,x,dy) m(dx)
\]
is right-continuous for any $\psi \in \Lip(E)$ and any nonnegative $f\in L^1$. We also have
\[
\int_E  R(\lambda,G)f(x)\psi (x) m(dx)=\int_0^\infty e^{-\lambda t}\int_E P(t)f(x)\psi (x) m(dx)dt
\]
and the function
\[
t\mapsto \int_E P(t)f(x)\psi (x) m(dx)
\]
is continuous. Hence, by the uniqueness of the Laplace tranform, we obtain
\begin{equation}\label{e:eqLT}
\int_{E} P(t)f(x)\psi (x)m(dx)=\int_{E}\int_E \psi (y)P(t,x,dy)f(x)m(dx)
\end{equation}
for all $t>0$, nonnegative $f\in L^1$ and $\psi \in \Lip(E)$. Finally, we can approximate indicator functions of closed sets by functions from $\Lip(E)$.
Thus equality \eqref{e:eqLT} holds for all $\psi $ being indicator functions of closed subsets of $E$. Since two finite Borel measures are uniquely defined through their values on closed sets, we conclude that \eqref{e:eqLT} holds for $\psi =\mathbf{1}_{\F }$, $\F \in \mathcal{B}(E)$. This completes the proof of Theorem~\ref{t:mPg}.
\end{proof}

Finally, we prove  our results from Section~\ref{s:gen}.

\begin{proof}[Proof of Theorem~\ref{th:ptsmal}]
Theorem \ref{th:ptsmall} together with Theorem~\ref{t:mPg}  implies that the generator $(G,\mathcal{D}(G))$ of the induced semigroup $\{P(t)\}_{t\ge0}$ is an extension of the operator $(A_{\Psi}+B,\mathcal{D}(A_\Psi))$. Now, if $\mathcal{D}(G)=\mathcal{D}(A_\Psi)$ then $G=A_{\Psi}+B$ is the generator of a substochastic semigroup satisfying
\[
\int_E Gf dm=0\quad \text{for }f\in \mathcal{D}(G),f\ge 0
\]
by Theorem \ref{th:ptsmall} and \eqref{eq:zeroo}. Hence, the induced semigroup is stochastic.
\end{proof}

\begin{proof}[Proof of Corollary~\ref{c:sto}]
Let $\overline{\vartheta}$ be the upper bound for $\vartheta$ and let $c$ be the lower bound for $\life$ on $\nbound $.  Observe that for nonnegative $(f,f_{\partial})\in L^1(E,m)\times L^1(\nbound ,m^{-})$ and $\lambda>0$ we have
\[
\|\mathcal{B}R(\lambda,\mathcal{A})(f,f_{\partial})\|\le \left(\frac{\overline{\vartheta}}{\lambda}+e^{-\lambda c}\right)\|(f,f_\partial)\|.
\]
This shows that \eqref{eq:ssper} holds and Corollaries \ref{c:stochastic} and~\ref{c:gen} imply that $\{P(t)\}_{t\ge0}$ is stochastic and its generator is $(A_{\Psi}+B,\mathcal{D}(A_\Psi))$.
\end{proof}

\subsection{Proofs of Theorems \ref{t:Kkernel}--\ref{c:invconv}}\label{s:inv}

\begin{proof}[Proof of Theorem~\ref{t:Kkernel}]
First, we look more closely at the defining formula of the operator $K $ in \eqref{eq:K} when the operators $B$ and $\Psi$ are as given in \eqref{d:part}.
Suppose that $(f,f_{\partial})\in L^1(E,m)\times L^1(\nbound ,m^{-})$ are nonnegative.
Using monotonicity of $\lambda\mapsto R(\lambda,A_0)f$  and $\lambda\mapsto \Psi(\lambda)f_{\partial}$ we infer that
the pointwise limits
\begin{equation*}
R(0)f=\lim_{\lambda \to 0^+}R(\lambda,A_0)f \quad \text{and}\quad \Psi(0)f_{\partial}=\lim_{\lambda \to 0^+}\Psi(\lambda)f_{\partial}
\end{equation*}
exist and that $R(0)f,\Psi(0)f_{\partial}$ are nonnegative, but need not be integrable.
Since $\|\vartheta R(\lambda,A_0)f \|\le \|f\|$ and $\|\vartheta\Psi(\lambda)f_{\partial}\|\le \|f_{\partial}\|$ for each $\lambda>0$, by Lemmas~\ref{r:psil} and \ref{r:gamA0},  we see that
\begin{equation*}
R(0)f(x)=\int_{0}^{\nlife(x)}e^{-\int_0^t \vartheta(\phi_{-r }(x))dr} f(\phi_{-t}(x))J_{-t}(x)dt, \quad x\in E,
\end{equation*}
and
\begin{equation*}
\Psi(0)f_{\partial}(x)=\mathbf{1}_{\{\nlife(x)<\infty\}}e^{-\int_0^{\nlife(x)} \vartheta(\phi_{-r }(x))dr} f_\partial(\phi_{-\nlife(x)}(x))J_{-\nlife(x)}(x),\quad x\in E,
\end{equation*}
together with $\vartheta R(0)f, \vartheta \Psi(0)f_{\partial}\in L^1(E,m)$.
Similarly, $\|\gamma^{+} R(\lambda,A_0)f \|\le \|f\|$ and   $\|\gamma^{+}\Psi(\lambda)f_{\partial}\|\le \|f_{\partial}\|$ for all $\lambda>0$,  and we have $\gamma^+R(0)f, \gamma^{+}\Psi(0)f_{\partial}\in L^1(\pbound ,m^{+})$, where
\begin{equation*}
\gamma^{+}R(0)f(z)=\int_{0}^{\nlife(z)}e^{-\int_0^{t}\vartheta (\phi_{-r}(z))dr}f(\phi_{-t}(z))J_{-t}(z)dt,\quad z\in \pbound ,
\end{equation*}
and
\begin{equation*}
\gamma^{+}\Psi(0)f_{\partial}(z)=\mathbf{1}_{\{\nlife(z)<\infty\}}e^{-\int_0^{\nlife(z)} \vartheta(\phi_{-r}(z))dr} f_\partial(\phi_{-\nlife(z)}(z))J_{-\nlife(z)}(z),\quad z\in \pbound .
\end{equation*}
Consequently, for $R_0$ as in \eqref{d:R0p} we obtain
\begin{equation*}
R_0(f,f_{\partial})(x)=R(0)f(x)+\Psi(0)f_{\partial}(x), \quad x\in E,
\end{equation*}
and
\begin{equation*}
R_0(f,f_{\partial})(z)=\gamma^{+}R_0(f,f_{\partial})(z)= \gamma^+R(0)f(z)+\gamma^{+}\Psi(0)f_{\partial}(z), \quad z\in \pbound.
\end{equation*}
Thus, the operator $K$ as defined in \eqref{eq:K} is given by \eqref{d:operatorK}. Note that if  condition~\eqref{d:inflife} holds for all $x$ with $\life(x)=+\infty$ then, by \eqref{eq:T0St} and the dominated convergence theorem, the semigroup $\{S(t)\}_{t\ge 0}$ satisfies
\[
\lim_{t\to\infty}\|S(t)f\|=\lim_{t\to\infty}\int_{E} e^{-\int_{0}^{t}\vartheta(\phi_r(x))dr}\mathbf{1}_{[0,\life(x))}(t) |f(x)|\,m(dx)=0 , \quad f\in L^1(E,m),
\]
and it is thus  strongly stable.
Now Theorem~\ref{t:opK}, Lemma~\ref{l:ltr} and the monotone convergence theorem imply the result.
\end{proof}

\begin{proof}[Proof of Theorem~\ref{c:invsem}]
Let $(f,f_{\partial})$ be an invariant density for the operator $K$.
For $\overline{f}$ as in \eqref{eq:esd} and $B\in \mathcal{B}(E)$ we have
\[
\int_B \overline{f} dm=\int_E \lim_{\lambda\to 0^{+}}(R(\lambda,A_0)f+\Psi(\lambda)f_{\partial}) \mathbf{1}_{B}dm =\lim_{\lambda\to 0}\langle R(\lambda,\mathcal{A})(f,f_{\partial}), \mathbf{1}_{B}\rangle
\]
by \eqref{eq:RA} and the monotone convergence theorem. It follows from \eqref{e:U0lambda} and \eqref{d:Ul0} that
\[
\int_B \overline{f} dm=\int_E \int_0^{\life(x)}\mathbf{1}_{B}(\phi_t(x))e^{-\int_{0}^t \vartheta(\phi_r(x))dr}dt f(x)m(dx)+ \int_{\nbound}\int_0^{\life(x)}\mathbf{1}_{B}(\phi_t(x))e^{-\int_{0}^t \vartheta(\phi_r(x))dr}dtf_{\partial}(x)m^{-}(dx).
\]
Since  $\overline{f}=R_0(f,f_{\partial})$, we see that assumption  \eqref{a:intinv} gives $\overline{f}\in L^1(E,m)$. Consequently, the result follows from Theorem~\ref{thm:exunifp}.  Observe that condition \eqref{d:norma} holds as well.
\end{proof}

\begin{proof}[Proof of Theorem \ref{c:invconv}] We show that Theorem~\ref{thm:exunifpi} applies.
Let $f_0=\lambda f_*$ and
\[
f_N=R(\lambda,A_0)Bf_{N-1}+\Psi(\lambda)\Psi f_{N-1}, \quad N\ge 1.
\]
Then $f_N\in \mathcal{D}$ for $N\ge 1$ and $f_N\uparrow f_*$. By Lemmas ~\ref{r:psil} and \ref{r:gamA0}, we see that
\[
\begin{split}
\int_{\pbound }\gamma^{+} f_N dm^{+}+\int_E (\lambda+\vartheta) f_Ndm&=\int_{\nbound }\Psi f_{N-1} dm^{-}+\int_{E} B f_{N-1}dm.
\end{split}
\]
This together with Assumption~\ref{a:jump} implies that
\[
\int_{\pbound }\gamma^{+} f_Ndm^{+}+\int_E \vartheta  f_Ndm\le \int_{\pbound }\gamma^{+}f_{N-1}dm^{+}+\int_{E}\vartheta f_{N-1}dm
\]
for all $N$. Since $f_1\in \mathcal{D}$, we see that $Bf_*\in L^1(E,m)$, $\Psi f_*\in L^1(\nbound ,m^{-})$ and
\[
\|(B f_*,\Psi f_*)\|=\int_{\pbound }\gamma^{+} f_*dm^{+}+\int_E \vartheta f_*dm\le \int_{\pbound }\gamma^{+} f_1dm^{+}+\int_E \vartheta  f_1dm<\infty.
\]
Theorem~\ref{thm:exunifpi}  completes the proof.
\end{proof}

\section{Examples}\label{s:examples}

\subsection{Several flows}\label{s:ss} In this section we look at  the general setting considered by Davis \cite{davis84,davis93}.
Let $\gstate_i\subset \mathbb{R}^{d_i}$, $i\in I$, be a collection of open sets, where $I$ is a finite or a countable set,  such that on each set $\gstate_i$ there is a flow  $\phi_t^i\colon \gstate_i\to \gstate_i$, $t\in \mathbb{R}$, $i\in I$,
 defined  by solutions of the differential equation
\begin{equation}\label{d:flowbi}
\frac{d}{dt}x(t)=b_i(x(t)), \quad x(0)=x^0,
\end{equation} where $b_i$ is locally Lipschitz continuous. For each $i$ let $E_i^0\in \mathcal{B}(\mathbb{R}^{d_i})$ be  such that its closure $\overline{E}_i^{0}$ is contained in  $\gstate_i$.
We define
two subsets of the boundary of the set $E_i^0$: the outgoing boundary
\[
\pbound_i=\{z^0\in \overline{E}_i^0\setminus E_i^0: z^0=\phi_t^i(x^0)\text{ for some } x^0\in E_i^0, t>0, \text{ and } \phi_s^i(x^0)\in E_i^0 \text{ for } s\in [0,t)\}
\]
which are points which can be reached by the flow $\phi_t^i$  from $E_i^0$ in a finite positive time and the incoming boundary
\[
\nbound_i=\{z^0\in \overline{E}_i^0\setminus E_i^0: z^0=\phi_{-t}^i(x^0)\text{ for some } x^0\in E_i^0, t>0, \text{ and } \phi_{-s}^i(x^0)\in E_i^0 \text{ for } s\in [0,t)\} .
\]
We define $E_i=E_i^0\cup \nbound_i\setminus (\nbound_i\cap \pbound_i)$, $i\in I$,  $E^0=\{(x^0,i):x^0\in E_{i}^0,i\in I\}$,
and
the state space of the process by
\[
E=\{(x^0,i):x^0\in E_i, i\in I\} .
\]
The points from the sets
\[
\bound =\bigcup_{i\in I} \bound_i\times\{i\}
\]
can be reached by the flow from $E^0$ in a finite positive/negative time. For each $i$ we also consider a Borel measurable nonnegative function $\vartheta_i\colon E_i\to [0,\infty)$.

Let $\gstate=\bigcup_{i\in I} \gstate_i\times \{i\}$ and let $\widetilde{\mathcal{E}}$ be the $\sigma$-algebra which is the union of Borel $\sigma$-algebras  of subsets of $\gstate_i$. The space $\gstate$ can be endowed with a metric  in such a way that $\gstate$ is a separable metric space. We define
$\phi_t \colon \gstate\to \gstate$ by
\[
\phi_t(x)=(\phi_t^i(x^0),i), \quad x=(x^0,i),  x^0\in \gstate_i,i\in I.
\]
The mapping
$\mathbb{R}\times \gstate \ni(t,x)\mapsto\phi_t(x)\in \gstate$ is continuous and \eqref{e:ds} holds.  Thus $\phi$ is a flow on $\gstate$.
We  consider the
$\sigma$-finite measure $m$ on $\gstate$ given by
\[
m(\F )=\sum_{i\in I} (m_i\times \delta_i)(\F ),\quad \F \in \widetilde{\mathcal{E}},
\]
where $m_i$ is the Lebesgue measure on $\mathbb{R}^{d_i}$, $i\in I$, and the jump rate function given by
$\vartheta(x)=\vartheta_i(x^0)$ for $x=(x^0,i)$ with  $ x^0\in E_i$, $i\in I$.
We assume that the interior  of each set $E_i^{0}$ is non-empty and that the boundary of the set $E_i^0$ is of Lebesgue measure $m_i$ zero.

\begin{corollary}
Suppose that for each $i\in I$ the vector field $b_i\colon \gstate_i\to \mathbb{R}^{d_i}$ in \eqref{d:flowbi} is continuously differentiable with bounded derivative and that $\vartheta_i$ is continuous. Then Assumptions \ref{a:nons}--\ref{a:dive} hold true. If, additionally, a jump distribution $\jump$ is such that Assumption \ref{a:jump} holds then the process with characteristics $(\phi,\vartheta,\jump)$ induces a substochastic semigroup on $L^1(E,m)$.
\end{corollary}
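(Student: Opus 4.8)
The plan is to verify that the hypotheses imply Assumptions~\ref{a:nons}--\ref{a:dive}; since Assumption~\ref{a:jump} is granted, Theorem~\ref{t:mPg} then applies directly and delivers the induced substochastic semigroup on $L^1(E,m)$. The decisive structural remark is that $\phi$ preserves each sheet $\gstate_i\times\{i\}$, so that every object decomposes as a direct sum over $i\in I$ and it suffices to analyse one flow $\phi^i$ on $\gstate_i\subset\mathbb{R}^{d_i}$, placing us exactly in the situation of Remark~\ref{r:es}.

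For Assumption~\ref{a:nons} I would set $J_t(x)=|\det D_{x^0}\phi_t^i(x^0)|$ for $x=(x^0,i)$. Because $b_i$ is $C^1$, each $\phi_t^i$ is a $C^1$-diffeomorphism; differentiating the group law \eqref{e:ds} and using the chain rule yields the cocycle identity \eqref{d:jacob} for every $x$ and all $s,t\in\mathbb{R}$, and the change-of-variables formula on $\gstate_i$ gives \eqref{d:RND} for $m=\sum_{i}m_i\times\delta_i$. Since each $\partial E_i^0$ is $m_i$-null, $m(\partial E)=0$. For Assumption~\ref{a:dive} I would observe that the bounded derivative of $b_i$ forces $a_i:=\dive b_i$ to be bounded and continuous, so Liouville's theorem represents $J_t$ in the form \eqref{d:Liou} with a bounded density; the remark following Assumption~\ref{a:dive}, through \cite[Proposition 3.11]{arlottibanasiaklods07}, then produces finite boundary measures $m^{\pm}_i$ on $\bound_i$ satisfying \eqref{e:mpm}, and the sheetwise assembly $m^{\pm}=\sum_i m^{\pm}_i$ yields Assumption~\ref{a:dive}.

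It remains to check Assumption~\ref{a:varp}. Fixing $x=(x^0,i)\in E$, the integrand $r\mapsto\vartheta(\phi_r(x))$ equals $\vartheta_i(\phi_r^i(x^0))$ while $\phi_r^i(x^0)\in E_i$ and vanishes otherwise; continuity of $\vartheta_i$ makes it bounded on each compact trajectory segment, so $t\mapsto\int_0^t\vartheta(\phi_r(x))dr$ is locally Lipschitz and hence absolutely continuous. With Assumptions~\ref{a:nons}--\ref{a:dive} in force and Assumption~\ref{a:jump} assumed, Theorem~\ref{t:mPg} finishes the argument.

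The substance of the proof sits in Assumption~\ref{a:dive}: the representation \eqref{e:mpm} encodes that the flow meets $\bound_i$ transversally and that the resulting boundary measures are finite, which is precisely the content of the cited Arlotti--Banasiak--Lods result and the only place where the geometry of $\partial E_i^0$ enters. The remaining points --- that \eqref{d:jacob} and \eqref{d:RND} hold for \emph{every} $x$ and all $t$ rather than merely $m$-a.e., and that the direct sums defining $m$ and $m^{\pm}$ are, respectively, $\sigma$-finite and finite across $i\in I$ --- are routine bookkeeping once the single-sheet statements are established.
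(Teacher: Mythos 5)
Your proposal is correct and follows essentially the same route as the paper's proof: the sheet-wise reduction, taking $J_t$ to be the Jacobian given by Liouville's formula $J_t^i(x^0)=\exp\{\int_0^t \dive(b_i(\phi_r^i(x^0)))\,dr\}$ with bounded divergence, invoking \cite[Proposition 3.11]{arlottibanasiaklods07} for the boundary measures $m_i^{\pm}$ and assembling $m^{\pm}=\sum_{i\in I}m_i^{\pm}\times\delta_i$, and concluding via Theorem~\ref{t:mPg}. The only difference is cosmetic: you spell out the cocycle verification and the absolute-continuity argument for Assumption~\ref{a:varp}, which the paper asserts directly from continuity of $\vartheta_i$.
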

\begin{proof}
From the theory of differential equations it follows that for each $i$ there is a flow on the set $\gstate_i$  defined by solutions of the initial value problem \eqref{d:flowbi}. If $m_i$ is the Lebesgue measure on $\mathbb{R}^{d_i}$ then the Jacobian $J_t^i$ of the flow $\phi^i$ is given by
\[
J_t^i(x^0)=\exp\left\{\int_0^{t} \dive(b_i(\phi_{r}^i (x^0)))dr\right\},\quad x^0\in \gstate_i,
\]
where $\dive(b_i(x^0))$ is the divergence of the vector field $b_i$. We define $
J_t(x)=J_t^i(x^0)$ for $x=(x^0,i)$ with $x^0\in \gstate_i$, $i\in I$, and we note that Assumption \ref{a:nons} holds.
Given $i$  the function  $x^0\mapsto \dive(b_i(x^0))$ is bounded and there exist  unique Borel measures $m_i^{\pm}$ such that condition \eqref{e:mpm} holds for the flow $\phi^i$ on $E_i$ with the corresponding boundaries $\bound_i$, by \cite{arlottibanasiaklods07}.
Therefore, Assumption \ref{a:dive} is satisfied if we consider the measures $m^{\pm}=\sum_{i\in I} m_i^{\pm}\times \delta_i$. Since for each $i$ the function $\vartheta_i$ is continuous we see that Assumption \ref{a:varp} also holds.
\end{proof}

\subsection{Kinetic equations with conservative boundary conditions}\label{s:kinetic}

In this section we provide the link between PDMPs and transport equations with boundary conditions; for the general treatment of the latter see \cite{voigt80,beals87,greenberg87,cercignani88,villani02,lods05} and the references therein.
We consider here a general time dependent linear kinetic problem  for a density $u$ depending on time $t$,  position $x\in \Omega$ and velocity $v\in V$, where  $\Omega\times V\subseteq \mathbb{R}^{2d}$. The movement is defined by the flow
given by the differential equation
\begin{equation}\label{e:vjp}
x'(t)=v(t),\quad v'(t)=0.
\end{equation}
 The solution of \eqref{e:vjp} with initial condition $(x(0),v(0))=(x,v)$ is of the form
\[
\phi_t(x,v)=(x+t v,v),\quad x\in \mathbb{R}^d, v\in V, t\in \mathbb{R}.
\]
We take $\gstate=\mathbb{R}^{2d}$,  $E^0=\Omega\times V$, and $m=\Leb\times \nu$, where $\nu$ is a Radon measure on $\mathbb{R}^d$ with support $V$.
We have
\[
 \bound =\{(x,v)\in \partial \Omega\times V: \pm v\cdot n(x) >0\},\quad m^{\pm}(dx,dv)=\pm v\cdot n(x)\sigma (dx)\nu(dv) ,
\]
where $n(x)$ is the outward normal at $x\in \partial \Omega$, and
$\sigma$ is the surface Lebesgue measure on the boundary $\partial \Omega$.
Supplementary conditions must be specified on the boundary of the phase space. We assume that they are modeled by a positive boundary operator $H$ relating the incoming and outgoing boundary fluxes of particles.
There is also given a collision frequency $\vartheta(x,v)$ and a collision kernel $k(x,v,v')$, which are  nonnegative measurable functions such that
\[
\int_V k(x,v',v)\nu(dv')=\vartheta(x,v),\quad (x,v)\in \Omega\times V.
\]
 Thus the equation for $u$ is of the form
\begin{equation*}
\frac{\partial u(t,x,v)}{\partial t}+v\cdot \nabla_{x} u(t,x,v)=\int_V k(x,v,v')u(t,x,v')\nu(dv')-\vartheta(x,v)u(t,x,v)
\end{equation*}
with boundary and initial conditions
\[
\gamma^{-}u=H(\gamma^{+} u), \quad u(0,x,v)=f(x,v).
\]
The boundary operator $H$ is assumed to have norm equal to 1.
Let the jump distribution $\jump$ be such that
\[
\vartheta(x,v)\jump((x,v), \F )=\int_V \mathbf{1}_{\F}(x,v')k(x,v',v)\nu(dv'),\quad (x,v)\in E^0,\F \in \mathcal{B}(E),
\]
and
\[
\int_{\pbound }\jump(z,\F )f_{\partial^+}(z)m^{+}(dz)=\int_{\nbound \cap \F }H(f_{\partial^+})(z)m^{-}(dz),\quad f_{\partial^+}\in L^1(\pbound ,m^{+}), \F \in \mathcal{B}(\nbound ).
\]
Thus, we have $P_0(f,f_{\partial^+})=P_0(f)$ and $P_{\partial}(f,f_{\partial^+})=H(f_{\partial^+})$ for $f\in L^1(E,m)$, $f_{\partial^+}\in L^1(\pbound,m^{+})$, where
\[
P_0(\vartheta f)(x,v)=\int_{V}k(x,v,v')f(x,v')\nu(dv')
\]
for $f\in L^1(E,m)$ such that $\vartheta f\in L^1(E,m)$.
If for each $v\in V$ the function $x\mapsto \vartheta(x,v)$ is locally integrable on $\Omega$, then the process with characteristics $(\phi,\vartheta,\jump)$ induces a substochastic semigroup on $L^1(E,m)$. Moreover, if $\vartheta$ is bounded and $\inf\{\life(x,v):(x,v)\in \nbound \}>0$ then the semigroup corresponding to  $(\phi,\vartheta,\jump)$ is stochastic and the kinetic equation  is well posed on $L^1(E,m)$, by Corollary \ref{c:sto}.

A particular example is the collisionless transport equation, where $k\equiv 0$ or, equivalently, $q\equiv 0$,  see \cite{voigt80,arlottilods05,mustapha17,lods2018invariant} and the references therein. Consider now the operator $K$ as in \eqref{d:operatorK}. We have $K(f,f_{\partial})=(0, H(R_0(f,f_{\partial})))$  for $(f,f_{\partial})\in L^1(E,m)\times L^1(\nbound , m^{-})$. Observe  that the operator $K $ has an invariant density $(f,f_{\partial})$ if and only if $f=0$ and $f_{\partial}$ is the solution of $H(R_0(0,f_{\partial}))=f_{\partial}$; in that case, the induced substochastic semigroup has an invariant density if $R_0(0,f_{\partial})\in L^1(E,m)$, by Theorem~\ref{c:invsem}.

\subsection{Application to a two phase cell cycle model}\label{s:cell}

In this section we give an example of a PDMP where   the induced semigroup is stochastic as in Corollary~\ref{c:sto} and its generator is the operator $A_{\Psi}+B$ but the jump rate function $\vartheta$ need not be bounded and $\inf\{\life(z):z\in \nbound \}=0$. Consider a continuous time version of the two-phase cell cycle model from \cite{TH-cc,Tyrcha,al-mcm-jt-92} as presented in \cite{rudnickityran15}.
We assume that the cell cycle consists of two phases: $I$ and $II$. The phase $I$ begins at birth and lasts until a critical event occurs which is necessary for mitosis and cell division. Then the cell enters the phase $II$ which lasts for a finite time $T_{II}$. We assume that a cell of size $x>0$ grows with rate $g(x)$, it enters the phase $II$ with rate $\varphi(x)$,
and at the end of the phase $II$ it splits into two daughter cells  with sizes $x/2$.

The model can be described as a piecewise deterministic Markov process. We consider three variables $(x,y,i)$, where $x$ describes the cell size, $y$ describes the time which elapsed since the moment the cell entered the phase $II$,   $i=1$ if a cell is in the phase $I$,  and $i=2$ if it is in the phase $II$.
Between jump points  the coordinates of the process
$X(t)=(x(t),y(t),i(t))$ satisfy the  following system of ordinary differential equations
\begin{equation}\label{e:ccde}
x'(t)=g(x(t)),\quad y'(t)=i(t)-1,\quad i'(t)=0.
\end{equation}
The generation time of a cell, i.e. the time from birth to division, is equal to $T_{I}+T_{II}$, where $T_{I}$ is the random length of the phase $I$ with distribution
\begin{equation}
 \mathbb{P}(T_{I}>t|x(0)=x)=e^{-\int_0^t \varphi(x(r))dr},\quad t\ge 0.
\end{equation} Let $t_0=0$. If consecutive descendants of a given cell are observed and the $n$th generation time is denoted by $t_n$, then $t_{n+1}=s_{n}+T_{II}$ where   $s_n$ is the time when the cell from the $n$th generation  enters the phase $II$,  $n\ge 0$. A newborn cell at time $t_n$ has an initial size equal to $x(t_n^-)/2$, where $x(t_n^-)$ is the size of its mother cell.  Thus
\begin{equation*}
x(s_n)=x(s_n^-), \quad i(s_n)=2,
\end{equation*}
and the cell divides into two cells at the end of the phase $II$, so that we have
\begin{equation*}\label{d:jumptn}
x(t_{n+1})=\frac12 x(t_{n+1}^-), \quad i(t_{n+1})=1.
\end{equation*}

We assume that $g\colon (0,\infty)\to (0,\infty)$ is a  continuous
 function  such that $g(x)>0$ for $x>0$ and
\[
\mathfrak{G}(\infty)=|\mathfrak{G}(0)|=\infty, \quad \text{where}\quad \mathfrak{G}(x)=\int_{\bar{x}}^x \frac{1}{g(y)}dy
\]
with  $\bar{x}>0$.
Observe that $\phi_t^1(x_0)=\mathfrak{G}^{-1}(\mathfrak{G}(x_0)+t)$ is the solution of $x'(t)=g(x(t))$ with $x(0)=x_0$.
The solution of \eqref{e:ccde} with initial condition $(x,y,i)$
 is given by
\[
\phi_t(x,y,i)=(\phi_t^1(x),y+(i-1)t,i),\quad x\in (0,\infty), y,t\in \mathbb{R}, i\in \{1,2\}.
\]
We take $\gstate=(0,\infty)\times \{0\}\times\{1\}\cup (0,\infty)\times \mathbb{R}\times \{2\}$ and
$
E^0=(0,\infty)\times\{0\}\times \{1\}\cup (0,\infty)\times(0,T_{II})\times \{2\}.
$
We have
\[
\nbound =(0,\infty)\times \{0\}\times \{2\}\quad \text{and}\quad \pbound =(0,\infty)\times \{T_{II}\}\times\{2\}.
\]
We introduce the measure
\[
m(\F )=(\Leb\times\delta_{0}\times \delta_{1})(\F )+(\Leb\times \Leb\times \delta_2)(\F ), \quad \F \in \mathcal{B}(\gstate),
\]
where $\Leb$ is the one dimensional Lebesgue measure.
Observe that
\[
J_t(x,y,i)=\frac{g(\phi_t^1(x))}{g(x)},\quad (x,y,i)\in \gstate, t\in \mathbb{R}.
\]
The measures at boundaries are taken to be
\[
m^{-}=\Leb\times\delta_{0}\times \delta_{2}\quad \text{and}\quad m^{+}=\Leb\times\delta_{T_{II}}\times \delta_{2}.
\]
The jump rate function $q$ is given by $\vartheta(x,0,1)=\varphi(x)$ and $\vartheta(x,y,2)=0$, $(x,y,i)\in E$.
We assume that the function $\varphi\colon (0,\infty)\to[0,\infty)$ is locally integrable on $(0,\infty)$. Finally, two types of jumps are possible: if $i=1$ then there is a jump from $(x,0,1)$ to $(x,0,2)$ with rate $\varphi(x)$, while if $i=2$ then the boundary $\pbound $ is reached in a finite time
and there is a forced jump from the point $(x,T_{II},2)$ to the point $(\frac12x,0,1)$. Observe that we have
\[
P_{0}(f,f_{\partial^+})(x,0,1)=2f_{\partial^+}(2x,T_{II},2), \quad P_{0}(f,f_{\partial^+})(x,y,2)=0,\quad  x>0, y\in (0,T_{II}),
\]
and
\[
P_{\partial}(f,f_{\partial^+})(x,0,2)=f(x,0,1),\quad x>0, f\in L^1(E,m), f_{\partial^+}\in L^1(\pbound ,m^{+}).
\]

The  operator $A$ can be interpreted as
\[
Af(x,0,1)=-\frac{\partial }{\partial
x}(g(x)f(x,0,1))-\varphi(x)f(x,0,1),\quad
Af(x,y,2)=-\frac{\partial }{\partial x}(g(x)f(x,y,2))-\frac{\partial }{\partial
y}(f(x,y,2)),
\]
where the derivatives are understood in the sense of distributions.
The operator $B\colon \mathcal{D}\to L^1(E,m)$ and the boundary  operator $\Psi\colon \mathcal{D}\to L^1(\nbound ,m^{-})$ are given by
\begin{equation}\label{d:BPsi}
 Bf(x,0,1)=2\gamma^{+}f(2x,T_{II},2),\quad Bf(x,y,2)=0, \quad
\Psi f(x,0,2)=\varphi(x)f(x,0,1),\quad x>0.
\end{equation}

\begin{corollary}
The induced semigroup $\{P(t)\}_{t\ge 0}$ corresponding to $(\phi,\vartheta,\jump)$ is stochastic and its generator is the operator $(A_{\Psi}+B,\mathcal{D}(A_\Psi))$, where $\mathcal{D}(A_\Psi)=\{f\in \mathcal{D}: \gamma^{-}f(x,0,2)=\varphi(x) f(x,0,1), x>0\}.$  \end{corollary}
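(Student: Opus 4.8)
The plan is to derive both assertions from Corollary~\ref{c:stochastic}. The explicit characteristics satisfy Assumptions~\ref{a:nons}--\ref{a:jump}: the flow $\phi_t^1(x)=\mathfrak{G}^{-1}(\mathfrak{G}(x)+t)$ has cocycle $J_t(x,y,i)=g(\phi_t^1(x))/g(x)$, the boundary measures are the stated $m^{\pm}$, local integrability of $\varphi$ gives Assumption~\ref{a:varp}, and the operators $P_0,P_{\partial}$ realizing $\jump$ are those read off \eqref{d:BPsi}. Hence Theorem~\ref{th:ptsmall} already produces the induced semigroup $\{P(t)\}_{t\ge 0}$, shows its generator $(G,\mathcal{D}(G))$ extends $(A_{\Psi}+B,\mathcal{D}(A_{\Psi}))$, and yields the conservativeness identity \eqref{eq:zeroo} (every jump is mass preserving, $\jump(\cdot,E)=1$). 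It then remains only to check \eqref{c:rpsilam0} and \eqref{eq:sspert} for some $\lambda>0$, after which the ``in particular'' part of Corollary~\ref{c:stochastic} gives at once that $\{P(t)\}_{t\ge 0}$ is stochastic and that $(A_{\Psi}+B,\mathcal{D}(A_{\Psi}))$ is its generator.

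Condition \eqref{c:rpsilam0} is immediate: in fact $\Psi\Psi(\lambda)=0$. A phase~$I$ state $(x,0,1)$ has $\nlife(x,0,1)=\infty$, since the backward flow $\phi_{-t}(x,0,1)=(\phi_{-t}^1(x),0,1)$ never leaves phase~$I$ and so never meets $\nbound=(0,\infty)\times\{0\}\times\{2\}$. By the defining formula \eqref{e:psilam} this forces $\Psi(\lambda)f_{\partial}$ to vanish on all of phase~$I$, and since $\Psi$ reads only the phase~$I$ values of its argument (see \eqref{d:BPsi}), we get $\Psi\Psi(\lambda)f_{\partial}=0$. Thus $(\Psi\Psi(\lambda))^n=0$ for $n\ge 1$.

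The substantive step is \eqref{eq:sspert}, and the key structural fact is that the range of $B$ lies in phase~$I$, so every $BR(\lambda,A_{\Psi})g$ is supported on $(0,\infty)\times\{0\}\times\{1\}$; it therefore suffices to estimate $BR(\lambda,A_{\Psi})$ on phase~$I$ inputs. For nonnegative $f$ supported on phase~$I$ I would first note that $u:=R(\lambda,A_0)f$ is again supported on phase~$I$, whence $\gamma^{+}u=0$ on $\pbound$ by the trace formula in Lemma~\ref{r:gamA0}. Using $\Psi\Psi(\lambda)=0$ and Lemma~\ref{l:APsi} gives $R(\lambda,A_{\Psi})f=u+\Psi(\lambda)\Psi u$, so that $BR(\lambda,A_{\Psi})f(x,0,1)=2\,\gamma^{+}\Psi(\lambda)\Psi u(2x,T_{II},2)$; integrating and substituting $x'=2x$ turns the norm into $\int_{\pbound}\gamma^{+}\Psi(\lambda)(\Psi u)\,dm^{+}$, which by \eqref{e:Grb0} equals $\int_{\nbound}e^{-\int_0^{\life(z)}(\lambda+\vartheta(\phi_r(z)))dr}\Psi u\,dm^{-}$. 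On $\nbound$ one has the two decisive simplifications $\life(z)\equiv T_{II}$ and $\vartheta\equiv 0$ (no killing in phase~$II$), so the exponential collapses to $e^{-\lambda T_{II}}$ and, with the conservation identity of Lemma~\ref{r:gamA0} supplying $\|\Psi u\|=\|f\|-\lambda\|u\|\le\|f\|$,
\[
\|BR(\lambda,A_{\Psi})f\|=e^{-\lambda T_{II}}\|\Psi u\|\le e^{-\lambda T_{II}}\|f\|.
\]
Thus $BR(\lambda,A_{\Psi})$ is an $e^{-\lambda T_{II}}$-contraction on phase~$I$ functions, and since its range already lies in phase~$I$, $\|(BR(\lambda,A_{\Psi}))^{2}\|\le e^{-\lambda T_{II}}<1$, giving \eqref{eq:sspert}.

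I expect the main obstacle to be exactly the failure of the naive bound $\|BR(\lambda,A_{\Psi})\|<1$ on all of $L^1(E,m)$: a density fed in on phase~$II$ can flow straight to $\pbound$ and be carried back by $B$ with almost full mass, so the \emph{first} power need not contract—this is precisely the regime in which the hypotheses of Corollary~\ref{c:sto} are unavailable, $\varphi$ being only locally integrable and the forced jumps from $\pbound$ landing in the interior rather than on $\nbound$. The resolution, and the one genuinely model-specific input, is that $B$ maps into phase~$I$ while the phase~$II$ transit time is the deterministic constant $T_{II}$, so after one step every trajectory must pay the factor $e^{-\lambda T_{II}}$; iterating once more yields the contraction of the second power. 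With \eqref{c:rpsilam0} and \eqref{eq:sspert} established, Corollary~\ref{c:stochastic} completes the argument, and the domain description $\mathcal{D}(A_{\Psi})=\{f\in\mathcal{D}:\gamma^{-}f(x,0,2)=\varphi(x)f(x,0,1)\}$ is simply the condition $\gamma^{-}f=\Psi f$ read off \eqref{d:BPsi}.
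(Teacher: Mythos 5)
Your proposal is correct and takes essentially the same route as the paper: you obtain $\Psi\Psi(\lambda)=0$ from $\nlife(x,0,1)=\infty$ exactly as the paper does, and your contraction estimate for $BR(\lambda,A_{\Psi})$ on phase-I-supported inputs (via the resolvent formula \eqref{ex:tp}, the vanishing of $\gamma^{+}R(\lambda,A_0)$ on such inputs, \eqref{e:Grb0} with $\vartheta\equiv 0$ and $\life\equiv T_{II}$ on $\nbound$, and the conservation identity of Lemma~\ref{r:gamA0}) is the same computation the paper performs directly on $BR(\lambda,A_{\Psi})Bf$, yielding the identical bound $\|(BR(\lambda,A_{\Psi}))^2\|\le e^{-\lambda T_{II}}<1$. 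The only cosmetic difference is the final citation: the paper points to Corollary~\ref{c:both}, whereas your appeal to the ``in particular'' clause of Corollary~\ref{c:stochastic} (together with \eqref{eq:zeroo} from Theorem~\ref{th:ptsmall}) is, if anything, the more precise reference for the stochasticity claim.
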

\begin{proof} First we make use of  Lemma~\ref{l:APsi} to show that $A_\Psi$ is  resolvent positive. Since $\nlife(x,0,1)=\infty$ for $x>0$, we have $\Psi(\lambda)f_{\partial}(x,0,1)=0$ by \eqref{e:psilam}. Hence,
$
\Psi\Psi(\lambda)f_{\partial}=0
$
and the operator $I_{\partial}-\Psi\Psi(\lambda)$ is the identity.  Lemma~\ref{l:APsi}
now implies that $A_\Psi$ is resolvent positive and that
\begin{equation}\label{ex:tp}
R(\lambda,A_\Psi)f=R(\lambda,A_0)f+\Psi(\lambda)\Psi R(\lambda,A_0)f, \quad f\in L^1(E,m).
\end{equation}
For any nonnegative $f\in L^1(E,m)$, we have
\[
\|BR(\lambda,A_{\Psi})Bf\|=\int_0^\infty \gamma^{+}R(\lambda,A_{\Psi})Bf(x,T_{II},2)dx.
\]
This together with \eqref{ex:tp}  gives
\[
\|BR(\lambda,A_{\Psi})Bf\|=\int_{\pbound } \gamma^{+}R(\lambda,A_{0})Bf dm^{+}+\int_{\pbound } \gamma^{+}\Psi(\lambda)\Psi R(\lambda,A_0)Bf dm^{+}.
\]
Since $\life(x,0,1)=+\infty$  and  $Bf(x,y,2)=0$ for $x>0$, $y\in (0,T_{II})$,  it follows from Lemma~\ref{r:gamA0} that the integral of $\gamma^{+}R(\lambda,A_{0})Bf$ is zero. Lemma~\ref{r:psil} now implies that
\begin{align*}
\|BR(\lambda,A_{\Psi})Bf\|=\int_{\nbound }e^{-\lambda T_{II}}\Psi R(\lambda,A_0)Bf dm^{-}=e^{-\lambda T_{II}}\int_{0}^\infty \varphi(x)R(\lambda,A_0)Bf(x,0,1)dx.
\end{align*}
Observe that the last integral is smaller than $\|Bf\|$, by Remark~\ref{r:gamA0}.
Consequently, we obtain
\[
\|(BR(\lambda,A_{\Psi}))^2f\|\le e^{-\lambda T_{II}}\|BR(\lambda,A_{\Psi})f\|\le  e^{-\lambda T_{II}}\|f\|.
\]
Corollary~\ref{c:both} implies that the induced semigroup is stochastic and that its generator is $(A_{\Psi}+B,\mathcal{D}(A_\Psi))$.
\end{proof}

We close this section by looking at invariant densities for the corresponding operator $K $ as in \eqref{d:operatorK} and for the induced semigroup.
We see that $(f,f_{\partial})$ is invariant for the operator $K $  if and only if
\[
\begin{split}
f(x,0,1)&=2R_0(f,f_{\partial})(2x,T_{II},2), \quad f(x,y,2)=0, \\
f_{\partial}(x,0,2)&=\varphi(x) R_0(f,f_{\partial})(x,0,1),\quad  x>0, y\in (0,T_{II}),
\end{split}
\]
where $R_0$ as defined in \eqref{d:R0p}  is given by
\[
R_0(f,f_{\partial})(x,0,1)=\frac{1}{g(x)}\int_{0}^x e^{\sQ (z)-\sQ (x)} f(z,0,1)dz \quad \text{with }\sQ(x)=\int_{\overline{x}}^x \frac{\varphi(y)}{g(y)}dy, \quad x>0,
\]
and
\[
R_0(f,f_{\partial})(x,y,2)=\int_{0}^{y} f(\phi_{-t}^1(x),y-t,2)\frac{g(\phi_{-t}^1(x))}{g(x)}dt + f_{\partial}(\phi_{-y}^1(x),0,2)\frac{g(\phi_{-y}^1(x))}{g(x)}, \quad x>0, y\in [0,T_{II}].
\]
Hence,
\begin{equation*}
f(x,0,1)=2f_{\partial}(\phi_{-T_{II}}^1(2x),0,2)\frac{g(\phi_{-T_{II}}^1(2x))}{g(2x)}\quad\text{and}\quad  f_{\partial}(x,0,2)=\frac{\varphi(x)}{g(x)}\int_{0}^x e^{\sQ (z)-\sQ (x)} f(z,0,1)dz.
\end{equation*}
Observe that
\[
\int_0^\infty  f_{\partial}(x,0,2)dx
=\int_0^\infty f(z,0,1)dz.
\]
Thus, $(f,f_{\partial})$ is an invariant density for the operator $K $ if and only if $f(x,y,2)=0$ and $f_{1}(x)=f(x,0,1)$ is an invariant density for the operator $P_1$ on $L^1(0,\infty)$ given by
\begin{equation}\label{op:cc1} P_1 f_1(x)=-\int _{0}^{\lambda (x)} \dfrac
{\partial}{\partial x}\left (
    e^{\sQ (z)-\sQ (\lambda(x))} \right )
    f_1(z)\, dz, \quad f_1\in L^1(0,\infty), \quad \text{where }\lambda(x)=\phi_{-T_{II}}^1(2x).
\end{equation}
Consequently, for $\overline{f}=R_0(f,f_{\partial})$ as in \eqref{d:invsemi},  we obtain
\[
\overline{f}(x,0,1)
=\frac{1}{g(x)}\int_{0}^x e^{\sQ (z)-\sQ (x)} f_{1}(z)dz\quad \text{and}\quad \overline{f}(x,y,2)=\frac{\varphi(\phi_{-y}^1(x))}{g(\phi_{-y}^1(x))}\int_{0}^{\phi_{-y}^1(x)} e^{\sQ (z)-\sQ (\phi_{-y}^1(x))} f_{1}(z)dz,
\]
and if $\overline{f}$  is integrable, then the semigroup $\{P(t)\}_{t\ge 0}$ has an invariant density, by Theorem \ref{c:invsem}.

It follows from \cite{gackilasota90} that if
\begin{equation*}\label{eq:blas}
\liminf_{x\to \infty}\bigl( \sQ (\lambda(x))-\sQ (x)\bigr)>1
\end{equation*}
then $P_1$ as defined in \eqref{op:cc1} has a unique  invariant density and we denote it by $f_{1}$.
We have
\[
\|\overline{f}\|=\int_0^{\infty} \left(\int_{z}^{\infty} \frac{1}{g(x)}e^{\sQ (z)-\sQ (x)}dx+T_{II}\right)f_1(z)dz
\]
and
\[
\int_{z}^{\infty} \frac{1}{g(x)}e^{\sQ (z)-\sQ (x)}dx=\int_0^\infty \mathbb{P}(T_{I}>t|x(0)=z)dt= \mathbb{E}_z(T_{I}).
\]
Hence, $\overline{f}$ is integrable if and only if
\[
\int_{0}^{\infty}\mathbb{E}_z(T_{I})f_1(z)dz<\infty. 
\]

\appendix
\section{Substochastic semigroups for flows and the transport operator}\label{s:flow}
\label{s:appen}

\renewcommand{\appendixname}{}%

In this appendix we prove Theorems~\ref{l:Green0} and \ref{t:gs0t}.  We need some auxiliary results concerning the set $\mathcal{D}_{\max}$ and the transport operator $\Tm$ defined  in \eqref{d:Tm}. We consider a flow $\{\phi_t\}_{t\in \mathbb{R}}$ on $\gstate$   satisfying Assumption~\ref{a:nons} and a set $E=E^0\cup \nbound\setminus \nbound\cap\pbound$ with $E^0\in \mathcal{B}(\gstate)$ and $\bound$ defined as in \eqref{d:activep}, \eqref{d:activen}.
Since the cocycle $\{J_t\}_{t\in \mathbb{R}}$ satisfies \eqref{d:RND},  the change of
variables leads to
\begin{equation}\label{e:pitgen}
\int_{\gstate}\mathbf{1}_{\F}(\phi_t(x))f(x)m(dx)=\int_{\gstate}\mathbf{1}_{\F}(x)f(\phi_{-t}(x))J_{-t}(x)m(dx),\quad \F \in \mathcal{B}(\gstate),f\in L^1(\gstate,m).
\end{equation}
We note that if we define
\begin{equation*}
\widehat{\phi}(t)f(x)=f(\phi_{-t}(x))J_{-t}(x), \quad x\in \gstate, t\in \mathbb{R},
\end{equation*}
for any  Borel measurable function $f\colon \gstate\to \mathbb{R}$, then
$\{\widehat{\phi}(t)\}_{t\ge 0}$ is a stochastic semigroup on $L^1(\gstate,m)$, by \cite[Theorem 4.12]{rudnickityran17}, and we obtain the following result.

\begin{theorem}\label{t:semP0}
Let
\begin{equation}\label{d:semP0}
S_0(t)f(x)=
\mathbf{1}_{E}(\phi_{-t}(x))f(\phi_{-t}(x))J_{-t}(x),\quad x\in E, t>0, f\in L^1(E,m).
\end{equation}
Then $\{S_0(t)\}_{t\ge 0}$ is a substochastic semigroup on $L^1(E,m)$ and
\begin{equation}\label{eq:P0St}
\int_E \mathbf{1}_{[0,\life(x))}(t)\psi (\phi_t(x))f(x)\,m(dx)=\int_E \psi (x) S_0(t)f(x)\,m(dx)
\end{equation}%
for all nonnegative Borel measurable $\psi \colon E\to \mathbb{R}$ and nonnegative $f\in L^1(E,m)$.
\end{theorem}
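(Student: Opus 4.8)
The plan is to realise $\{S_0(t)\}_{t\ge 0}$ as the restriction to $E$ of the stochastic semigroup $\{\widehat{\phi}(t)\}_{t\ge 0}$ on $L^1(\gstate,m)$. Extending each $f\in L^1(E,m)$ by zero to $\gstate$, the defining formula \eqref{d:semP0} is literally $S_0(t)f=\mathbf{1}_E\,\widehat{\phi}(t)f$, since $f(\phi_{-t}(x))=\mathbf{1}_E(\phi_{-t}(x))f(\phi_{-t}(x))$ once $f$ vanishes off $E$. As multiplication by $\mathbf{1}_E$ is a positive contraction on $L^1(\gstate,m)$ and $\widehat{\phi}(t)$ is a positive contraction, each $S_0(t)$ is at once a positive contraction on $L^1(E,m)$ with $S_0(0)=I$; moreover $t\mapsto S_0(t)f$ is continuous because $t\mapsto\widehat{\phi}(t)f$ is and $\mathbf{1}_E\cdot$ is bounded, so strong continuity is unconditional. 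Thus positivity, contractivity and continuity are immediate, and only the semigroup law and the duality \eqref{eq:P0St} require work; both I would reduce to a single geometric fact about how orbits meet $E$.

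That fact, which I expect to be the crux, is the \emph{no re-entry} (single interval) property: for $m$-a.e. $x\in E$,
\[
\{t>0:\phi_{-t}(x)\in E\}=\bigl(0,\nlife(x)\bigr)\quad\text{and}\quad\{t>0:\phi_{t}(x)\in E\}=\bigl(0,\life(x)\bigr),
\]
so that $\mathbf{1}_E(\phi_{-t}(x))=\mathbf{1}_{[0,\nlife(x))}(t)$ and $\mathbf{1}_E(\phi_{t}(x))=\mathbf{1}_{[0,\life(x))}(t)$ for a.e.\ $x$ and all $t$. Since $m(\partial E)=0$ one works with $x\in E^0$; by \eqref{d:activep}--\eqref{d:activen} the boundaries $\nbound,\pbound$ are exactly the first backward/forward crossing points of $\partial E^0$, whence $\phi_{-r}(x)\in E^0$ for $r\in[0,\nlife(x))$ with $\phi_{-\nlife(x)}(x)\in\nbound$, and symmetrically in forward time. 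The substantive content is that once the orbit has left $E$ through $\bound$ it does not return; this is precisely the situation in which the boundary coordinates $w_{\pm}$ of \eqref{d:Wpm} parametrise each orbit's single passage through $E$, and it is the only place where the geometry of the flow relative to $E$ enters. I would isolate this as a lemma, proving it from the first-crossing description of $\bound$ together with the standing structure that $\gstate$ is the flow saturation along which $E$ is traversed once.

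Granting the single interval property, the semigroup law is a cocycle computation. Writing out $S_0(t)S_0(s)f(x)$ and using \eqref{d:jacob} in the form $J_{-s}(\phi_{-t}(x))J_{-t}(x)=J_{-(t+s)}(x)$ gives
\[
S_0(t)S_0(s)f(x)=\mathbf{1}_E(\phi_{-t}(x))\,\mathbf{1}_E(\phi_{-(t+s)}(x))\,f(\phi_{-(t+s)}(x))\,J_{-(t+s)}(x).
\]
The extra factor $\mathbf{1}_E(\phi_{-t}(x))$ is redundant wherever the integrand is nonzero: there $\phi_{-(t+s)}(x)\in E$, so $t+s\in(0,\nlife(x))$, hence $t<t+s<\nlife(x)$ and $\phi_{-t}(x)\in E$ by the backward half of the property. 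Therefore $S_0(t)S_0(s)=S_0(t+s)$.

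For the duality \eqref{eq:P0St}, extend $f\ge 0$ and $\psi\ge 0$ by zero to $\gstate$. The right-hand side equals $\int_{\gstate}\psi(x)\,f(\phi_{-t}(x))J_{-t}(x)\,m(dx)=\int_{\gstate}\psi\,\widehat{\phi}(t)f\,dm$, and the change of variables \eqref{e:pitgen}, extended from indicators to nonnegative $\psi$ by monotone approximation, rewrites this as $\int_{\gstate}\psi(\phi_t(x))f(x)\,m(dx)=\int_{E}\psi(\phi_t(x))f(x)\,m(dx)$. Since $\psi$ is supported in $E$, the integrand equals $\mathbf{1}_E(\phi_t(x))\psi(\phi_t(x))f(x)$, and the forward half of the single interval property replaces $\mathbf{1}_E(\phi_t(x))$ by $\mathbf{1}_{[0,\life(x))}(t)$, which is exactly the left-hand side of \eqref{eq:P0St}. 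The only delicate point throughout is the no re-entry lemma; once it is in hand the remaining steps are routine.
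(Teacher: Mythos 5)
You take the same route as the paper: its entire proof of Theorem~\ref{t:semP0} consists of the observation that $\widehat{\phi}(t)f(x)=f(\phi_{-t}(x))J_{-t}(x)$ defines a stochastic semigroup on $L^1(\gstate,m)$ (by \cite[Theorem 4.12]{rudnickityran17}) together with the change of variables \eqref{e:pitgen}, and your identification $S_0(t)f=\mathbf{1}_E\,\widehat{\phi}(t)f$ for $f$ extended by zero, with positivity, contractivity and strong continuity inherited from $\widehat{\phi}(t)$, is exactly that reduction. You are also right that the semigroup law and \eqref{eq:P0St} then come down to the single-passage identities $\mathbf{1}_E(\phi_t(x))=\mathbf{1}_{[0,\life(x))}(t)$ and $\mathbf{1}_E(\phi_{-t}(x))=\mathbf{1}_{[0,\nlife(x))}(t)$ for $m$-a.e.\ $x\in E$ --- identities the paper uses silently.

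The gap is in your ``no re-entry'' lemma: it cannot be proved from the hypotheses in force (Assumption~\ref{a:nons} and $E^0$ Borel), because it is false at that level of generality, and your appeal to ``the standing structure that $\gstate$ is the flow saturation along which $E$ is traversed once'' is not an assumption of the paper but precisely the missing hypothesis. Take $\gstate=\mathbb{R}$, $\phi_t(x)=x+t$, $m=\Leb$, $J_t\equiv 1$ and $E^0=(0,1)\cup(2,3)$. Then Assumption~\ref{a:nons} holds, and so does Assumption~\ref{a:dive} with $m^{+}=\delta_{1}+\delta_{3}$, $m^{-}=\delta_{0}+\delta_{2}$; one finds $\pbound=\{1,3\}$, $\nbound=\{0,2\}$, $E=[0,1)\cup[2,3)$, and for $x\in(0,1)$ we have $\life(x)=1-x$ while $\phi_t(x)\in E$ again for all $t\in(2-x,3-x)$. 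Consequently $\{t>0:\phi_t(x)\in E\}$ is not the interval $(0,\life(x))$: with $f=\mathbf{1}_{(0,1)}$ and $\psi=\mathbf{1}_{(2,3)}$ the left-hand side of \eqref{eq:P0St} at $t=2$ equals $0$ while the right-hand side equals $1$, and $S_0(1)S_0(1)f=0\neq\mathbf{1}_{(2,3)}=S_0(2)f$, so \eqref{d:semP0} does not even define a semigroup here. The first-crossing definitions \eqref{d:activep}--\eqref{d:activen} yield only one half of your lemma: if the forward orbit of $x\in E^0$ leaves $E^0$ at an attained exit time $t^*$, then $\phi_{t^*}(x)\in\overline{E}^0\setminus E^0$ belongs to $\pbound$, whence $\life(x)=t^*$ and $\phi_s(x)\in E^0$ on $[0,\life(x))$; nothing excludes later returns. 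So your plan --- and, in fairness, the paper's own one-line derivation, which elides the very same point --- becomes correct only after adding single passage ($m$-a.e.\ orbit meets $E$ in a single time interval) as an explicit standing hypothesis, or after replacing $\mathbf{1}_E(\phi_{-t}(x))$ in \eqref{d:semP0} by $\mathbf{1}_{\{t<\nlife(x)\}}$, for which the semigroup law follows unconditionally from the relation $\nlife(\phi_{-t}(x))=\nlife(x)-t$ valid on $\{t<\nlife(x)\}$. Granting either fix, the rest of your computations (the cocycle calculation for the semigroup law and the monotone-class extension of \eqref{e:pitgen} for the duality) are routine and correct.
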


Given  $\varrho\in L^1(0,\infty)$ we define
\begin{equation}\label{d:diamond}
[\varrho\diamond f](x)=\int_0^\infty \varrho(s)S_0(s)f(x)\,ds,\quad x\in E, f\in L^1(E,m),
\end{equation}
where $\{S_0(t)\}_{t\ge 0}$ is as in \eqref{d:semP0}. Since $\{S_0(t)\}_{t\ge 0}$  is a substochastic semigroup, we see that for any
$f\in L^1(E,m)$ we have
\[
\|\varrho\diamond f\|\le \int_{0}^{\infty} |\varrho(s)|\|S_0(s)f\|\,ds\le \|f\| \int_{0}^{\infty} |\varrho(s)|\,ds,
\]
showing that $\varrho\diamond f\in L^1(E,m)$.

\begin{lemma}\label{l:rhof} Suppose that $\varrho$ is continuously differentiable with $\varrho,\varrho'\in L^1(0,\infty)$ and  that $f\in L^1(E,m)$. Then $\varrho\diamond f\in \mathcal{D}_{\max}$,
\begin{equation}\label{e:chec}
\int_E [\varrho\diamond f](x)\frac{d(\psi  \circ \phi_t)}{dt}\Big|_{t=0}(x)m(dx)=-\int_E \left([\varrho'\diamond f](x)+\varrho(0)f(x)\right)\psi (x)m(dx)
\end{equation}
for all $\psi \in \mathfrak{N}$ and
\begin{equation}\label{e:Tmrhof}
\Tm[\varrho\diamond f]=-\varrho'\diamond f-\varrho(0)f.
\end{equation}
Moreover, if $f\in \mathcal{D}_{\max}$ then
\begin{equation}\label{e:rhoTmf}
\varrho\diamond \Tm f=\Tm[\varrho\diamond f].
\end{equation}
\end{lemma}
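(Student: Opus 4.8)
The plan is to prove the three displayed identities in turn; \eqref{e:chec} carries the real content, \eqref{e:Tmrhof} is then immediate, and \eqref{e:rhoTmf} follows by a commutation argument whose only delicate point is a boundary term.

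For \eqref{e:chec} I would begin from the definition \eqref{d:diamond}, substitute it into the left-hand side, and interchange the order of integration (legitimate since $\{S_0(t)\}$ is a contraction semigroup and $\varrho\in L^1$). This reduces matters to $\int_0^\infty\varrho(s)\int_E S_0(s)f(x)\,\frac{d(\psi\circ\phi_t)}{dt}\Big|_{t=0}(x)\,m(dx)\,ds$. Applying the duality \eqref{eq:P0St} to the positive and negative parts of the bounded function $\frac{d(\psi\circ\phi_t)}{dt}\big|_{t=0}$ rewrites the inner integral as $\int_E\mathbf{1}_{[0,\life(x))}(s)\big(\tfrac{d(\psi\circ\phi_t)}{dt}|_{t=0}\big)(\phi_s(x))f(x)\,m(dx)$. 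The crucial remark, coming from the flow law $\phi_{t+s}=\phi_t\circ\phi_s$, is that $\big(\tfrac{d(\psi\circ\phi_t)}{dt}|_{t=0}\big)(\phi_s(x))=\tfrac{d}{ds}\psi(\phi_s(x))$. I would then use Fubini once more and integrate by parts in $s$ over $[0,\life(x))$: the endpoint $s=0$ yields $-\varrho(0)\psi(x)$, while the contribution as $s\uparrow\life(x)$ vanishes, since $\psi$ has compact support in $E^0$ (so $\psi(\phi_s(x))\to0$ at the exit time) and $\varrho(s)\to0$ as $s\to\infty$ because $\varrho,\varrho'\in L^1(0,\infty)$. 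Reversing the duality and Fubini identifies the remaining term as $-\int_E(\varrho'\diamond f)\psi\,dm$, which is exactly \eqref{e:chec}. Because $\varrho'\diamond f+\varrho(0)f\in L^1(E,m)$, identity \eqref{e:chec} is precisely the defining relation \eqref{d:Tm} with $g=-\varrho'\diamond f-\varrho(0)f$; hence $\varrho\diamond f\in\mathcal{D}_{\max}$ and \eqref{e:Tmrhof} holds.

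For \eqref{e:rhoTmf} I would first note that $\Tm$ is closed (immediate by testing its defining relation against $\mathfrak{N}$) and that the generator $G_0$ of $\{S_0(t)\}$ satisfies $G_0\subseteq\Tm$ (test the difference quotient $h^{-1}(S_0(h)g-g)$ against $\mathfrak{N}$ and let $h\downarrow0$). On $\mathcal{D}(G_0)$ the commutation is then clean: for $g\in\mathcal{D}(G_0)$ one has $S_0(s)G_0g=\frac{d}{ds}S_0(s)g$, so integrating by parts in $s$ inside \eqref{d:diamond} gives $\varrho\diamond G_0g=-\varrho(0)g-\varrho'\diamond g=\Tm[\varrho\diamond g]$. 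To reach an arbitrary $f\in\mathcal{D}_{\max}$ I would test $\varrho\diamond\Tm f$ against $\psi\in\mathfrak{N}$ and, exactly as above, reduce it to $\int_E\Tm f\cdot\Psi_\varrho\,dm$ with $\Psi_\varrho(x)=\int_0^{\life(x)}\varrho(s)\psi(\phi_s(x))\,ds$; using $\life(\phi_t(x))=\life(x)-t$ one computes $\frac{d(\Psi_\varrho\circ\phi_t)}{dt}\big|_{t=0}=-\varrho(0)\psi-\Psi_{\varrho'}$, so that the defining relation of $\Tm$ applied to $\Psi_\varrho$ would give back $\int_E(-\varrho(0)f-\varrho'\diamond f)\psi\,dm=\int_E\Tm[\varrho\diamond f]\psi\,dm$.

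The main obstacle lies precisely in applying \eqref{d:Tm} to $\Psi_\varrho$. This function is bounded and $C^1$ along the flow, but it need not have compact support in $E^0$: the backward flow-out of $\operatorname{supp}\psi$ can accumulate on the incoming boundary $\nbound$, so $\Psi_\varrho\notin\mathfrak{N}$. Passing instead through a weighted Green identity, or approximating $\Psi_\varrho$ by elements of $\mathfrak{N}$ through flow-adapted cutoffs vanishing near $\nbound$ and controlling the residual via the change of variables \eqref{e:eminus}, produces an extra incoming-boundary term $\int_{\nbound}\gamma^{-}f\cdot\Psi_\varrho\,dm^{-}$ (the outgoing term drops because $\Psi_\varrho$ vanishes on $\pbound$, where $\life=0$). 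Showing that this contribution is absorbed is the crux of \eqref{e:rhoTmf}: it vanishes identically when $\gamma^{-}f=0$, which is the regime in which the commutation is needed, and the general case must be handled by the approximation just described. Once the boundary term is disposed of, comparison with \eqref{e:Tmrhof} together with the fact that $\mathfrak{N}$ separates $L^1(E,m)$ (the property that makes $\Tm$ well defined) yields \eqref{e:rhoTmf}.
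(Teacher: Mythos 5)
Your treatment of \eqref{e:chec} and \eqref{e:Tmrhof} is essentially the paper's own proof: the paper first establishes the duality identity \eqref{e:rteta} (your Fubini-plus-\eqref{eq:P0St} step), then performs the same integration by parts in $s$ with the same endpoint analysis (compact support of $\psi$ in $E^0$ kills the limit when $\life(x)<\infty$, while $\varrho(s)\to 0$ as $s\to\infty$ --- correctly deduced by you from $\varrho,\varrho'\in L^1(0,\infty)$ --- handles $\life(x)=\infty$), and reads off \eqref{d:Tm} with $g=-\varrho'\diamond f-\varrho(0)f$. No issues there.

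The divergence is at \eqref{e:rhoTmf}, and here your suspicion about the paper is justified but your own conclusion does not close. The paper's proof simply asserts that $\psi_1(x)=\int_0^{\life(x)}\varrho(s)\psi(\phi_s(x))\,ds$ belongs to $\mathfrak{N}$ and applies \eqref{d:Tm} to it; as you observe, $\{\psi_1\neq 0\}$ is essentially the backward flow-out of $\operatorname{supp}\psi$ and can accumulate on $\nbound$, so the compact-support requirement fails in general. Your computation of the resulting defect $\int_{\nbound}\gamma^{-}f\cdot\psi_1\,dm^{-}$ is also correct. The gap is your final claim that "the general case must be handled by the approximation just described": no cutoff scheme can absorb a genuinely nonzero boundary contribution, and in fact \eqref{e:rhoTmf} fails for $f\in\mathcal{D}_{\max}$ with $\gamma^{-}f\neq 0$. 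Take $E^0=(0,\infty)$, $\phi_t(x)=x+t$, $m$ Lebesgue (so $\nbound=\{0\}$, $\pbound=\emptyset$, $J\equiv 1$, $\life\equiv\infty$, $\nlife(x)=x$), and $f(x)=e^{-x}$, $\varrho(s)=e^{-s}$. Then $f\in\mathcal{D}_{\max}$ with $\Tm f=-f'=f$, so $\varrho\diamond\Tm f=\varrho\diamond f=xe^{-x}$, whereas $\Tm[\varrho\diamond f]=-(xe^{-x})'=(x-1)e^{-x}$; the discrepancy $e^{-x}=\varrho(\nlife(x))\,\gamma^{-}f(0)$ is exactly your boundary term, which in general reads $\mathbf{1}_{\{\nlife(x)<\infty\}}\varrho(\nlife(x))J_{-\nlife(x)}(x)\,\gamma^{-}f(\phi_{-\nlife(x)}(x))$ via \eqref{e:eminus}. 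So what your argument actually proves is \eqref{e:rhoTmf} on $\{f\in\mathcal{D}_{\max}:\gamma^{-}f=0\}$. That restricted version is harmless for Theorem~\ref{t:pt0g}, which uses only \eqref{e:Tmrhof}; but \eqref{e:rhoTmf} is invoked in the proof of Theorem~\ref{thm:abl} for arbitrary $f\in\mathcal{D}_{\max}$, where the honest statement is the commutation up to the defect above, supported in $\{x:\nlife(x)\le 1/n\}$ for mollifiers $\varrho_n$; since this set shrinks to $\nbound$, the representation along interior segments of flow lines survives, but that bookkeeping must be done explicitly. In short: you have correctly identified a genuine flaw in the paper's proof (the claim $\psi_1\in\mathfrak{N}$), yet your write-up does not prove the lemma as stated either --- the unrestricted identity needs the trace hypothesis or the correction term, and your closing hand-wave cannot supply either.
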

\begin{proof}
First observe that if $\beta$ is a bounded measurable function and $f\in L^1(E,m)$, then
\begin{equation}\label{e:rteta}
\int_{E}[\varrho\diamond f](x)\beta(x)m(dx)=\int_E f(x)\int_0^{\life(x)} \varrho(s)\beta(\phi_s(x))\,ds\, m(dx),
\end{equation}
since
\[
\begin{split}
\int_{E}\int_0^\infty \varrho(s)S_0(s)f(x)\,ds\,\beta(x)m(dx)
&=\int_0^\infty \varrho(s)\int_E f(x)1_{[0,\life(x))}(s)\beta(\phi_s(x))m(dx)\,ds\\
&=\int_E f(x)\int_0^{\life(x)} \varrho(s)\beta(\phi_s(x))\,ds\, m(dx),
\end{split}
\]
where we used \eqref{d:diamond}, \eqref{eq:P0St} and Fubini's theorem.

Now fix  $\psi \in \mathfrak{N}$ and take
\[
\beta(x)=\frac{d(\psi  \circ \phi_t)}{dt}\Big|_{t=0}(x), \quad x\in E.
\]
We have $\beta(\phi_s(x))=\frac{d}{ds}(\psi  (\phi_s(x)))$ for $0<s<\life(x)$. Hence, integration by parts leads to
\[
\int_0^{\life(x)} \varrho(s)\frac{d}{ds}(\psi  (\phi_s(x)))\,ds=\lim_{s\to \life(x)}\varrho(s)\psi (\phi_{s}(x))-\varrho(0)\psi (x)-\int_0^{\life(x)}\varrho'(s)\psi  (\phi_s(x))\,ds.
\]
If $\life(x)<\infty$ then the limit in the last equation is equal to zero,  since $\psi  $ has a compact support in  $E^0$. If $\life(x)=\infty$, then the limit is also zero, since $\psi $ is bounded and $\varrho(s)\to 0$ as $s\to \infty$. This together with \eqref{e:rteta} gives
\[
\begin{split}
\int_E [\varrho\diamond f](x)\beta(x)m(dx)
&=\int_E f(x)\left(- \int_0^{\life(x)}\varrho'(s) \psi  (\phi_s(x))\,ds-\varrho(0)\psi (x)\right) m(dx).
\end{split}
\]
Using again Fubini's theorem and condition \eqref{eq:P0St}, we see that
\[
\begin{split}
\int_E f(x) \int_0^{\life(x)}\varrho'(s)\psi (\phi_s(x))\,ds\,m(dx)
&=\int_0^\infty \varrho'(s)\int_E f(x)1_{[0,\life(x))}(s)\psi (\phi_s(x))\,m(dx)\,ds\\
&=\int_0^\infty \varrho'(s)\int_E S_0(s)f(x)\psi (x)\,m(dx)\,ds \\
&=\int_E \int_0^\infty \varrho'(s)S_0(s)f(x)\,ds \,\psi (x) \,m(dx) .
\end{split}
\]
Therefore \eqref{d:Tm} holds true, implying  \eqref{e:Tmrhof}.

Since $\Tm f\in L^1(E,m)$ for $f\in \mathcal{D}_{\max}$, it follows from \eqref{e:rteta} that
\[
\int_{E}[\varrho\diamond \Tm f](x)\psi (x)m(dx)=\int_E \Tm f(x)\int_0^{\life(x)} \varrho(s)\psi (\phi_s(x))\,ds\, m(dx)
\]
for all $\psi \in \mathfrak{N}$. Observe that the function
\[
\psi _1(x)=\int_0^{\life(x)} \varrho(s)\psi (\phi_s(x))\,ds,\quad x\in E,
\]
belongs to $\mathfrak{N}$ and that
\[
\frac{d}{dt}(\psi _1(\phi_t(x)))\Big|_{t=0}=-\int_0^{\life(x)} \varrho'(s)\psi (\phi_s(x))\,ds-\varrho(0)\psi (x),\quad x\in E.
\]
Making use of \eqref{e:Tmrhof} for $\psi _1$, we see that
\[
\int_{E}[\varrho\diamond \Tm f](x)\psi (x)m(dx)=\int_E  f(x)\left(-\int_0^{\life(x)} \varrho'(s)\psi (\phi_s(x))\,ds-\varrho(0)\psi (x)\right)\,ds\, m(dx),
\]
which completes the proof.
\end{proof}

We use the approach of \cite{arlottibanasiaklods09} to get the characterization of elements from $\mathcal{D}_{\max}$.
We recall that two elements $f_1,f_2$ of the space $L^1(E,m)$ are equal if they are equal $m$-almost everywhere, i.e. $m\{x\in E:f_1(x)\neq f_2(x)\}=0$, and we say that $f_2$ is a representative of $f_1$. The following extends the divergence-free case \cite[Theorem 3.6]{arlottibanasiaklods09}.
\begin{theorem}\label{thm:abl}  Suppose that Assumptions~\ref{a:nons} and \ref{a:dive} hold.
If $f\in \mathcal{D}_{\max}$ then there exists  a representative $f^\sharp$ of $f$ such that for $m$-a.e. $x\in E$ and any $-\nlife(x)<t_1\le t_2<\life(x)$ we have
\begin{equation}
f^{\sharp}(\phi_{t_1}(x))J_{t_1}(x)-f^{\sharp}(\phi_{t_2}(x))J_{t_2}(x)=\int_{t_1}^{t_2}\Tm f(\phi_s(x))J_s(x)\,ds.
\end{equation}
\end{theorem}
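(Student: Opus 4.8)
The plan is to extract from Lemma~\ref{l:rhof} a weak form of the differential relation $\frac{d}{dt}\bigl(f(\phi_t(x))J_t(x)\bigr)=-\Tm f(\phi_t(x))J_t(x)$ along the flow lines, to convert it into a pointwise statement on $m$-almost every flow line by means of the disintegration of $m$ supplied by Assumption~\ref{a:dive}, and finally to select the absolutely continuous representative $f^\sharp$. Throughout I write $g=\Tm f\in L^1(E,m)$.

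First I would combine the two identities \eqref{e:Tmrhof} and \eqref{e:rhoTmf} of Lemma~\ref{l:rhof}: since $\varrho\diamond g=\varrho\diamond\Tm f=\Tm[\varrho\diamond f]=-\varrho'\diamond f-\varrho(0)f$, we obtain, for every continuously differentiable $\varrho$ with $\varrho,\varrho'\in L^1(0,\infty)$, the $L^1(E,m)$-identity
\begin{equation*}
\varrho(0)f=-\varrho'\diamond f-\varrho\diamond g.
\end{equation*}
By \eqref{d:diamond} and \eqref{d:semP0} one has $[\varrho\diamond h](x)=\int_0^{\nlife(x)}\varrho(s)h(\phi_{-s}(x))J_{-s}(x)\,ds$ for $x\in E$, so this reads, for $m$-a.e.\ $x$,
\begin{equation*}
\varrho(0)f(x)=-\int_0^{\nlife(x)}\Bigl(\varrho'(s)f(\phi_{-s}(x))+\varrho(s)g(\phi_{-s}(x))\Bigr)J_{-s}(x)\,ds.
\end{equation*}

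Next I would disintegrate along the flow. Since $g\in L^1(E,m)$, formula~\eqref{e:eminus} (and its companion \eqref{e:eplus}) shows that the inner integrals $\int_0^{\life(z)}|g(\phi_s(z))|J_s(z)\,ds$ are finite for almost every boundary point, so that $u\mapsto g(\phi_u(x))J_u(x)$ is locally integrable on $(-\nlife(x),\life(x))$ for $m$-a.e.\ $x$. Fix such an $x$ and set $F_x(u)=f(\phi_u(x))J_u(x)$ and $G_x(u)=g(\phi_u(x))J_u(x)$. Substituting $u=-s$ and $\theta(u)=\varrho(-u)$ in the displayed identity turns it, as $\varrho$ ranges over a countable family of compactly supported test profiles, into the distributional statement $F_x'=-G_x$ on $(-\nlife(x),0)$; hence $F_x$ agrees a.e.\ with an absolutely continuous function there. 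To reach the whole interval I would shift the base point: for $y=\phi_v(x)$ the cocycle relation \eqref{d:jacob} gives $F_y(u)=F_x(u+v)/J_v(x)$, so absolute continuity of $F_y$ near $0$ transports to absolute continuity of $F_x$ on $(-\nlife(x),v)$, and letting $v\uparrow\life(x)$ covers all of $(-\nlife(x),\life(x))$.

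Finally I would define $f^\sharp(x)=\lim_{\varepsilon\downarrow0}\frac{1}{\varepsilon}\int_0^\varepsilon S_0(s)f(x)\,ds$, the value at $u=0$ of the absolutely continuous representative of $F_x$; measurability of the approximants yields a genuine measurable representative of $f$, and the fundamental theorem of calculus for $F_x$ delivers exactly the asserted identity. The main obstacle is precisely this passage from the $m$-a.e.\ identity on $E$ to a pointwise absolute-continuity statement on individual flow lines, together with the measurable selection of $f^\sharp$ over all of $E$: one must run the disintegration/Fubini argument carefully, in particular for flow lines of infinite forward or backward length that are not reached by \eqref{e:eminus}--\eqref{e:eplus}, and then patch the backward conclusion into a two-sided one via the group law. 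This is the Jacobian-weighted analogue of \cite[Theorem~3.6]{arlottibanasiaklods09}, whose argument I would adapt.
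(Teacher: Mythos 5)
Your proposal is correct and takes essentially the same route as the paper: the paper's proof likewise rests on Lemma~\ref{l:rhof}, establishing $\varrho_n\diamond f\to f$ in $L^1$ and $\Tm(\varrho_n\diamond f)=\varrho_n\diamond\Tm f$ for mollifiers $\varrho_n$, and then defers precisely the steps you spell out (local integrability along trajectories, the Fubini/disintegration passage to $m$-a.e.\ flow line, the base-point shift via the cocycle law, and the measurable selection of $f^\sharp$) to the argument of \cite[Theorem~3.6]{arlottibanasiaklods09}. Your only deviation --- reading the identity $\varrho(0)f=-\varrho'\diamond f-\varrho\diamond\Tm f$ tested against compactly supported profiles as the distributional equation $\frac{d}{du}\bigl(f(\phi_u(x))J_u(x)\bigr)=-\Tm f(\phi_u(x))J_u(x)$ on almost every flow line and then invoking one-dimensional absolute continuity, instead of passing to the limit in the exact trajectory identity satisfied by the mollified functions $\varrho_n\diamond f$ --- is an equivalent repackaging of the same mollification idea.
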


\begin{proof} We use  a similar type of argument to the one  in the proof of \cite[Theorem 3.6]{arlottibanasiaklods09}.  Consider, as in \cite{arlottibanasiaklods09}, a sequence $(\varrho_n)_{n\ge 1}$ of one dimensional mollifiers supported on $[0,1]$: for each $n$ the function $\varrho_n\colon \mathbb{R}\to[0,\infty)$ is of class $C^{\infty}$, $\varrho_n(s)=0$ if $s\not \in [0,1/n]$, and $\int_{0}^{1/n}\varrho_n(s)ds=1$.
Continuity of the function $s\mapsto S_0(s)f$ implies that for each $\varepsilon>0$ we can find an $s_0>0$ such that
\[
\|S_0(s)f-f\|\le \varepsilon
\]
for all $s\le s_0$, hence that
\[
\|\varrho_n\diamond f-f\|\le \int_0^{1/n}\varrho_n(s)\|S_0(s)f-f\|\,ds\le \varepsilon
\]
for all $n\ge 1/s_0$. This shows that
\begin{equation}
\lim_{n\to \infty}\|\varrho_n\diamond f-f\|=0,\quad f\in L^1(E,m).
\end{equation}
Lemma~\ref{l:rhof} with $\varrho=\varrho_n$ now gives
\begin{equation}
\Tm (\varrho_n\diamond f)=\varrho_n\diamond \Tm f,\quad  f\in \mathcal{D}_{\max},n\ge 1.
\end{equation}
The rest of the argument is similar to \cite{arlottibanasiaklods09}.
\end{proof}

Next, we can identify the generator of the semigroup $\{S_0(t)\}_{t\ge 0}$.

\begin{theorem}\label{t:pt0g} Suppose that Assumptions~\ref{a:nons} and \ref{a:dive} hold. Let $\{S_0(t)\}_{t\ge 0}$  be the substochastic semigroup from Theorem~\ref{t:semP0}. Then its generator $(\mathrm{T}_0 ,\mathcal{D}(\mathrm{T}_0))$ is given by
\[
\mathrm{T}_0f=\Tm f,\quad f\in \mathcal{D}(\mathrm{T}_0)=\{f\in \mathcal{D}_{\max}: \gamma^{-}f=0\}.
\]
\end{theorem}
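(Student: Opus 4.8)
The plan is to compare the generator $(\mathrm{T}_0,\mathcal{D}(\mathrm{T}_0))$ of $\{S_0(t)\}_{t\ge0}$ (defined abstractly) with the operator $(L,\mathcal{D}(L))$ given by $Lf=\Tm f$ on $\mathcal{D}(L)=\{f\in\mathcal{D}_{\max}:\gamma^-f=0\}$, and to show $\mathrm{T}_0=L$. Since $\mathrm{T}_0$ is a generator of a substochastic semigroup, $\lambda\in\rho(\mathrm{T}_0)$ for every $\lambda>0$, so $\lambda-\mathrm{T}_0$ is a bijection of $\mathcal{D}(\mathrm{T}_0)$ onto $L^1(E,m)$. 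Hence it suffices to prove (a) $L$ extends $\mathrm{T}_0$, i.e. $\mathcal{D}(\mathrm{T}_0)\subseteq\mathcal{D}(L)$ with $Lf=\mathrm{T}_0f$ there, and (b) $\lambda-L$ is injective for some $\lambda>0$. Indeed, given these, for $f\in\mathcal{D}(L)$ surjectivity of $\lambda-\mathrm{T}_0$ yields $h\in\mathcal{D}(\mathrm{T}_0)\subseteq\mathcal{D}(L)$ with $(\lambda-\mathrm{T}_0)h=(\lambda-L)f$; since $L$ extends $\mathrm{T}_0$ this reads $(\lambda-L)h=(\lambda-L)f$, whence $(\lambda-L)(f-h)=0$ and $f=h\in\mathcal{D}(\mathrm{T}_0)$ by (b).

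For (a) I would apply Lemma~\ref{l:rhof} with $\varrho(s)=e^{-\lambda s}$, which is $C^1$ with $\varrho,\varrho'\in L^1(0,\infty)$, $\varrho'=-\lambda\varrho$ and $\varrho(0)=1$. As $R(\lambda,\mathrm{T}_0)f=\int_0^\infty e^{-\lambda t}S_0(t)f\,dt=\varrho\diamond f$, Lemma~\ref{l:rhof} gives $R(\lambda,\mathrm{T}_0)f\in\mathcal{D}_{\max}$ and $\Tm R(\lambda,\mathrm{T}_0)f=-\varrho'\diamond f-\varrho(0)f=\lambda R(\lambda,\mathrm{T}_0)f-f=\mathrm{T}_0R(\lambda,\mathrm{T}_0)f$. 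Since every $f\in\mathcal{D}(\mathrm{T}_0)$ equals $R(\lambda,\mathrm{T}_0)g$ with $g=(\lambda-\mathrm{T}_0)f$, this shows $f\in\mathcal{D}_{\max}$ and $\Tm f=\mathrm{T}_0f$. To check $\gamma^-f=0$, I would use the cocycle identity \eqref{d:jacob} in the form $J_t(z)J_{-u}(\phi_t(z))=J_{t-u}(z)$ together with $\phi_r(z)\notin E$ for $r<0$, $z\in\nbound$, to obtain for $0<t<\life(z)$ the formula $f(\phi_t(z))J_t(z)=\int_0^t e^{-\lambda(t-r)}g(\phi_r(z))J_r(z)\,dr$; since $r\mapsto g(\phi_r(z))J_r(z)$ is integrable on $(0,\life(z))$ for $m^-$-a.e. $z$ by \eqref{e:eminus}, the right-hand side tends to $0$ as $t\downarrow0$, so $\gamma^-f=0$.

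For (b) I would show that any $w\in\mathcal{D}_{\max}$ with $\gamma^-w=0$ and $\Tm w=\lambda w$ ($\lambda>0$) vanishes. By Theorem~\ref{thm:abl} the map $t\mapsto w^\sharp(\phi_t(x))J_t(x)$ is absolutely continuous along each trajectory with derivative $\Tm w(\phi_t(x))J_t(x)=\lambda w^\sharp(\phi_t(x))J_t(x)$, so $w^\sharp(\phi_t(x))J_t(x)=w^\sharp(x)e^{-\lambda t}$ for $t\in(-\nlife(x),\life(x))$; note that $J_t(\phi_{-t}(x))J_{-t}(x)=J_0(x)=1$ forces $J>0$. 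On $\{0<\nlife<\infty\}$, parametrising by the incoming boundary and letting $t\downarrow0$ identifies the constant with $\gamma^-w(z)=0$, so $w=0$ there by \eqref{e:eminus}. For the set $\{\nlife=\infty\}$ I would substitute the trajectory relation into $S_0$ to get $S_0(t)w(x)=\mathbf{1}_{\{t<\nlife(x)\}}e^{\lambda t}w^\sharp(x)$; the contraction bound $\|S_0(t)w\|\le\|w\|$ then reads $e^{\lambda t}\int_{\{\nlife>t\}}|w|\,dm\le\|w\|$, and letting $t\to\infty$ forces $\int_{\{\nlife=\infty\}}|w|\,dm=0$. Together with the $m$-null set $\{\nlife=0\}\subseteq\nbound$, this gives $w=0$, so $\lambda-L$ is injective and the identification $\mathrm{T}_0=L$ follows.

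The main obstacle is step (b): the restriction imposed only by $\gamma^-=0$ still permits kernel elements of $\lambda-\Tm$ supported on trajectories that never reach the incoming boundary, and excluding these cannot be done with the boundary condition alone—it requires the quantitative $L^1$-contractivity of $S_0$. Consequently the regions $\{\nlife<\infty\}$ and $\{\nlife=\infty\}$ must be handled by genuinely different arguments, the trace condition for the former and the growth-versus-contraction estimate for the latter.
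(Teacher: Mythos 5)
Your proof is correct, and its first half is essentially the paper's argument verbatim: the paper also writes $f=\varrho\diamond g$ with $\varrho(s)=e^{-\lambda s}$, invokes Lemma~\ref{l:rhof} to get $f\in\mathcal{D}_{\max}$ and $\Tm f=\lambda f-g=\mathrm{T}_0 f$, and verifies $\gamma^{-}f=0$ from the same Duhamel formula $f(\phi_s(z))J_s(z)=\int_0^s e^{-\lambda(s-\tau)}g(\phi_\tau(z))J_\tau(z)\,d\tau$ along trajectories emanating from $\nbound$. The genuine difference is in the converse inclusion $\mathcal{D}_{\max}\cap\Ker(\gamma^{-})\subseteq\mathcal{D}(\mathrm{T}_0)$: the paper does not prove this at all, but invokes Theorem~\ref{thm:abl} and defers to Step~3 of the proof of Theorem~4.1 in \cite{arlottibanasiaklods09}, whereas you supply a complete, self-contained argument. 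Your reduction to injectivity of $\lambda-\Tm$ on $\Ker(\gamma^{-})$ is legitimate (since $\lambda-\mathrm{T}_0$ is already a bijection onto $L^1(E,m)$, an injective extension must coincide with it), and your treatment of the kernel — integrating the characteristic ODE $u'=-\lambda u$ furnished by Theorem~\ref{thm:abl}, identifying the constant with $\gamma^{-}w(z)=0$ on $\{0<\nlife<\infty\}$ via the disintegration \eqref{e:eminus}, and playing the forced growth $e^{\lambda t}$ against the contraction bound $\|S_0(t)w\|\le\|w\|$ on $\{\nlife=\infty\}$ — is the same mechanism as the cited step of \cite{arlottibanasiaklods09} (integration along characteristics with zero incoming data), but packaged differently: the citation buys the paper brevity, while your resolvent-injectivity reduction plus the global contraction-versus-growth estimate makes the delicate case of infinite backward lifetime particularly clean and keeps the whole proof inside the paper's own toolkit.

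Two pedantic points, neither a gap. First, the pointwise identity $S_0(t)w(x)=\mathbf{1}_{\{t<\nlife(x)\}}e^{\lambda t}w^{\sharp}(x)$ can fail on trajectories that leave $E$ and later re-enter; but your argument only uses $|S_0(t)w|$ restricted to $\{\nlife>t\}$, where $\phi_{-t}(x)\in E^0$ and the identity does hold, so the inequality $e^{\lambda t}\int_{\{\nlife>t\}}|w^{\sharp}|\,dm\le\|w\|$ — which is all you need before letting $t\to\infty$ — is sound. Second, $\{\nlife=0\}$ is not literally contained in $\nbound$: a point $x\in E^0$ with $\nlife(x)=0$ is a limit of points of $\nbound$ and hence lies in $\partial E$; but both $\nbound$ and $\partial E$ are $m$-null by Assumption~\ref{a:nons}, so the set is negligible exactly as you need.
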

\begin{proof}
First, we show that the operator $(\Tm,\mathcal{D}_{\max})$ is an extension of the generator $(\mathrm{T}_0,\mathcal{D}(\mathrm{T}_0))$ of the semigroup $\{S_0(t)\}_{t\ge 0}$. To this end let $f\in \mathcal{D}(\mathrm{T}_0)$, $\lambda>0$,  and $g=\lambda f-\mathrm{T}_0f$.
Since
\[
f(x)=\int_0^\infty e^{-\lambda t}S_0(t)g(x)dt
\]
for $m$-a.e. $x\in E$, we have $f=\varrho\diamond g $ with $\varrho(s)=e^{-\lambda s}$, $s\in \mathbb{R}_+$. Lemma~\ref{l:rhof} now implies that $f\in \mathcal{D}_{\max}$ and $\Tm f=-\varrho'\diamond g-\varrho(0)g=\lambda f-g=\mathrm{T}_0f$.

To show that $\gamma^{-}f=0$ observe that for $z\in \nbound $ and  $0<s<\life(z)$ we have
\[
f(\phi_{s}(z))J_s(z)=\int_0^{s}e^{-\int_{\tau}^{s}\lambda dr}g(\phi_{\tau}(z))J_{\tau}(z)d\tau \]
and the limit of the right hand side is zero as $s\to 0$.
To prove that $\mathcal{D}_{\max}\cap\Ker(\gamma^{-})\subset \mathcal{D}(\mathrm{T}_0)$ we apply Theorem~\ref{thm:abl} and we argue as in Step 3 of the proof of \cite[Theorem 4.1]{arlottibanasiaklods09}.
\end{proof}

\begin{proof}[Proof of Theorem~\ref{l:Green0}]
Let $f\in \mathcal{D}$  and  $g=(\lambda-\Tm)f$. We define $f_1=R(\lambda,\mathrm{T}_0)g$ and $f_2=f-f_1$.
We see that $\Tm f_2=\lambda f_2$ and $\gamma^{-}f=\gamma^{-}f_2$.
It follows from Lemma~\ref{r:psil} with $\vartheta\equiv 0$ and equation \eqref{e:Grb} that $f_2=\Psi(\lambda)\gamma^{-}f\in \mathcal{D}(\gamma^{\pm})$  and
\[
\int_{E}\lambda f_2(x)m(dx)=\int_{\nbound }\gamma^{-}f_2(z)m^{-}(dz)-\int_{\pbound }\gamma^{+}f_2(z)m^{+}(dz)
\]
We have $\gamma^{-}f_1=0$, $g=(\lambda-\mathrm{T}_0)f_1$, and Lemma~\ref{r:gamA0} with $\vartheta\equiv 0$ implies
\[
\int_{E}\lambda f_1(x)m(dx)=\int_{E}g(x)m(dx)-\int_{\pbound }\gamma^{+}f_1(z)m^{+}(dz).
\]
Thus $\gamma^{+}f_1\in L^1(\pbound ,m^{+})$. Since $g=\lambda f -\Tm f$,  equality \eqref{e:Afin0} holds, by linearity.
\end{proof}

\begin{proof}[Proof of Theorem~\ref{t:gs0t}]
Theorem~\ref{t:semP0} and Assumption~\ref{a:varp} imply that $\{S(t)\}_{t\ge 0}$ is a substochastic semigroup on $L^1(E,m)$ and
that \eqref{eq:T0St} holds.
First we  show that the operator $(A,\mathcal{D})$ is an extension of the generator $(A_0,\mathcal{D}(A_0))$ of the semigroup $\{S(t)\}_{t\ge 0}$.
Let $f\in \mathcal{D}(A_0)$, $\lambda>0$ and   $g=\lambda f-A_0f$. We have $f=R(\lambda,A_0)g$ and  $\vartheta f\in L^1(E,m)$, by Lemma~\ref{r:gamA0}.
Arguing as  in the proof of Lemmas~\ref{r:gamA0}  and~\ref{l:rhof} it is easily seen that for each $\psi \in \mathfrak{N}$ we have
\[
\int_E (\lambda f(x)+\vartheta(x) f(x)-g(x))\psi (x)m(dx)=\int_E f(x)\frac{d(\psi  \circ \phi_t)}{dt}\Big|_{t=0}(x)m(dx).
\]
Thus we get $f\in \mathcal{D}_{\max}$ and $\Tm f=\lambda f +\vartheta  f -g$ showing that $A_0f=\Tm f-\vartheta f$.
Finally note that for $z\in \nbound $ and $s<\life(z)$ we have
\[
f(\phi_{s}(z))J_s(z)=\int_0^{s}e^{-\int_{\tau}^{s}(\lambda+\vartheta(\phi_{r}(z)))dr}g(\phi_{\tau}(z))J_{\tau}(z)d\tau, \]
implying that
$\gamma^{-}f=0$.
Consequently, we obtain
\[
A_0f=Af=\mathrm{T}_0f-\vartheta f,\quad f\in \mathcal{D}(\mathrm{T}_0)\cap L^1_{\vartheta},
\]
where $ L^1_{\vartheta}=\{f\in L^1(E,m): \vartheta f\in L^1(E,m)\}.$
The operator $(A,\mathcal{D}(\mathrm{T}_0)\cap L^1_{\vartheta})$ is dissipative as a sum of two dissipative operators.
Since $(A_0,\mathcal{D}(A_0))$ is the generator of a substochastic semigroup, we conclude that $(A_0,\mathcal{D}(A_0))=(A,\mathcal{D}(\mathrm{T}_0)\cap L^1_{\vartheta})$.
\end{proof}

%
%

\begin{thebibliography}{10}
\expandafter\ifx\csname url\endcsname\relax
  \def\url#1{\texttt{#1}}\fi
\expandafter\ifx\csname urlprefix\endcsname\relax\def\urlprefix{URL }\fi

\bibitem{adler2014perturbation}
M.~Adler, M.~Bombieri, K.-J. Engel, On perturbations of generators of
  {$C_0$}-semigroups, Abstr. Appl. Anal. (2014) Art. ID 213020, 13.
\newline\urlprefix\url{https://doi.org/10.1155/2014/213020}

\bibitem{adler2017perturbation}
M.~Adler, M.~Bombieri, K.-J. Engel, Perturbation of analytic semigroups and
  applications to partial differential equations, J. Evol. Equ. 17~(4) (2017)
  1183--1208.
\newline\urlprefix\url{https://doi.org/10.1007/s00028-016-0377-8}

\bibitem{arendt87}
W.~Arendt, Resolvent positive operators, Proc. London Math. Soc. (3) 54~(2)
  (1987) 321--349.

\bibitem{arlottibanasiaklods07}
L.~Arlotti, J.~Banasiak, B.~Lods, On transport equations driven by a
  non-divergence-free force field, Math. Methods Appl. Sci. 30~(17) (2007)
  2155--2177.
\newline\urlprefix\url{https://doi.org/10.1002/mma.817}

\bibitem{arlottibanasiaklods09}
L.~Arlotti, J.~Banasiak, B.~Lods, A new approach to transport equations
  associated to a regular field: trace results and well-posedness, Mediterr. J.
  Math. 6~(4) (2009) 367--402.
\newline\urlprefix\url{http://dx.doi.org/10.1007/s00009-009-0022-7}

\bibitem{arlottilods05}
L.~Arlotti, B.~Lods, Substochastic semigroups for transport equations with
  conservative boundary conditions, J. Evol. Equ. 5~(4) (2005) 485--508.
\newline\urlprefix\url{http://dx.doi.org/10.1007/s00028-005-0209-8}

\bibitem{banasiakarlotti06}
J.~Banasiak, L.~Arlotti, Perturbations of positive semigroups with
  applications, Springer Monographs in Mathematics, Springer-Verlag London
  Ltd., London, 2006.

\bibitem{beals87}
R.~Beals, V.~Protopopescu, Abstract time-dependent transport equations, J.
  Math. Anal. Appl. 121~(2) (1987) 370--405.
\newline\urlprefix\url{https://doi.org/10.1016/0022-247X(87)90252-6}

\bibitem{benaim12}
M.~Bena\"{i}m, S.~Le~Borgne, F.~Malrieu, P.-A. Zitt, Qualitative properties of
  certain piecewise deterministic {M}arkov processes, Ann. Inst. Henri
  Poincar\'e Probab. Stat. 51~(3) (2015) 1040--1075.
\newline\urlprefix\url{https://doi.org/10.1214/14-AIHP619}

\bibitem{biedrzyckatyran}
W.~Biedrzycka, M.~Tyran-Kami{\'n}ska, Existence of invariant densities for
  semiflows with jumps, J. Math. Anal. Appl. 435~(1) (2016) 61--84.
\newline\urlprefix\url{http://dx.doi.org/10.1016/j.jmaa.2015.10.019}

\bibitem{cercignani88}
C.~Cercignani, The {B}oltzmann equation and its applications, vol.~67 of
  Applied Mathematical Sciences, Springer-Verlag, New York, 1988.
\newline\urlprefix\url{https://doi.org/10.1007/978-1-4612-1039-9}

\bibitem{cloez18}
B.~Cloez, R.~Dessalles, A.~Genadot, F.~Malrieu, A.~Marguet, R.~Yvinec,
  Probabilistic and piecewise deterministic models in biology, in: Journ\'{e}es
  {MAS} 2016 de la {SMAI}---{P}h\'{e}nom\`enes complexes et
  h\'{e}t\'{e}rog\`enes, vol.~60 of ESAIM Proc. Surveys, EDP Sci., Les Ulis,
  2018, pp. 225--245.

\bibitem{hairer15}
B.~Cloez, M.~Hairer, Exponential ergodicity for {M}arkov processes with random
  switching, Bernoulli 21~(1) (2015) 505--536.
\newline\urlprefix\url{https://doi.org/10.3150/13-BEJ577}

\bibitem{costa90}
O.~L.~V. Costa, Stationary distributions for piecewise-deterministic {M}arkov
  processes, J. Appl. Probab. 27~(1) (1990) 60--73.
\newline\urlprefix\url{https://doi.org/10.2307/3214595}

\bibitem{costadufour08}
O.~L.~V. Costa, F.~Dufour, Stability and ergodicity of piecewise deterministic
  {M}arkov processes, SIAM J. Control Optim. 47~(2) (2008) 1053--1077.
\newline\urlprefix\url{https://doi.org/10.1137/060670109}

\bibitem{davis84}
M.~H.~A. Davis, Piecewise-deterministic {M}arkov processes: a general class of
  nondiffusion stochastic models, J. Roy. Statist. Soc. Ser. B 46~(3) (1984)
  353--388.

\bibitem{davis93}
M.~H.~A. Davis, Markov models and optimization, vol.~49 of Monographs on
  Statistics and Applied Probability, Chapman \& Hall, London, 1993.

\bibitem{engelnagel00}
K.-J. Engel, R.~Nagel, One-parameter semigroups for linear evolution equations,
  vol. 194 of Graduate Texts in Mathematics, Springer-Verlag, New York, 2000.

\bibitem{gackilasota90}
H.~Gacki, A.~Lasota, Markov operators defined by {V}olterra type integrals with
  advanced argument, Ann. Polon. Math. 51 (1990) 155--166.

\bibitem{greenberg87}
W.~Greenberg, C.~van~der Mee, V.~Protopopescu, Boundary value problems in
  abstract kinetic theory, vol.~23 of Operator Theory: Advances and
  Applications, Birkh\"{a}user Verlag, Basel, 1987.
\newline\urlprefix\url{https://doi.org/10.1007/978-3-0348-5478-8}

\bibitem{greiner}
G.~Greiner, Perturbing the boundary conditions of a generator, Houston J. Math.
  13~(2) (1987) 213--229.

\bibitem{GMTKpositive}
P.~Gwi\.zd\.z, M.~Tyran-Kami\'nska, Positive semigroups and perturbations of
  boundary conditions, Preprint arXiv:1807.06992.
\newline\urlprefix\url{https://arxiv.org/abs/1807.06992}

\bibitem{rhandi15}
S.~Hadd, R.~Manzo, A.~Rhandi, Unbounded perturbations of the generator domain,
  Discrete Contin. Dyn. Syst. A 35~(2) (2015) 703--723.
\newline\urlprefix\url{https://doi.org/10.3934/dcds.2015.35.703}

\bibitem{kato54}
T.~Kato, On the semi-groups generated by {K}olmogoroff's differential
  equations, J. Math. Soc. Japan 6 (1954) 1--15.
\newline\urlprefix\url{https://doi.org/10.2969/jmsj/00610001}

\bibitem{almcmbk94}
A.~Lasota, M.~C. Mackey, Chaos, fractals, and noise, vol.~97 of Applied
  Mathematical Sciences, Springer-Verlag, New York, 1994.

\bibitem{al-mcm-jt-92}
A.~Lasota, M.~C. Mackey, J.~Tyrcha, The statistical dynamics of recurrent
  biological events, J. Math. Biol. 30~(8) (1992) 775--800.
\newline\urlprefix\url{https://doi.org/10.1007/BF00176455}

\bibitem{liliu17}
D.~Li, S.~Liu, J.~Cui, Threshold dynamics and ergodicity of an {SIRS} epidemic
  model with {M}arkovian switching, J. Differential Equations 263~(12) (2017)
  8873--8915.
\newline\urlprefix\url{https://doi.org/10.1016/j.jde.2017.08.066}

\bibitem{lods05}
B.~Lods, Semigroup generation properties of streaming operators with
  noncontractive boundary conditions, Math. Comput. Modelling 42~(13) (2005)
  1441--1462.
\newline\urlprefix\url{https://doi.org/10.1016/j.mcm.2004.12.007}

\bibitem{lods2018invariant}
B.~Lods, M.~Mokhtar-Kharroubi, R.~Rudnicki, Invariant density \& time
  asymptotics for collisionless kinetic equations with partly diffuse boundary
  operators, arXiv preprint arXiv:1812.05397.

\bibitem{palmowski13}
A.~L\"{o}pker, Z.~Palmowski, On time reversal of piecewise deterministic
  {M}arkov processes, Electron. J. Probab. 18 (2013) no. 13, 29.
\newline\urlprefix\url{https://doi.org/10.1214/EJP.v18-1958}

\bibitem{mustapha17}
M.~Mokhtar-Kharroubi, R.~Rudnicki, On asymptotic stability and sweeping of
  collisionless kinetic equations, Acta Appl. Math. 147 (2017) 19--38.
\newline\urlprefix\url{https://doi.org/10.1007/s10440-016-0066-1}

\bibitem{pichorrudnicki16}
K.~Pich\'{o}r, R.~Rudnicki, Asymptotic decomposition of substochastic operators
  and semigroups, J. Math. Anal. Appl. 436~(1) (2016) 305--321.
\newline\urlprefix\url{https://doi.org/10.1016/j.jmaa.2015.12.009}

\bibitem{rudnickityran15}
R.~Rudnicki, M.~Tyran-Kami\'{n}ska, Piecewise deterministic {M}arkov processes
  in biological models, in: Semigroups of operators---theory and applications,
  vol. 113 of Springer Proc. Math. Stat., Springer, Cham, 2015, pp. 235--255.
\newline\urlprefix\url{https://doi.org/10.1007/978-3-319-12145-1_15}

\bibitem{rudnickityran17}
R.~Rudnicki, M.~Tyran-Kami\'nska, Piecewise {D}eterministic {P}rocesses in
  {B}iological {M}odels, Springer Briefs in Applied Sciences and Technology,
  Springer, Cham, 2017, {S}pringer Briefs in Mathematical Methods.
\newline\urlprefix\url{http://dx.doi.org/10.1007/978-3-319-61295-9}

\bibitem{tyran09}
M.~Tyran-Kami\'{n}ska, Substochastic semigroups and densities of piecewise
  deterministic {M}arkov processes, J. Math. Anal. Appl. 357~(2) (2009)
  385--402.
\newline\urlprefix\url{https://doi.org/10.1016/j.jmaa.2009.04.033}

\bibitem{Tyrcha}
J.~Tyrcha, Asymptotic stability in a generalized probabilistic/deterministic
  model of the cell cycle, J. Math. Biol. 26~(4) (1988) 465--475.
\newline\urlprefix\url{https://doi.org/10.1007/BF00276374}

\bibitem{TH-cc}
J.~J. Tyson, K.~B. Hannsgen, Cell growth and division: a
  deterministic/probabilistic model of the cell cycle, J. Math. Biol. 23~(2)
  (1986) 231--246.
\newline\urlprefix\url{https://doi.org/10.1007/BF00276959}

\bibitem{villani02}
C.~Villani, A review of mathematical topics in collisional kinetic theory, in:
  Handbook of mathematical fluid dynamics, {V}ol. {I}, North-Holland,
  Amsterdam, 2002, pp. 71--305.
\newline\urlprefix\url{https://doi.org/10.1016/S1874-5792(02)80004-0}

\bibitem{voigt80}
J.~Voigt, Functional analytic treatment of the initial boundary value problem
  for collisionless gases, Habilitationsschrift, M{\"u}nchen, 1981.

\bibitem{voigt87}
J.~Voigt, On substochastic {$C\sb 0$}-semigroups and their generators,
  Transport Theory Statist. Phys. 16~(4-6) (1987) 453--466.
\newline\urlprefix\url{https://doi.org/10.1080/00411458708204302}

\bibitem{voigt89}
J.~Voigt, On resolvent positive operators and positive {$C_0$}-semigroups on
  {$AL$}-spaces, Semigroup Forum 38~(2) (1989) 263--266.
\newline\urlprefix\url{https://doi.org/10.1007/BF02573236}

\bibitem{yosida78}
K.~Yosida, Functional analysis, 6th ed., Springer-Verlag, Berlin, 1980.

\end{thebibliography}
\end{document}